\documentclass[11pt, a4paper, reqno]{amsart}%{article}
\usepackage{mathrsfs}
\usepackage{amsfonts}
\usepackage{amsmath}
\usepackage{graphicx}
\usepackage{amssymb}
\usepackage{amsthm}
\usepackage{color}
\usepackage{hyperref}
\usepackage{vmargin}

\long\def\symbolfootnote[#1]#2{\begingroup%
\def\thefootnote{\fnsymbol{footnote}}\footnote[#1]{#2}\endgroup}

\setmarginsrb{20mm}{20mm}{20mm}{20mm}{10mm}{10mm}{10mm}{10mm}

\newtheoremstyle{remark}
  {}{}{}{}{\bfseries}{.}{.5em}{{\thmname{#1 }}{\thmnumber{#2}}{\thmnote{ (#3)}}}

\DeclareMathOperator\dist{dist}

\RequirePackage{amsthm}
\newtheorem*{theorem*}{Theorem}
\newtheorem{theorem}{Theorem}[section]
\newtheorem{lem}[theorem]{Lemma}
\newtheorem{cor}[theorem]{Corollary}
\newtheorem{prop}[theorem]{Proposition}

\theoremstyle{definition}

\theoremstyle{remark}
\newtheorem{rem}[theorem]{Remark}
\newtheorem{ex}{Example}

\def\div{{\rm div}}
\def\vint{\mathop{\mathchoice%
          {\setbox0\hbox{$\displaystyle\intop$}\kern 0.22\wd0%
           \vcenter{\hrule width 0.6\wd0}\kern -0.82\wd0}%
          {\setbox0\hbox{$\textstyle\intop$}\kern 0.2\wd0%
           \vcenter{\hrule width 0.6\wd0}\kern -0.8\wd0}%
          {\setbox0\hbox{$\scriptstyle\intop$}\kern 0.2\wd0%
           \vcenter{\hrule width 0.6\wd0}\kern -0.8\wd0}%
          {\setbox0\hbox{$\scriptscriptstyle\intop$}\kern 0.2\wd0%
           \vcenter{\hrule width 0.6\wd0}\kern -0.8\wd0}}%
          \mathopen{}\int}

\newcommand{\Om}{\Omega}
\newcommand{\C}{\mathbb{C}}
\newcommand{\R}{\mathbb{R}}

\newcommand{\ep}{\epsilon}
\newcommand{\p}{{$p\mspace{1mu}$}}

\newcommand{\Ric}{{\rm Ric}}

\newcommand{\vp}{\varphi}
\newcommand{\si}{\sigma}
\newcommand{\ud}{\mathrm {d}}

\definecolor{blau}{rgb}{0.1,0.0,0.9}
\definecolor{violet}{rgb}{0.54, 0.17, 0.89}
\newcommand{\blue}{\color{blau}}

\newcommand{\kom}[1]{}
\renewcommand{\kom}[1]{{\bf \blue /#1/}}

\newcounter{komcounter}
\numberwithin{komcounter}{section}

\title[]{Isoperimetric inequalities and geometry of level curves of harmonic functions on smooth and singular surfaces%: length and curvature estimates, log-convexity properties and isoperimetric inequalities
}

\author[]{Tomasz Adamowicz{\small$^1$}}
\address{T.A.: Institute of Mathematics, Polish Academy of Sciences,
\'Sniadeckich 8, Warsaw, 00-656, Poland\/}
\email{tadamowi@impan.pl}
%
%\\
%\it \small Institute of Mathematics, Polish Academy of Sciences\\
%\it \small ul. \'Sniadeckich 8, 00-656 Warsaw, Poland\/{\rm ;}
%\it \small T.Adamowicz@impan.pl
%\\
\author[]{Giona Veronelli}
\address{G.V.: Dipartimento di Matematica e Applicazioni, Università di Milano Bicocca, via R. Cozzi 55, 20126 Milano, Italy}
\email{giona.veronelli@unimib.it}

\begin{document}

%\date{\today}

\footnotetext[1]{T. Adamowicz was supported by a grant of National Science Center, Poland (NCN),
 UMO-2017/25/B/ST1/01955.}
%\footnotetext[2]{G. Veronelli was supported by }
\begin{abstract}
 We investigate the logarithmic convexity of the length of the level curves for harmonic functions on surfaces and related isoperimetric type inequalities. The results deal with smooth surfaces, as well as with singular Alexandrov surfaces (also called surfaces with bounded integral curvature), a class which includes for instance surfaces with conical singularities and surfaces of CAT(0) type.  Moreover, we study the geodesic curvature of the level curves and of the steepest descent for harmonic functions on surfaces with non-necessarily constant Gaussian curvature $K$. Such geodesic curvature functions turn out to satisfy certain Laplace-type equations and inequalities, from which we infer various maximum and minimum principles.
  
 The results are complemented by a number of growth estimates for the derivatives $L'$ and $L''$ of the length of the level curve function $L$, as well as by examples illustrating the presentation.
 
  Our work generalizes some results due to Alessandrini, Longinetti, Talenti, Ma--Zhang and Wang--Wang.
\newline
\newline \emph{Keywords}: bounded integral curvature, convexity, curvature estimate, Gauss curvature, harmonic function, isoperimetric inequality, level curve, manifold, ring domain, surface.
\newline
\newline
\emph{Mathematics Subject Classification (2010):} Primary: 35R01; Secondary: 58E20, 31C12, 53C21, 53C45.
\end{abstract}

\maketitle

\section{Introduction}

 The geometry of level sets has been a vital and fruitful topic of investigations involving variety of function and space settings. In particular, in the Euclidean spaces the convexity properties and curvature estimates of level sets of harmonic functions and their generalizations (such as $p$-harmonic functions and mappings and second order elliptic PDEs) have been studied by many researchers, for instance by Alessandrini~\cite{al, al2}, Caffarelli--Spruck~\cite{cs}, Chang--Ma--Yang~\cite{cmy}, Gabriel~\cite{gab}, Jost--Ma--Ou~\cite{jmo}, Kawohl~\cite{kaw}, Laurence~\cite{la}, Lewis~\cite{lew3}, Longinetti~\cite{long}, Ma--Ou--Zhang~\cite{moz}, Talenti~\cite{tal}; see also~\cite{ad}. Similar studies have been conducted for harmonic functions in space forms by Ma--Zhang~\cite{mz} and on Riemannian surfaces with constant Gaussian curvature by Wang--Wang~\cite{ww}.
 
 The main goal of this note is to extend number of the aforementioned results to the setting of Riemannian $2$-manifolds with not necessarily constant curvature, including the surfaces of bounded integral curvature (Alexandrov surfaces). The latter one is a class of singular spaces which includes the polyhedral surfaces, the surfaces with conical singularities as well as all the topological metric surfaces with (one sided) bounds on the curvature in the sense of Alexandrov, such as $CBB(k)$ and $CAT(k)$, or $RCD(k,2)$ surfaces. This topic has been attracting an increasing interest in the recent decades, for instance due to its connections with the study of Gromov--Hausdorff limits of manifolds with bounded curvature. 
 
% \kom{Such surfaces are examples of singular spaces which due to the developments in analysis on metric spaces, in the studies of s, also in $(R)CD$-, Alexandrov- and $CAT(0)$-spaces, see Section 3 for more details.} 

Moreover, we present a number of growth estimates and maximum (minimum) principles for the geodesic curvature of level sets and the curvature of the steepest descent for harmonic functions.
 
Let us now present the main results and the organization of the paper. In {\bf Section 2} we consider the harmonic Dirichlet problem on an annular domain of a Riemannian surface $M^2$:
\begin{equation}\label{DP0}\tag{DP}
	\begin{cases}
		\Delta_{M} u=0 & \hbox{in } \Om,\\
		u|_{\Gamma_1}=t_1, & u|_{\Gamma_2}=t_2,
	\end{cases}
\end{equation}
assuming constant data $t_1$ and $t_2$ on the two boundary components $\Gamma_1$ and $\Gamma_2$, respectively. We show that in the smooth setting the \emph{isoperimetric inequality}
\begin{equation}\label{convex intro}
(\ln L(t))''\geq 0,\quad t_1\leq t \leq t_2,
\end{equation}
for the length of the level curves $L(t)$ characterizes the surfaces with nonpositive Gaussian curvature. Moreover, the equality in \eqref{convex intro} forces the domain to be a flat round annulus; see Theorems~\ref{thm-main1} and~\ref{thm-main2}.  The fact that the key differential inequalities studied here become equalities depending on the geometry of domains, motivates the name isoperimetric inequality in the title of our work. Indeed, we follow the terminology in~\cite{al2, long, la}, where such results for~\eqref{DP} are shown in the Euclidean spaces; see also~\cite{lew3} for importance of the capacitary estimates related to the problem~\eqref{DP}. The generalization to non-flat surfaces requires obtaining integral formulas for the derivatives $L'$ and $L''$: see Lemma~\ref{lem:  L''} whose proof relies on the coarea formula, the Bochner and the Kato identities. Our discussion is complemented by  
refined lower estimates on $(\ln L)''$ for surfaces with bounds for the Gaussian curvature (e.g. surfaces with pinched curvatures), see Propositions 2.9 and~\ref{thm-main2-2}. We illustrate the presentation by analyzing some level curves in a hyperbolic surface; see Example 1. An alternative more analytic proof of Theorem \ref{thm-main1}, based on a conformal approach, is presented in Appendix A. We also remark here that inequalities similar to~\eqref{convex intro} appear in the context of the Hadamard's three-circles theorems (see Remark \ref{rmk:AG}) and studies of the K\"ahler manifolds, see e.g.~\cite[Theorems 4.2 and 2.2]{mac}.

 Furthermore, for non-smooth \emph{surfaces with non-negative bounded integral curvature} we show that the function $t\to \ln L(t)$ is convex (Theorem~\ref{thm-reshetnyak} in {\bf Section 3}). Since $L(t)$ is no more a smooth function in this setting, such a result requires a different approach based on the measure-theoretic results due to Reshetnyak~\cite{re} and Troyanov~\cite{tr-AnnIHP}. As a byproduct of our studies we obtain that in the special case of zero curvature, Theorem~\ref{thm-reshetnyak} allows to study also annuli with non-smooth boundary components (for example von Koch snowflakes) and thus the result is new also in the Euclidean spaces; see Remark~\ref{r: Jordan bdy}. Among the surfaces with bounded integral curvature to which our result applies, let us point out in particular the important class of the \emph{surfaces with conical singularities} whose angles at the vertices are greater than $2\pi$ (i.e. of non-positive curvature); see Section 3. 
 
The aim of {\bf Section 4} is to extend some results in~\cite{al, tal}, obtained for domains in $\R^n$, to the setting of domains in $2$-manifolds and for harmonic functions with no critical points in the underlying domain (see the discussion following Theorem~\ref{thm-pdes} for the feasibility of this assumption). Moreover, we generalize certain results proved in~\cite{mz, ww} for surfaces with constant Gaussian curvature, by allowing the curvature to vary.
Specifically, let $u$ be a harmonic function in a domain $\Om$ in a surface $M^2$ with Gaussian curvature $K=K(x)$ for $x\in M^2$, such that $u$ has no critical points in $\Om$. Then, Theorem~\ref{thm-pdes} says that the geodesic curvature $k$ of the level curves of $u$ satisfies:
 \begin{align*}
 &\Delta \left(\frac{k}{|\nabla u|}\right)+2K\frac{k}{|\nabla u|}=\frac{\langle \nabla K,\nabla u\rangle}{|\nabla u|^2}, \\
 &-\Delta \ln |k| \geq K-\frac{1}{|k|}\langle \nabla K,\frac{\nabla u}{|\nabla u|}\rangle,\quad k\not=0.
\end{align*}
Similar differential (in)equalities hold true replacing $k$ with the curvature $h$ of the steepest descent of $u$. Namely:
 \begin{align*}
 	& \Delta \left(\frac{h}{|\nabla u|}\right)+2K\frac{h}{|\nabla u|}=-\frac{\langle \nabla K,\star\nabla u\rangle}{|\nabla u|^2}, 
  \\
 	&-\Delta \ln |h| \geq K+\frac{1}{|h|}\langle \nabla K,\frac{\star\nabla u}{|\nabla u|}\rangle,\quad h\not=0,
 \end{align*}
where $\star$ stands for the Hodge star operator (so that with respect to a local orthonormal frame one has $\star \nabla u=(u_2,-u_1)$). Our Theorem~\ref{thm-pdes} generalizes Theorem 1.3 in~\cite{ww} and Theorem 3 in~\cite{tal} to the non-constant curvature setting. Furthermore, we study more types of curvatures of level sets than in~\cite{ww}, see the discussion before the statement of Theorem~\ref{thm-pdes} for a detailed presentation of the novelties obtained in Section 4. The corollaries of Theorem~\ref{thm-pdes} encompass weak and strong maximum and minimum principles for the curvatures of the level curves (Corollaries~\ref{c: k}\,--\,\ref{c:cor3.7}). The proof of Theorem~\ref{thm-pdes} is presented in Appendix B.
%Finally, in Lemma~\ref{lem-rr} we provide some key commutation identities involving the Ricci tensor, used in the proof of Theorem~\ref{thm-pdes}. The lemma is formulated and proven for $n$-manifolds and may be of independent interest to researchers studying non-constant curvature results.
\vspace{0.2cm}

{\bf Acknowledgements.} Part of the work was conducted during the Simons semester in \emph{Geometry and analysis in function and mapping theory on Euclidean and metric measure spaces} at IMPAN in fall 2019 partially supported by the grant 346300 for IMPAN from the Simons Foundation and the matching 2015-2019 Polish MNiSW fund.
The second author is member of INdAM-GNAMPA. The authors would like to thank Luciano Mari and Tommaso Pacini for their comments about the manuscript and for pointing some literature.
 \section{Isoperimetric inequality}

 The goal of this section is to show a counterpart of Alessandrini's isoperimetric inequality result~\cite[Theorem 1.1]{al2} for harmonic functions on two-dimensional Riemannian manifolds. However, we will formulate the main problem for all dimensions $n\geq 2$, as some of our results below can be applied in the general case of smooth Riemannian manifolds.
 
 Let $(M^n,g)$ be an $n$-dimensional Riemannian manifold with the Ricci curvature bounded from below: $\Ric\geq c$, for some fixed $c\in \R$. Moreover, let $\Om_1, \Om_2$ with $\Om_1\Subset \Om_2\subset M^n$ be two simply connected domains with boundaries denoted by $\Gamma_1$ and $\Gamma_2$, respectively. In what follows we will either assume that $\Gamma_1$ and $\Gamma_2$ are Jordan curves (mostly for $2$-dimensional results) or require
them to be $C^{1,\alpha}$ (mostly for $n>2$), see Remark~\ref{rem-c1a-reg} for a further discussion. We will specify the boundary regularity assumptions when stating the results. We define a topological annulus in $M^n$ (i.e. a ring domain, sometimes also called in the literature a $2$-connected domain when $n=2$) by
 \begin{equation}\label{def-top-ring}
 \Om:=\Om_2\setminus \overline{\Om_1}.
 \end{equation}
 Furthermore, let $t_1,t_2\in \R$ be such that $t_1<t_2$ and let us consider a continuous up to the boundary solution of the following Dirichlet problem in $\Om$ for the Laplace--Beltrami harmonic operator $\Delta_M$ on $M^n$:
 \begin{equation}\label{DP}\tag{DP}
 \begin{cases}
 \Delta_{M} u=0 & \hbox{in } \Om,\\
 u|_{\Gamma_1}=t_1, & u|_{\Gamma_2}=t_2.
 \end{cases}
 \end{equation}
 Since now on we adopt a convention that for a fixed manifold $M$ and a fixed metric $g$ we will omit the subscript $M$ in $\Delta_M$ and write $\Delta$ for simplicity. Furthermore, when discussing $2$-dimensional manifolds we use the Gaussian curvature $K$ instead of the Ricci curvature, since $\Ric(\nabla u, \nabla u)\equiv K|\nabla u|^2$ for $n=2$.
 
 The following auxiliary result is a counterpart of the well-known subharmonicity property for harmonic functions in the Euclidean setting.
\begin{lem}\label{lem:log}
 Let $\Om$ be a domain in a $2$-dimensional Riemannian manifold and $u$ be a harmonic function in $\Om$. Then,  
 \begin{equation}\label{log-sub-ineq}
 \Delta (\log|\nabla u|)=K%\frac{\Ric(\nabla u, \nabla u)}{|\nabla u|^2}.
 \end{equation}
at points, where $|\nabla u|\neq 0$. In particular, if $K|_{\Om}\geq 0$, then $
  \Delta(\log|\nabla u|)\geq 0$, while if $K|_{\Om}\leq 0$, then $
  \Delta(\log|\nabla u|)\leq 0$.
\end{lem}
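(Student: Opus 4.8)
\emph{Proof strategy.} The plan is to obtain \eqref{log-sub-ineq} from the Bochner--Weitzenb\"ock formula together with a Kato-type \emph{equality} that is special to dimension two. Near a point where $|\nabla u|\neq 0$ the function $w:=|\nabla u|^2$ is smooth and positive, and since $\log|\nabla u|=\tfrac12\log w$ one has
\begin{equation*}
\Delta\log|\nabla u|=\frac12\Delta\log w=\frac12\left(\frac{\Delta w}{w}-\frac{|\nabla w|^2}{w^2}\right).
\end{equation*}
First I would compute $\Delta w$ via Bochner: because $\Delta u=0$,
\begin{equation*}
\frac12\Delta w=|\nabla^2 u|^2+\langle\nabla u,\nabla\Delta u\rangle+\Ric(\nabla u,\nabla u)=|\nabla^2 u|^2+K\,w,
\end{equation*}
where in the last step we used that on a surface $\Ric(\nabla u,\nabla u)=K|\nabla u|^2$. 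Moreover $\nabla w=2\,\nabla^2 u(\nabla u,\cdot)^{\sharp}$, so $|\nabla w|^2=4\,|\nabla^2 u(\nabla u,\cdot)|^2$. Inserting these two expressions gives
\begin{equation*}
\Delta\log|\nabla u|=\frac{|\nabla^2 u|^2}{w}+K-\frac{2\,|\nabla^2 u(\nabla u,\cdot)|^2}{w^2}.
\end{equation*}

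It therefore remains to verify the pointwise identity
\begin{equation*}
|\nabla^2 u|^2\,|\nabla u|^2=2\,|\nabla^2 u(\nabla u,\cdot)|^2\qquad\text{wherever }\nabla u\neq 0,
\end{equation*}
which is where the two-dimensionality and the harmonicity of $u$ both enter. I would check it in a local orthonormal frame $\{e_1,e_2\}$ with $e_1=\nabla u/|\nabla u|$; writing $h_{ij}=\nabla^2 u(e_i,e_j)$, harmonicity forces $h_{22}=-h_{11}$, hence $|\nabla^2 u|^2=h_{11}^2+2h_{12}^2+h_{22}^2=2(h_{11}^2+h_{12}^2)$, while $\nabla^2 u(\nabla u,\cdot)$ has coordinates $|\nabla u|\,(h_{11},h_{12})$ in the dual frame, so $|\nabla^2 u(\nabla u,\cdot)|^2=|\nabla u|^2(h_{11}^2+h_{12}^2)$. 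Comparing the two lines yields the identity, and combining it with the previous display gives exactly $\Delta\log|\nabla u|=K$. The consequences for the sign of $\Delta\log|\nabla u|$ when $K$ has a definite sign are then immediate.

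There is no serious obstacle here; the only delicate point is the Kato-type equality, which is precisely what makes the argument dimension-specific (in general one only has the refined Kato inequality $|\nabla^2 u|^2\geq\frac{n}{n-1}\,\bigl|\nabla|\nabla u|\bigr|^2$ for harmonic $u$, with equality exactly when $n=2$). An alternative route, closer to the conformal point of view used elsewhere in the paper, is to pass to local isothermal coordinates $g=e^{2\phi}(dx^2+dy^2)$: then $u$ is also harmonic for the flat Laplacian $\Delta_0$, so $\pa{x}u-i\pa{y}u$ is holomorphic and $\log|\nabla_0 u|$ is $\Delta_0$-harmonic off the critical set; since $|\nabla_g u|^2=e^{-2\phi}|\nabla_0 u|^2$ and $K=-e^{-2\phi}\Delta_0\phi$, one concludes $\Delta_g\log|\nabla_g u|=e^{-2\phi}\Delta_0\bigl(\log|\nabla_0 u|-\phi\bigr)=K$.
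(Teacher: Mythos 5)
Your proof is correct, but your primary argument takes a genuinely different route from the paper's. The paper proves the lemma by passing to local isothermal coordinates $g=\lambda^2\,dz^2$ via Korn--Lichtenstein, using the conformal invariance of harmonicity, the Euclidean harmonicity of $\log|\nabla_0 u|$ off the critical set, and the conformal curvature formula $\Delta_0\log\lambda=-\lambda^2K$ --- which is precisely the ``alternative route'' you sketch in your last paragraph. Your main argument instead works intrinsically: you expand $\Delta\log|\nabla u|$ in terms of $w=|\nabla u|^2$, apply the Bochner formula, and reduce everything to the pointwise identity $|\nabla^2u|^2|\nabla u|^2=2\,|\nabla^2u(\nabla u,\cdot)|^2$, which (since $\nabla|\nabla u|=\nabla^2u(\nabla u,\cdot)/|\nabla u|$) is exactly the two-dimensional refined Kato \emph{equality} that the paper establishes separately as Lemma 2.2, equation \eqref{refined-Kato2}; your frame computation with $h_{22}=-h_{11}$ verifies it correctly. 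What your route buys is a coordinate-free derivation that makes completely transparent where two-dimensionality and harmonicity enter (through the Kato equality), and it reuses infrastructure the paper already builds for Lemma \ref{lem: L''}; what the paper's conformal route buys is brevity and a form of the argument that survives in the singular BIC setting of Section 3 and in Appendix A, where only the conformal factor is available. One small caveat: your parenthetical that equality in the refined Kato inequality holds ``exactly when $n=2$'' should be read as a statement about the identity holding for all harmonic functions, not that equality is impossible at individual points in higher dimensions; this does not affect the proof.
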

Before proving the lemma we recall the refined Kato (in)equality, a standard tool in geometric analysis (see for instance \cite[p. 520]{lw} or \cite[Proposition 1.3]{PRS}  and references therein).  In dimension $2$, the refined Kato's inequality turns out to be an equality; see  \eqref{refined-Kato2}. Although this observation might be known to experts, we were not able to find a reference in the literature. Accordingly, we decided to provide a complete proof. 
\begin{lem}\label{lem: kato}
	Let $\Om$ be a domain in an $n$-dimensional Riemannian manifold and $u$ be harmonic in $\Om$. Then, at points where $|\nabla u|\neq 0$, it holds that
	\begin{equation}\label{refined-Kato}
	|\nabla^2u|^2\geq \frac{n}{n-1}\left|\nabla|\nabla u|\right|^2,\quad n\geq 2.
	\end{equation}
	Moreover, if $n=2$ then one has indeed  
	\begin{equation}\label{refined-Kato2}
|\nabla^2u|^2=2\left|\nabla|\nabla u|\right|^2.
\end{equation}
\end{lem}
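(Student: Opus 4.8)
The plan is to argue pointwise at a fixed $x\in\Om$ with $|\nabla u(x)|\neq 0$, exploiting that both sides of \eqref{refined-Kato} are scalar invariants and hence may be evaluated in \emph{any} orthonormal basis of $T_xM$. I would choose an orthonormal basis $e_1,\dots,e_n$ of $T_xM$ adapted to $u$, namely with $e_1=\nabla u(x)/|\nabla u(x)|$. Writing $u_i:=\langle\nabla u,e_i\rangle$ and $u_{ij}:=\nabla^2u(e_i,e_j)$, this gives $u_1=|\nabla u|$ and $u_i=0$ for $i\geq 2$ at $x$, while $(u_{ij})$ is symmetric and, by harmonicity, trace-free: $\sum_i u_{ii}=\Delta u=0$.

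First I would record how $\nabla|\nabla u|$ looks in this frame. Since the connection is metric, differentiating $|\nabla u|^2=\langle\nabla u,\nabla u\rangle$ yields $\nabla_i|\nabla u|^2=2\nabla^2u(e_i,\nabla u)=2\sum_j u_{ij}u_j$, so evaluating at $x$ gives $2|\nabla u|\,\nabla_i|\nabla u|=2|\nabla u|\,u_{1i}$, i.e. $\nabla_i|\nabla u|=u_{1i}$. Therefore, at $x$,
\[
\big|\nabla|\nabla u|\big|^2=u_{11}^2+\sum_{i\geq 2}u_{1i}^2,\qquad
|\nabla^2u|^2=\sum_{i,j}u_{ij}^2=u_{11}^2+2\sum_{i\geq 2}u_{1i}^2+\sum_{i,j\geq 2}u_{ij}^2 ,
\]
whence
\[
|\nabla^2u|^2-\big|\nabla|\nabla u|\big|^2=\sum_{i\geq 2}u_{1i}^2+\sum_{i,j\geq 2}u_{ij}^2 .
\]

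The remaining step combines trace-freeness with Cauchy--Schwarz. From $\sum_{i\geq 2}u_{ii}=-u_{11}$ and the power-mean inequality applied to the $n-1$ diagonal entries $u_{22},\dots,u_{nn}$ one gets
\[
\sum_{i,j\geq 2}u_{ij}^2\geq\sum_{i\geq 2}u_{ii}^2\geq\frac{1}{n-1}\Big(\sum_{i\geq 2}u_{ii}\Big)^2=\frac{u_{11}^2}{n-1}.
\]
Since $n\geq 2$ forces $\tfrac1{n-1}\leq 1$, we have $\sum_{i\geq 2}u_{1i}^2\geq\tfrac1{n-1}\sum_{i\geq 2}u_{1i}^2$, and adding this to the previous display gives $|\nabla^2u|^2-\big|\nabla|\nabla u|\big|^2\geq\tfrac1{n-1}\big(u_{11}^2+\sum_{i\geq 2}u_{1i}^2\big)=\tfrac1{n-1}\big|\nabla|\nabla u|\big|^2$, which rearranges to \eqref{refined-Kato}. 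When $n=2$ the frame is just $\{e_1,e_2\}$, there is no off-diagonal tail, and $u_{22}=-u_{11}$, so $|\nabla^2u|^2=u_{11}^2+2u_{12}^2+u_{22}^2=2(u_{11}^2+u_{12}^2)=2\big|\nabla|\nabla u|\big|^2$, i.e. \eqref{refined-Kato2}; here no inequality is lost because both estimates used above are equalities. I do not expect a genuine obstacle: the only points deserving a line of care are that $|\nabla u|$ is indeed differentiable on the open set $\{|\nabla u|\neq 0\}$ (so $\nabla|\nabla u|$ is meaningful) and that it suffices to pick the adapted orthonormal basis at the single point $x$, with no need for a smooth local frame, precisely because $|\nabla^2u|^2$ and $\big|\nabla|\nabla u|\big|^2$ are frame-independent scalars.
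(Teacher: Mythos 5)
Your proof is correct, and it takes a genuinely different (though closely related) linear-algebraic route from the paper's. The paper fixes normal coordinates at $p$, rotates to \emph{diagonalize the Hessian}, so that $|\nabla^2u|^2=\sum_k\lambda_k^2$ and $|\nabla u|^2\,\big|\nabla|\nabla u|\big|^2=\sum_k\lambda_k^2u_k^2$, and then applies the Cauchy--Schwarz estimate $\lambda_j^2\le\frac{n-1}{n}\sum_k\lambda_k^2$ to the trace-free eigenvalues; the $n=2$ equality comes from $\lambda_2^2=\lambda_1^2$. You instead \emph{adapt the frame to the gradient}, taking $e_1=\nabla u/|\nabla u|$, so that $\nabla_i|\nabla u|=u_{1i}$ identifies $\big|\nabla|\nabla u|\big|^2$ with the squared norm of the first row of the Hessian, and the trace-free Cauchy--Schwarz step is applied to the diagonal block $u_{22},\dots,u_{nn}$ to recover $u_{11}^2/(n-1)$. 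Both proofs hinge on the same two ingredients (harmonicity as trace-freeness plus Cauchy--Schwarz on $n-1$ terms), but your decomposition makes the quantity $|\nabla^2u|^2-\big|\nabla|\nabla u|\big|^2$ manifestly a sum of squares, which cleanly exposes where the inequality can be strict and why $n=2$ gives an identity; it also needs only an orthonormal basis of $T_xM$ rather than a normal coordinate system followed by a diagonalizing rotation. The paper's eigenvalue formulation, on the other hand, isolates the sharp constant as a statement purely about trace-free symmetric matrices, independent of the gradient direction. Your side remarks (differentiability of $|\nabla u|$ on $\{|\nabla u|\ne0\}$, frame-independence of the scalars) are exactly the right points to flag and are handled correctly.
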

 \begin{proof}
 	We prove \eqref{refined-Kato} and \eqref{refined-Kato2} at an arbitrary point $p\in \Omega$. Let us consider a normal coordinate system $(x_1,\dots,x_n)$ defined in a neighbourhood of $p$. Since Christoffel's symbols vanish at $p$, we have that $(\nabla^2 u)_{ij}(p)= \partial_i\partial_j u$ is a symmetric matrix. Accordingly, there exists an orthonormal basis $\{e_j\}_{j=1}^n$ of $T_pM$ which diagonalizes $(\nabla^2 u)_{ij}(p)$. Note that $\{e_j\}_{j=1}^n$ can be obtained by $\{\partial_j\}_{j=1}^n$ via an orthonormal matrix $A$. In particular, composing with $A$ we get a new  normal coordinate system $(y_1,\dots,y_n)$ defined in a neighbourhood of $p$ with respect to which:
 	\begin{itemize}
 	\item	$(0,\dots,0)$ represents $p$,
 	\item $g_{ij}(p)=\delta_{ij}$ and $g^{ij}(p)=\delta^{ij}$. Hence, in particular $|\nabla u|^2(p)=\sum_{j=1}^n(\partial_ju)^2=:\sum_{j=1}^nu_j^2$.
 	\item $\partial_ig_{jk}(p)=0$ and $\Gamma_{jk}^i(p)=0$. Hence, in particular $(\nabla^2 u)_{ij}(p)= \partial_i\partial_j u=:u_{ji}$.
 	\item $(\nabla^2 u)_{ij} (p) =u_{ij}(p)= \lambda_i\delta_{ij}$ for real numbers $\lambda_1,\dots,\lambda_n$ satisfying $\sum_{j=1}^n\lambda_j =0$ (because $\Delta u=0$). 	
 	\end{itemize}
 In any point of this coordinate system we have $|\nabla u|^2=\sum_{i,j=1}^nu_iu_j g^{ij}$, so that at $p$
 \begin{align*}
 (\nabla |\nabla u|^2)_k(p)&=\sum_{i,j=1}^n\Big(u_{ik}(p)u_{j}(p) g^{ij}(p)+u_{i}(p)u_{jk}(p) g^{ij}(p)+u_{i}(p)u_{j}(p) \partial_k g^{ij}(p)\Big)\\
 &=\sum_{i,j=1}^n\Big(\lambda_k\delta_{ik}u_{j}(p) \delta^{ij}+u_{i}(p)\lambda_k\delta_{jk}(p) \delta^{ij}\Big)\\
 &=2\lambda_k u_k(p),
 \end{align*}
 and 
 \begin{equation}\label{hess1}
 |\nabla u|^2(p)|\nabla |\nabla u||^2(p)=\frac{1}{4}|\nabla |\nabla u|^2|^2(p)= \sum_{k=1}^n \lambda_k^2 u^2_k(p).
 \end{equation}
 On the other hand
 \[
 |\nabla^2u|^2(p)=\sum_{i,j,k,l=1}^n (\nabla^2u)_{ij}(p)(\nabla^2u)_{kl}(p) g^{ik}(p)g^{jl}(p)=\sum_{k=1}^n \lambda_k^2,
 \]
 so that 
 \begin{equation}\label{hess2}
|\nabla u|^2(p)|\nabla^2 u|^2(p) = \sum_{k=1}^n\lambda_k^2\sum_{k=1}^n u_k^2(p). 
 \end{equation}
 Now, if $n=2$, we have $\lambda_2^2=(-\lambda_1)^2$ and so 
 \[
|\nabla u|^2(p)|\nabla^2 u|^2(p) = 2\lambda_1^2(u_1^2(p)+u_2^2(p))=2(\lambda_1^2 u^2_1(p)+\lambda_2^2 u^2_2(p))= 2|\nabla u|^2(p)|\nabla |\nabla u||^2(p).  
 \]
 From this, identity~\eqref{refined-Kato2} follows immediately. For general $n>2$, since $\sum_{k=1}^n\lambda_k=0$, we get for every $j$,
 \[
 \lambda_j^2=\frac{n-1}{n}\lambda_j^2 + \frac{1}{n}\Big(\sum_{k\neq j}\lambda_k\Big)^2 \leq \frac{n-1}{n}\lambda_j^2 + \frac{1}{n}(n-1)\sum_{k\neq j}\lambda^2_k =  \frac{n-1}{n}\sum_{k=1}^n\lambda^2_k.
 \]
 By combining these inequalities with \eqref{hess1} and \eqref{hess2} we obtain estimate~\eqref{refined-Kato} and conclude the proof.
 \end{proof}
\begin{proof}[Proof of Lemma \ref{lem:log}]
%Let us observe that
%\[
%  \Delta (\log|\nabla u|)=\div\left(\frac{\nabla |\nabla u|}{|\nabla u|} \right)=\frac{\Delta |\nabla u|}{|\nabla u|}-\frac{\left|\nabla|\nabla u|\right|^2}{|\nabla u|^2}.
%\]
%Recall the Bochner formula: 
%\begin{equation}\label{formula-Bochner}
%\Delta \frac{|\nabla u|^2}{2}=|\nabla^2u|^2+\Ric(\nabla u, \nabla u).
%\end{equation}
% Upon noticing that
%\[
%\Delta \frac{|\nabla u|^2}{2}=\div\left(\nabla \frac{|\nabla u|^2}{2}\right)=|\nabla u|\Delta|\nabla u|+\left|\nabla|\nabla u|\right|^2
%\]
%we infer that
%\begin{equation}\label{cor-Bochner}
% \frac{\Delta|\nabla u|}{|\nabla u|}=\frac{|\nabla^2u|^2-\left|\nabla|\nabla u|\right|^2
%}{|\nabla u|^2}+\frac{\Ric(\nabla u, \nabla u)}{|\nabla u|^2}.
%\end{equation}
%Hence,
%\begin{equation*}%\label{log-sub-ineq}
%\Delta (\log|\nabla u|)=\frac{|\nabla^2u|^2-2\left|\nabla|\nabla u|\right|^2
%}{|\nabla u|^2}+\frac{\Ric(\nabla u, \nabla u)}{|\nabla u|^2}.
%\end{equation*}
%The refined Kato (in)equality for harmonic functions stated in Lemma \ref{lem: kato} implies the assertion of the lemma.
 Since all assertions of the lemma are pointwise, we may apply the Korn--Lich\-ten\-stein theorem and locally change the coordinates to the conformal (isothermal) ones. The existence of the conformal coordinates on the Riemannian surfaces can be proven, e.g. by employing the Beltrami equation. Then, locally the given metric $g$ reads $g=\lambda(z)^2 dz^2$ for a smooth $\lambda>0$ and planar Euclidean coordinates $z=(x,y)$. Recall that the harmonicity of $u$ in dimension $2$ and the fact that $\nabla u$ is non-vanishing are conformal invariants. Thus, upon denoting the Laplacian and, respectively, the length of the gradient in the Euclidean coordinates by $\Delta_0$, respectively $|\nabla \cdot|_{0}$, the following computations hold true, completing the proof of the lemma:
\begin{align*}
\Delta (\log|\nabla u|)=\lambda^{-2} \Delta_0 \log (\lambda^{-1} |\nabla u|_{0})&=-\lambda^{-2} \Delta_0 \log (\lambda)+\lambda^{-2} \Delta_0 \log (|\nabla u|_{0}) \\
&=-\lambda^{-2} \lambda^{2} (-K)+0 =K.
\end{align*}
%where $K$ stands for the Gauss curvature of surface $M$. Since, in our case $K=\frac{\Ric(\nabla u, %\nabla u)}{|\nabla u|^2}$ the proof of the lemma is completed.
\end{proof}

\begin{rem} In dimensions $n>2$ the above approach is not efficient. However, one may instead prove that $\Delta |\nabla u|^\alpha\geq 0$ for $\alpha\geq \frac{n-2}{n-1}$ and $n\geq 2$ when the Ricci curvature is non-negative. Indeed, a direct computation using the curvature assumption, the Bochner formula \eqref{formula-Bochner} and the refined Kato inequality gives
	\begin{align*}
\Delta 	|\nabla u|^\alpha &= \alpha |\nabla u|^{\alpha -2}\left[(\alpha -1)|\nabla|\nabla u||^2+|\nabla u|\Delta|\nabla u|\right]\\
& =\alpha |\nabla u|^{\alpha -2}\left[(\alpha -2)|\nabla|\nabla u||^2+
|\nabla^2 u|^2 + \Ric(\nabla u, \nabla u)
\right] \\
&\geq \alpha |\nabla u|^{\alpha -2}\Ric(\nabla u, \nabla u) \geq  0.
	\end{align*}
The relation $\Delta |\nabla u|^\alpha\geq \alpha \kappa |\nabla u|^\alpha$ on manifolds with Ricci curvature lower bounded by $\kappa$ was used for instance in \cite[Theorem 2.1]{lw} to obtain some structure theorems and in \cite{dindos, MeVe} to study the Hardy spaces.
\end{rem}

%\begin{rem}
% In the case $M=\R^2$ and $n=2$ the refined Kato inequality becomes the equality.
%\end{rem}
%
%
%This observation applied to estimate~\eqref{log-sub-ineq} results in the following upper bound for $\Delta(\log|\nabla u|)$.
%\begin{lem}
% Let $\Om$ be a domain in a $2$-dimensional Riemannian manifold with $\Ric|_{\Om}\leq k$ and $u$ be a harmonic in $\Om$. Then, $\Delta(\log|\nabla u|)\leq k$ in $\Om$.
%\end{lem}

\begin{rem}\label{rem: hd}
 Suppose that in the setting of $n$-dimensional manifolds, for $n\geq 3$, one can prove the following variant of the Kato estimate
 \begin{equation}\label{oq}
 	 |\nabla^2u|^2\leq 2\left|\nabla|\nabla u|\right|^2+k|\nabla u|^2.
 	  \end{equation}
   Then, one could assume that $\Ric\leq k$ to obtain a variant of Lemma~\ref{lem:log} with the upper bound $\Delta(\log|\nabla u|)\leq 2k$. For this reason, we wonder whether there exists some class of assumptions implying the validity of \eqref{oq}.
\end{rem}
%%\komG{Apparently our computation did not lead neither to prove or disprove such a property (according to Tomasz's previous comments one would have to assume for instance smallness of $|\nabla^2 u|\leq 1$ and $\min |\nabla u|\geq c>0$). Let us keep that in mind}
%\komT{I think $|\nabla^2u|^2\leq 2\left|\nabla|\nabla u|\right|^2+k|\nabla u|^2$ holds at least on $3$-manifolds:
%
%By using the notation of the proof of Lemma~\ref{lem: kato} we have that at point $p\in \Om$ (since
%$\sum_{k=1}^3 \lambda_k=0$)
%\begin{align*}
% |\nabla^2u|^2(p)|\nabla u|^2&=(\sum_{k=1}^3 \lambda_k^2)(\sum_{k=1}^3 u_k^2)\\
% &=\sum_{k=1}^3 \lambda_k^2u_k^2+(-\lambda_2-\lambda_3)^2(u_2^2+u_3^2)+\lambda_2^2(u_1^2+u_3^2)+\lambda_3^2(u_1^2+u_2^2)\\
% &\leq _{_{\tiny{(2ab\leq a^2+b^2)}}}\sum_{k=1}^3 \lambda_k^2u_k^2+\lambda_2^2(u_1^2+2u_2^2+3u_3^2)+\lambda_3^2(u_1^2+3u_2^2+2u_3^2) \\
% &\leq \sum_{k=1}^3 \lambda_k^2u_k^2+3(\lambda_2^2+\lambda_3^2)(u_1^2+u_2^2+u_3^2)\\
% &= \sum_{k=1}^3 \lambda_k^2u_k^2+3[\lambda_2^2(u_1^2+u_2^2+u_3^2)+\lambda_3^2(u_1^2+u_2^2+u_3^2)]\\
% &\leq \sum_{k=1}^3 \lambda_k^2u_k^2+3\sqrt{2}\sqrt{\lambda_2^2+\lambda_3^2}\sqrt{ u_1^2+u_2^2+u_3^2}\\
% \end{align*}
%
%}
Recall that the measure of a level set of a function $v:\Om\to\R$ for a domain $\Om\subset M^n$ is given by
\begin{equation}\label{iso-in-length}
 L(t)=\int_{\{x\in \Om\,:\,v(x)=t\}}1\,{\rm d}\mathcal{H}^{n-1},
\end{equation}
where ${\rm d}\mathcal{H}^{n-1}$ stands for the {$(n-1)$-Hausdorff} measure.

In the next lemma we prove formulas allowing us to compute the first and the second derivatives of $L$ with respect to the height of the level curve of harmonic functions in a topological annulus $\Om\subset M^n$. The lemma generalizes Lemma 2.1 in~\cite{al2}.
%\komT{I reformulated the lemma below, because I think the proof is not using the fact that we are on annulus and that the boundary data are constant?}\komG{Well, I'm not sure. We use the fact that the levels sets $u=t$ do not meet (or at least are orthogonal to) the boundary of $\Omega$. This together with $|\nabla u|>0$ should imply, by some sort of Morse theory, that $\Omega$ is an annulus, or possibly a sector of an annulus with the additional condition $\partial_\nu u =0 $ on the "non-annular boundaries" of $\Omega$}
\begin{lem}\label{lem: L''}
Suppose that  $u:\Om\to\R$  is a harmonic function satisfying $|\nabla u|>0$ in a topological annulus $\Om\subset M^n$ and that $u$ attains constant boundary values, respectively, $u|_{\Gamma_1}=t_1$ and $u|_{\Gamma_2}=t_2$.
Then, the following holds for all $t_1<t<t_2$:
%$t\in(\min_{\Om}u, \max_{\Om}u)$:
\begin{align}
 L'(t)&=\int_{\{x\in \Om\,:\,u(x)=t\}} \div\left(\frac{\nabla u}{|\nabla u|}\right)\frac{{\rm d}\mathcal{H}^{n-1}}{|\nabla u|}
 =\int_{\{x\in \Om\,:\,u(x)=t\}} \left \langle -\frac{\nabla u}{|\nabla u|}, \frac{ \nabla |\nabla u|}{|\nabla u|^2} \right \rangle {\rm d}\mathcal{H}^{n-1}. \label{L-der1} 
 \end{align}
Moreover, %\komG{I added to this lemma the $n$-dimensional formula for $L''$ as you suggested.}
	\begin{align}\label{L'' n}
	L''(t) &\leq \int_{\{x\in \Om\,:\,u(x)=t\}} \left(\frac{2n-3}{n-1}\right)\frac{\big|\nabla |\nabla u|\big|^2}{|\nabla u|^4} - \frac{\Ric(\nabla u, \nabla u)}{|\nabla u|^4} {\rm d}\mathcal{H}^{n-1}.
\end{align}
 Additionally, if $\Omega$ is a domain in a $2$-dimensional manifold with Gauss curvature $K=K(x)$, inequality \eqref{L'' n} holds in the following stronger form
 \begin{align}
 L''(t) &= \int_{\{x\in \Om\,:\,u(x)=t\}} \frac{\big|\nabla |\nabla u|\big|^2}{|\nabla u|^4} - \frac{K}{|\nabla u|^2} {\rm d}\mathcal{H}^1. \label{L-der2}   
 \end{align}
\end{lem}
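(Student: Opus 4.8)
The plan is to write $L'$ and $L''$ as surface integrals over the level sets by combining the coarea formula with the divergence theorem, and then to simplify the integrands using $\Delta u=0$ together with the Bochner formula and the Kato identities of Lemma~\ref{lem: kato}. Since $u$ is non-constant and harmonic with $u|_{\Gamma_1}=t_1<t_2=u|_{\Gamma_2}$, the strong maximum principle yields $t_1<u<t_2$ on $\Om$, so for each $t\in(t_1,t_2)$ the level set $\{u=t\}$ is a compact smooth hypersurface contained in $\Om$ (implicit function theorem, using $|\nabla u|>0$); these hypersurfaces foliate $\Om$ smoothly and $|\nabla u|\ge c(s_0,s_1)>0$ on every slab $\{s_0\le u\le s_1\}$ with $t_1<s_0<s_1<t_2$. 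Working on such slabs rather than up to $\partial\Om$ also makes the argument independent of the regularity of $\Gamma_1,\Gamma_2$. The auxiliary fact I would establish first is the following differentiation rule: for a $C^1$ vector field $X$ defined near the level sets and $t_1<s_0<t<t_2$, the divergence theorem on $\{s_0<u<t\}$ (outward unit normal $\nabla u/|\nabla u|$ on $\{u=t\}$ and $-\nabla u/|\nabla u|$ on $\{u=s_0\}$) followed by the coarea formula gives
\begin{align*}
\int_{\{u=t\}}\Big\langle X,\frac{\nabla u}{|\nabla u|}\Big\rangle{\rm d}\mathcal{H}^{n-1}-\int_{\{u=s_0\}}\Big\langle X,\frac{\nabla u}{|\nabla u|}\Big\rangle{\rm d}\mathcal{H}^{n-1}&=\int_{\{s_0<u<t\}}\div X\,{\rm d}x\\
&=\int_{s_0}^{t}\int_{\{u=s\}}\frac{\div X}{|\nabla u|}{\rm d}\mathcal{H}^{n-1}{\rm d}s,
\end{align*}
and since $s\mapsto\int_{\{u=s\}}|\nabla u|^{-1}\,\div X\,{\rm d}\mathcal{H}^{n-1}$ is continuous, differentiating in $t$ produces
\[
\frac{{\rm d}}{{\rm d}t}\int_{\{u=t\}}\Big\langle X,\frac{\nabla u}{|\nabla u|}\Big\rangle{\rm d}\mathcal{H}^{n-1}=\int_{\{u=t\}}\frac{\div X}{|\nabla u|}{\rm d}\mathcal{H}^{n-1}.
\]

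For \eqref{L-der1} I would apply this with $X=\nabla u/|\nabla u|$: the left-hand integrand is then identically $1$, so the left side equals $L'(t)$, while $\div(\nabla u/|\nabla u|)=|\nabla u|^{-1}\Delta u-|\nabla u|^{-2}\langle\nabla u,\nabla|\nabla u|\rangle=-|\nabla u|^{-2}\langle\nabla u,\nabla|\nabla u|\rangle$ since $\Delta u=0$; both displayed forms in \eqref{L-der1} follow at once.

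For the second derivative, note that the identity just obtained reads $L'(t)=\int_{\{u=t\}}\langle Y,\nabla u/|\nabla u|\rangle{\rm d}\mathcal{H}^{n-1}$ with $Y:=-\nabla|\nabla u|/|\nabla u|^2$; this is the one point that requires a little foresight. Applying the differentiation rule once more with $X=Y$ gives $L''(t)=\int_{\{u=t\}}|\nabla u|^{-1}\,\div Y\,{\rm d}\mathcal{H}^{n-1}$, and a direct computation yields $\div Y=-|\nabla u|^{-2}\Delta|\nabla u|+2|\nabla u|^{-3}\big|\nabla|\nabla u|\big|^2$. To eliminate $\Delta|\nabla u|$ I would combine the Bochner formula \eqref{formula-Bochner} (with $\Delta u=0$), $\tfrac12\Delta|\nabla u|^2=|\nabla^2u|^2+\Ric(\nabla u,\nabla u)$, with the elementary identity $\tfrac12\Delta|\nabla u|^2=|\nabla u|\,\Delta|\nabla u|+\big|\nabla|\nabla u|\big|^2$, obtaining $|\nabla u|\,\Delta|\nabla u|=|\nabla^2u|^2+\Ric(\nabla u,\nabla u)-\big|\nabla|\nabla u|\big|^2$. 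Substituting turns the surface integral into the exact identity
\[
L''(t)=\int_{\{u=t\}}\frac{3\big|\nabla|\nabla u|\big|^2-|\nabla^2u|^2-\Ric(\nabla u,\nabla u)}{|\nabla u|^4}{\rm d}\mathcal{H}^{n-1}.
\]

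From this identity the stated conclusions are immediate. Plugging in the refined Kato inequality \eqref{refined-Kato}, $|\nabla^2u|^2\ge\frac{n}{n-1}\big|\nabla|\nabla u|\big|^2$, and using $3-\frac{n}{n-1}=\frac{2n-3}{n-1}$, gives the estimate \eqref{L'' n}. For $n=2$ the Kato \emph{equality} \eqref{refined-Kato2}, $|\nabla^2u|^2=2\big|\nabla|\nabla u|\big|^2$, reduces the numerator to $\big|\nabla|\nabla u|\big|^2-\Ric(\nabla u,\nabla u)$, and since $\Ric(\nabla u,\nabla u)=K|\nabla u|^2$ in dimension two the integrand becomes $|\nabla u|^{-4}\big|\nabla|\nabla u|\big|^2-K|\nabla u|^{-2}$, which is \eqref{L-der2}. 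The main obstacle here is not the algebra but the analytic bookkeeping: justifying that the surface integrals may be differentiated in $t$ and that the coarea/divergence identity applies, which is precisely where the smoothness of the level-set foliation and the local positivity of $|\nabla u|$ enter; isolating the differentiation rule at the outset is what keeps this step clean.
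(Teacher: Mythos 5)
Your proposal is correct and follows essentially the same route as the paper: both arguments convert the $t$-derivative into a surface integral via the divergence theorem combined with the coarea formula (the paper via difference quotients over thin slabs $\{t<u<t+\ep\}$, you via a slab from a fixed $s_0$ and the fundamental theorem of calculus), and then both perform the identical algebra with the Bochner formula and the refined Kato inequality/identity to reach \eqref{L'' n} and \eqref{L-der2}. Your isolation of the general differentiation rule for $\int_{\{u=t\}}\langle X,\nu\rangle$ is a clean packaging of what the paper does twice in-line, but it is not a different method.
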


%\begin{rem}
% In what follows we will apply Lemma~\ref{lem: L''} mostly in the case $\Om$ is a ring domain, see~\eqref{def-top-ring} and the Dirichlet problem~\eqref{DP}. Then $u$ attains constant boundary values, respectively, $u|_{\Gamma_1}=t_1$ and $u|_{\Gamma_2}=t_2$, and so $t_1<t<t_2$.
%\end{rem}

In the proof of the lemma we employ the following Bochner formula for harmonic functions:
\begin{equation}\label{formula-Bochner}
\Delta \frac{|\nabla u|^2}{2}=|\nabla^2u|^2+\Ric(\nabla u, \nabla u).
\end{equation}
Moreover, upon noticing that
\[
\Delta \frac{|\nabla u|^2}{2}=\div\left(\nabla \frac{|\nabla u|^2}{2}\right)=|\nabla u|\Delta|\nabla u|+\left|\nabla|\nabla u|\right|^2
\]
we obtain
\begin{equation}\label{cor-Bochner}
 \frac{\Delta|\nabla u|}{|\nabla u|}=\frac{|\nabla^2u|^2-\left|\nabla|\nabla u|\right|^2
}{|\nabla u|^2}+\frac{\Ric(\nabla u, \nabla u)}{|\nabla u|^2}.
\end{equation}

\begin{proof}
Since $|\nabla u|>0$ by assumptions, then $\nu=\frac{\nabla u}{|\nabla u|}$ is a unit vector normal to the level sets of $u$. Therefore, by the definition of the function $L$ in~\eqref{iso-in-length} and the Stokes theorem, we have that
 \begin{align*}
  L'(t)=\lim_{\ep \to 0}\frac{1}{\ep}\left(\int_{\{u=t+\ep\}} 1- \int_{\{u=t\}} 1\right) &=\lim_{\ep \to 0}\frac{1}{\ep}\left(\int_{\{u=t+\ep\}} \left \langle \nu, \frac{\nabla u}{|\nabla u|} \right \rangle - \int_{\{u=t\}} \left \langle \nu, \frac{\nabla u}{|\nabla u|} \right \rangle \right) \\
  &= \lim_{\ep \to 0}\frac{1}{\ep} \int_{\{t<u<t+\ep\}} \div\left(\frac{\nabla u}{|\nabla u|}\right).
  %=  -\lim_{\ep \to 0}\frac{1}{\ep} \int_{\{t<u<t+\ep\}} \frac{\langle \nabla u, \nabla |\nabla u|\rangle}{|\nabla u|}.
 \end{align*} 
 At this stage we notice that the following integrals converge in $L^1$, as $\ep\to 0$:
 \begin{equation}\label{conv-level-sets}
  \frac{1}{\ep}\int_{\{t<u<t+\ep\}} 1 \to \int_{\{u=t\}}\frac{1}{|\nabla u|}.
 \end{equation}
 Indeed, since the limit on the left hand side above equals $\frac{d}{dt}\left(\int_{\{u<t\}} 1\right)$, the claim follows from the coarea formula; see e.g. Proposition 3 and its proof in~\cite[Chapter 3.4.4]{EGbook} or Exc. III.12(c) in \cite[Ch. 3]{chavel}. Similarly, the proof of the same proposition implies that
 \[
 \lim_{\ep \to 0}\frac{1}{\ep} \int_{\{t<u<t+\ep\}} \div\left(\frac{\nabla u}{|\nabla u|}\right)=\int_{\{u=t\}} \frac{1}{|\nabla u|}\div\left(\frac{\nabla u}{|\nabla u|}\right).
 \]
 Hence, the first equation in \eqref{L-der1} is proved. In order to see the second representation formula in~\eqref{L-der1} we directly compute that
 \begin{equation} \label{lem:L''-ident}
  \div\left(\frac{\nabla u}{|\nabla u|}\right)\frac{1}{|\nabla u|}=\frac{1}{|\nabla u|^3}(\Delta u |\nabla u|-\langle \nabla u, \nabla|\nabla u|\rangle).
 \end{equation}  
 From this the aforementioned formula follows immediately for harmonic function $u$. 
 
 In order to show assertions~\eqref{L'' n} and \eqref{L-der2} we begin with the following computations involving~\eqref{L-der1} and the Stokes  theorem:
 \begin{align*}
 L''(t)&=\lim_{\ep \to 0}\frac{L'(t+\ep)- L'(t)}{\ep} \\
 &=\lim_{\ep \to 0}\frac{1}{\ep}\left(\int_{\{u=t+\ep\}}   \left \langle -\frac{\nabla u}{|\nabla u|}, \frac{ \nabla |\nabla u|}{|\nabla u|^2} \right \rangle- \int_{\{u=t\}}  \left \langle -\frac{\nabla u}{|\nabla u|}, \frac{ \nabla |\nabla u|}{|\nabla u|^2} \right \rangle \right)\\
 &= \lim_{\ep \to 0}\frac{1}{\ep} \int_{\{t<u<t+\ep\}} -\div\left(\frac{\nabla |\nabla u|}{|\nabla u|^2}\right).
   \end{align*} 
 We continue our computations as in the proof of~\eqref{L-der1} and employ the Bochner identity~\eqref{formula-Bochner} together with~\eqref{cor-Bochner} to obtain that:
 \begin{align}	
  & \lim_{\ep \to 0}\frac{1}{\ep} \int_{\{t<u<t+\ep\}} -\div\left(\frac{\nabla |\nabla u|}{|\nabla u|^2}\right) \nonumber \\
  &= \lim_{\ep \to 0}\frac{1}{\ep} \int_{\{t<u<t+\ep\}} \left[-\frac{\Delta |\nabla u|}{|\nabla u|^2} + 2 \frac{\left|\nabla |\nabla u|\right|^2}{|\nabla u|^3}\right] \nonumber \\
  &=\lim_{\ep \to 0}\frac{1}{\ep} \int_{\{t<u<t+\ep\}} \left[-\frac{|\nabla^2 u|^2-\left|\nabla |\nabla u|\right|^2}{|\nabla u|^3} - \frac{\Ric(\nabla u, \nabla u)}{|\nabla u|^3}+ 2 \frac{\left|\nabla |\nabla u|\right|^2}{|\nabla u|^3}\right] \nonumber \\
  &=\lim_{\ep \to 0}\frac{1}{\ep} \int_{\{t<u<t+\ep\}} \left[-\frac{|\nabla^2 u|^2-2\left|\nabla |\nabla u|\right|^2}{|\nabla u|^3} - \frac{\Ric(\nabla u, \nabla u)}{|\nabla u|^3}+ \frac{\left|\nabla |\nabla u|\right|^2}{|\nabla u|^3}\right]. \label{L-der2-est}
 \end{align}
By applying the $n$-dimensional refined Kato inequality at \eqref{L-der2-est} we get
\[
-\frac{|\nabla^2 u|^2}{|\nabla u|^3} - \frac{\Ric(\nabla u, \nabla u)}{|\nabla u|^3}+ 3 \frac{\left|\nabla |\nabla u|\right|^2}{|\nabla u|^3}\leq (2-\frac{1}{n-1}) \frac{\left|\nabla |\nabla u|\right|^2}{|\nabla u|^3} - \frac{\Ric(\nabla u, \nabla u)}{|\nabla u|^3}.
\]
Upon applying this inequality in \eqref{L-der2-est} and using the coarea formula, conclusion \eqref{L'' n} follows readily, cf. the discussion following derivation of~\eqref{conv-level-sets}.

Now assume that $n=2$, so that $\Ric(\nabla u, \nabla u)=K|\nabla u|^2$. By applying the $2$-dimensional refined Kato identity~\eqref{refined-Kato2} we find that
 \[
  L''(t)= \lim_{\ep \to 0}\frac{1}{\ep} \int_{\{t<u<t+\ep\}} \frac{\left|\nabla |\nabla u|\right|^2}{|\nabla u|^3}- \frac{K}{|\nabla u|}.
 \]
 Using again the coarea formula we arrive at the estimate~\eqref{L-der2}.
 \end{proof}

We are now in a position to present one of the main results of this section and of the whole paper. Namely, in Theorems~\ref{thm-main1} and~\ref{thm-main2} we provide a characterization of annular domains with Gaussian curvature $K\leq 0$ in terms of the log-convexity of level sets for harmonic functions. The corresponding results for non-smooth surfaces with bounded integral curvature are discussed in Section 3.

The following theorem gives a counterpart of Alessandrini's result~\cite[Theorem 1.1]{al2} for harmonic functions on non-positively curved Riemannian $2$-manifolds. Recall that we do not assume that the curvature is constant, i.e. we present the argument for $K=K(x)$ for $x\in M^2$. Notice that, following \cite{al2,long}, the second part of the assertion justifies the name isoperimetric inequality. 
\begin{theorem}[Isoperimetric inequality]\label{thm-main1}
	
Let $\Omega$ be a $C^{1,\alpha}$-topological annular domain in a $2$-di\-men\-sional Riemannian manifold $(M^2, g)$ of non-positive curvature $K|_\Om \leq 0$. Let $t_1,t_2\in \R$ be such that $t_1<t_2$ and let us consider a continuous up to the boundary harmonic solution $u$ of the Dirichlet problem~\eqref{DP} in $\Om$.
%for the Beltrami--Laplace harmonic operator $\Delta_M$ on $M^2$:
%	\[
%	\begin{cases}
%	\Delta_{M} u=0 & \hbox{in } \Om,\\
%	u|_{\Gamma_1}=t_1, & u|_{\Gamma_2}=t_2.
%	\end{cases}
%	\]
Then  
\[
(\ln L(t))''\geq 0\quad\hbox{for all } t\in (t_1,t_2).
\]
Moreover, the equality in the assertion holds on $(t_1,t_2)$ if and only if $K\equiv 0$, in which case all the level curves of $u$ are concentric circles and $\Om$ is the regular (circular) annulus in the plane.
\end{theorem}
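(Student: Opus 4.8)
The plan is to combine the integral formulas for $L'$ and $L''$ from Lemma~\ref{lem: L''} with the subharmonicity identity of Lemma~\ref{lem:log}, and then to track when all the intervening inequalities become equalities. Concretely, $(\ln L)'' = (L''L - (L')^2)/L^2$, so it suffices to prove $L''(t)\,L(t) \geq (L'(t))^2$. Using \eqref{L-der2} and \eqref{L-der1} together with the hypothesis $K\leq 0$, we have
\[
L''(t)\,L(t)\ \geq\ \left(\int_{\{u=t\}}\frac{\big|\nabla|\nabla u|\big|^2}{|\nabla u|^4}\,\mathrm d\mathcal H^1\right)\left(\int_{\{u=t\}}1\,\mathrm d\mathcal H^1\right),
\]
while, writing $\nu=\nabla u/|\nabla u|$,
\[
(L'(t))^2\ =\ \left(\int_{\{u=t\}}\left\langle \nu,\frac{\nabla|\nabla u|}{|\nabla u|^2}\right\rangle\mathrm d\mathcal H^1\right)^2\ \leq\ \left(\int_{\{u=t\}}\frac{\big|\nabla|\nabla u|\big|}{|\nabla u|^2}\,\mathrm d\mathcal H^1\right)^2\ \leq\ \left(\int_{\{u=t\}}\frac{\big|\nabla|\nabla u|\big|^2}{|\nabla u|^4}\,\mathrm d\mathcal H^1\right)\left(\int_{\{u=t\}}1\,\mathrm d\mathcal H^1\right),
\]
by Cauchy--Schwarz, applied to the functions $|\nabla|\nabla u||/|\nabla u|^2$ and $1$ on the level curve. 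Chaining these two estimates gives $L''L\geq (L')^2$ and hence $(\ln L)''\geq 0$. I would also note here that the differentiability of $L$ and positivity of $L(t)$ (so that division by $L^2$ is legitimate) follow from $|\nabla u|>0$, which holds because a harmonic function on a topological annulus with constant boundary data has no interior critical points; this should be recalled or cited where the theorem is set up.

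For the rigidity part, equality in $(\ln L)''=0$ on all of $(t_1,t_2)$ forces equality in each step above, for a.e.\ $t$. Equality in the use of $K\leq 0$ in \eqref{L-der2} forces $K\equiv 0$ on $\{u=t\}$ for a.e.\ $t$, hence $K\equiv 0$ on $\Omega$ by the coarea formula. Equality in the first Cauchy--Schwarz (the pointwise $|\langle\nu,\cdot\rangle|\leq|\cdot|$) forces $\nabla|\nabla u|$ to be parallel to $\nu=\nabla u/|\nabla u|$ along each level curve, i.e.\ the tangential derivative of $|\nabla u|$ along level curves vanishes, so $|\nabla u|$ is constant on each level curve. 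Equality in the second Cauchy--Schwarz forces $|\nabla|\nabla u||/|\nabla u|^2$ to be constant along each level curve, which is automatic once the previous equality holds.

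It remains to upgrade "$K\equiv 0$ and $|\nabla u|$ constant on level curves" to the geometric conclusion. With $K\equiv 0$ we pass to isothermal coordinates as in the proof of Lemma~\ref{lem:log}; since the flat metric and the given metric are conformal and $\Omega$ is a topological annulus, $\Omega$ is conformally a standard Euclidean annulus (or punctured disk), and the conformal factor $\lambda$ is harmonic-log, $\Delta_0\log\lambda=0$. The condition that $|\nabla u|$ is constant on level curves, which are also the level curves of the Euclidean harmonic function conjugate-paired with $u$, combined with $\lambda$ depending in the right way, pins down $u$ to be (a multiple of) $\log r$ in polar coordinates; its level curves are then concentric circles, and matching the two boundary curves $\Gamma_1,\Gamma_2$ to level sets of $\log r$ forces them to be concentric circles, so $\Omega$ is a round flat annulus. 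Conversely, on a round flat annulus $u=a\log r+b$ has $L(t)=2\pi e^{(t-b)/a}$, for which $\ln L$ is affine, giving equality. The main obstacle I anticipate is precisely this last step: carefully arguing that the conformal normalization is global on the annulus and that the two equality conditions together leave no freedom beyond the round annulus. One clean route is to observe that $|\nabla u|$ constant on level curves means $|\nabla u|=f(u)$ for some function $f$, and then \eqref{log-sub-ineq} with $K=0$ reads $\Delta\log|\nabla u|=0$; computing $\Delta(\log f(u))=f''/f\,|\nabla u|^2-\dots$ forces an ODE for $f$ whose solutions are $f(u)=c e^{\pm \mu u}$, after which the steepest-descent flow lines are geodesics and the level curves have constant geodesic curvature, i.e.\ are circles in the flat metric — see also the curvature identities of Section~4 for an alternative justification. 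I would present the ODE argument as the primary one since it is self-contained given the lemmas already proved.
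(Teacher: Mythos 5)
Your main inequality chain is exactly the paper's: the formulas \eqref{L-der1} and \eqref{L-der2} of Lemma~\ref{lem: L''}, dropping the curvature term via $K\leq 0$, and Cauchy--Schwarz on the level curve to get $(L')^2\leq LL''$. The genuine gap is the step you relegate to a citation, namely that $|\nabla u|>0$ in $\Om$. On a general surface this is not a standard fact to be ``recalled where the theorem is set up'' --- it occupies roughly half of the paper's proof. The argument there runs: if $\nabla u(x_0)=0$ for some $x_0\in\Om$, pass to isothermal coordinates near $x_0$ (harmonicity is conformally invariant in dimension $2$) and use the Hartman--Wintner local structure of planar harmonic functions to see that the level set $\{u=u(x_0)\}$ consists of at least two arcs crossing at $x_0$; these arcs cannot reach $\partial\Om$ (else $u$ would attain $t_1$ or $t_2$ in the interior, contradicting the strong maximum/minimum principle) and cannot terminate inside $\Om$, so they close up into at least two simple closed curves, one of which bounds a subdomain $\Om'\Subset\Om$, forcing $u$ to be constant there --- a contradiction. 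One then still needs Lemma~\ref{lem:log} ($\Delta\log|\nabla u|\leq 0$ when $K\leq 0$), the minimum principle and the Hopf lemma to get the uniform lower bound $|\nabla u|\geq\mathrm{const}>0$ that legitimizes the computations of Lemma~\ref{lem: L''}. Without this your proof is incomplete, and the $C^{1,\alpha}$ hypothesis on $\partial\Om$ (needed for the Hopf lemma) plays no visible role in your write-up.

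On the rigidity statement you genuinely diverge from the paper. After correctly extracting $K\equiv 0$ and the constancy of $|\nabla u|$ on level curves from the equality conditions, the paper simply observes that $K\equiv 0$ reduces the problem to the planar case and invokes Theorem 3.1 of \cite{long} (see also \cite[Theorem 1.1]{al2}), whereas you sketch a self-contained ODE argument: $|\nabla u|=f(u)$ together with $\Delta\log|\nabla u|=0$ forces $f(u)=ce^{\mu u}$, whence the level curves have constant geodesic curvature. That route is attractive because it would make the rigidity self-contained, but as written it is only a sketch: you must still show that the flat annulus embeds isometrically as a planar domain (a flat topological annulus need not, a priori, be a subdomain of $\R^2$) and that nested circles of constant geodesic curvature arising as level sets of one harmonic function are in fact concentric. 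Either finish that argument carefully or take the paper's exit through the known Euclidean result.
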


\begin{rem}\label{rem-c1a-reg}
 The $C^{1,\alpha}$-regularity assumption on the boundaries of topological annuli in subject is a consequence of the interior ball condition assumed in the Hopf lemma used in the proof of Theorem~\ref{thm-main1}, cf.~\cite[3.71]{Aubin} and the discussion in~\cite[Chapter 3.2]{gt}.  It is known that the $C^{1,\alpha}$-regularity characterizes domains with both interior- and exterior- ball conditions. Notice further that \cite{al2} assumes the $C^{2,\alpha}$-regularity and \cite{mz} the $C^{\infty}$-regularity of the boundary. However, in the general setting of BIC surfaces, see Theorem~\ref{thm-reshetnyak} below, we may weaken the regularity assumption and prove Theorem \ref{thm-main1} assuming that $\Gamma_i$ for $i=1,2$ are merely Jordan curves (see Remark \ref{r: Jordan bdy}). In this section we decided to assume higher regularity on boundary components of annuli for two reasons: (1) it gives a simpler proof than the one in Theorem~\ref{thm-reshetnyak} based on geometric arguments and so it may be more appealing to the reader not interested in the BIC setting; (2) the $C^{1, \alpha}$-regularity assumption would be more natural in higher dimensions, and with different operators such as the $p$-Laplacian, and thus allows to approach generalizations of Theorem~\ref{thm-main1}.
 
\end{rem}

Before presenting the proof of Theorem~\ref{thm-main1} we would like to comment that an important part of its proof, as in Alessandrini's result~\cite[Theorem 2.1]{al2}, is showing that the gradient of harmonic function in subject does not vanish. In the manifold setting of space forms a similar observation is proven in~\cite[Prop 3.2]{mz}, but it relies on some other auxiliary results. Moreover, the proof of~\cite[Theorem 1.1]{al2} could be obtained in our case by the analytic approach via the uniformization result for annuli (see Lemma~\ref{unif-annulus}) and harmonic functions in the plane, see Appendix A. The proof below is self-contained and refers to simple geometric arguments instead.

\begin{proof}
 Let us notice that by the maximum and minimum principles for the harmonic function $u$ we have that $\max_{\overline{\Om}} u=t_2$ and 
 $\min_{\overline{\Om}} u=t_1$. Moreover, by the Hopf lemma one gets that $|\nabla u|\geq const>0$ on $\partial \Om$, see~\cite[3.71, pg. 96]{Aubin}.  
 
 We first show that  $|\nabla u|\geq const>0$ on $\Omega$. On the contrary, let us suppose that there exists $x_0\in \Om$ such that $\nabla u(x_0)=0$. Consider the corresponding level curve $\gamma=\{x\in \Om: u(x)=u(x_0)\}$. 
 
 \emph{Claim:} There exists at least two simple closed curves $\gamma'_i\subset \gamma$, $i=1,2$.
 
 \emph{Proof of the claim:} The discussion can be localized in a neighbourhood $U_{x_0}\subset\Omega$ of the point $x_0$. Therefore, we introduce isothermal coordinates $(x,y)$ induced by the conformal chart $\phi:U_{x_0}\to\R^2$  (cf. also the discussion in the proof of Lemma~\ref{lem:log}). Since the harmonicity of $u$ is a conformal invariant in dimension $2$, we have that $\Delta_0 u=\Delta u=0$, where $\Delta_0$ denotes the planar Laplacian in the coordinates $(x,y)$. Since $\phi(x_0)$ remains a critical point for $u\circ \phi^{-1}$, we know by the theory of planar harmonic functions that the level curve in the neighbourhood of $\phi(x_0)$ forms a finite family consisting of at least two arcs intersecting at $\phi(x_0)$.
 This follows from the analyticity of harmonic functions in $\R^2$: indeed, the Taylor expansion of a harmonic function in the neighbourhood of a critical point starts with terms of order determined by number of derivatives vanishing at this point, see~\cite{hawi} and~\cite[Section 2.1]{dur}. 
%\komT{I added some more explanation and references. Duren's book (Sect. 2.1) and mathoverflow have nice argument for the statement: https://mathoverflow.net/questions/45773/harmonic-level-sets-and-boundary-data}\komG{OK, thanks. Willie Wong's simple argument on Mathoverflow via the fact that the hessian is tracefree is illuminating, indeed}
Thus, for points on the level curve $\gamma$ we obtain, via $\phi^{-1}$, that there are at least two curves passing through $x_0$ contained in $\gamma$. If any of those branches would intersect $\partial \Om$, then by the assumption of continuity of $u$ up to the boundary, it would hold that $u(x_0)=t_1$ (or $u(x_0)=t_2$), hence the maximum of $u$ (or, respectively, minimum of $u$) would be attained in the interior of $\Om$, forcing $u=const$ by the strong maximum (respectively, minimum) principle. This is impossible, since $t_1\not =t_2$.
 
 Next, we rule out the possibility that the level curve $\gamma$ terminates at a point inside $\Om$. Indeed, suppose that there exists $y_0\in \gamma \cap \Om$, where $\gamma$ terminates, and consider two cases.

If $\nabla u(y_0)\not=0$, then the implicit function theorem implies that $\gamma$ can not terminate inside $\Om$, since it must be at least $C^1$ in a neighbourhood of $y_0$.

If $\nabla u(y_0)=0$, then by the discussion above, $\gamma$ would branch at $y_0$, contradicting assumption that it terminates there. 

To summarize, since $\gamma$ does not intersect $\partial \Om$ and does not terminate in $\Om$, it must contain at least two simple closed curve, denoted $\gamma'_i$, $i=1,2$, obtained by gluing regular curves (contained in $\gamma$). This ends the proof of the claim.

%Curve $\gamma'$ bounds a domain $\Om'\Subset \Om$ and the construction of $\gamma'$ together with the maximum and the minimum principles for $u$ imply that $u=u(x_0)$ on $\Om'$ (recall that we assumed above that $\nabla u(x_0)=0$). 
%\komG{There is something to be fixed here. For instance, if $\gamma'$ is homotopic to $\Gamma_1$ inside $\Omega$, then $\gamma'$ does not bound any domain $\Om$  contained properly in $\Om$. If $\gamma'$ is $C^{1,\alpha}$ regular (is it?) this latter situation is not possible due to the Hopf's maximum principle. More generally, since $\gamma$ branches ar $x_0$ it should be possible to prove that there exist two distinct closed curves contained in $\gamma'$, which thus would bound at least one domain}\komT{The level sets for harmonic functions are piecewise analytic (since are analytic away from critical points), but you are right that an argument must be given. Alessandrini in~\cite{al2} stays smoothly without any discussion that the modification of the argument from his 1985 paper  in Bull. Unione Ital. applies to annulus,  but his 1985-paper is for simply-connected domains. Nevertheless, I think he is correct:

% The branching at $x_0$ gives us at least two arcs of the level curve intersecting at $x_0$. 
Since none of the curves $\gamma'_1$ and $\gamma'_2$ touches the boundary of $\Omega$, a topological argument together with the maximum principle allow us to infer that at least one of them bounds a domain $\Om'\Subset \Om$. Hence, we conclude that $\nabla u\not=0$ in $\Om$. 
%%Indeed, suppose not, then $\gamma'=\gamma'_1 \cup \gamma'_2$ splits $\Om$ into two annuli contained in $\Om$. By applying the maximum principle to both annuli we obtain that $u\not= u(x_0)$ outside of $\gamma'$ ($u$ is either strictly bigger of strictly smaller than $u(x_0)$ outside of $\gamma'$). Therefore, $\gamma'_1=\gamma'_2=\gamma$ contradicting the assumption on branching at $x_0$.
% \komG{OK, this argument is fine. Actually, once we have proved that $\gamma$ contains at least two closed curves, the fact that $\gamma$ bounds a domain $\Omega'$ is quite intuitive by a topological argument}
% 
%Now, since $\partial \Om'$ is a part of level curve $\gamma$, we have that on $\overline{\Om}\setminus \overline{\Om'}$ either $u\geq u(x_0)$ or $u\leq u(x_0)$, contradicting  facts that $u$ must attain both values $t_1$ and $t_2$ in $\overline{\Om}$.   

We can thus apply Lemma~\ref{lem:log} obtaining that $\Delta (\log|\nabla u|)\leq 0$, and so the minimum principle together with the Hopf lemma implies that
$ \min_{\Om} |\nabla u|\ge \min _{\partial \Om} |\nabla u|>0$. 
%\komT{After checking Alessandrini~\cite{al2} I am inclined to believe that the uniform lower bound on $\bar{\Om}$ for $|\nabla u|$ is needed in the isoperimetric part of his result, i.e. that the equality $(ln L)''=0$ implies that that $\Om$ is a circular annulus. 
%However, I think that since we argued in Remark~\ref{rem-c1a-reg} that in some cases we preffer $C^{1,\alpha}$-regularity, we could leave thm. 2.6 and its proof as they are. 
%}\komG{Ok, it makes sense}

We are in a position to complete the proof of the first part of the theorem.  By Lemma~\ref{lem: L''} and the Cauchy--Schwarz inequality we have that
\begin{align}
(L'(t))^2&\leq L(t) \int_{\{x\in \Om\,:\,u(x)=t\}} \left|\left \langle -\frac{\nabla u}{|\nabla u|}, \frac{ \nabla |\nabla u|}{|\nabla u|^2} \right \rangle\right|^2 {\rm d}\mathcal{H}^1 \nonumber \\
	 &\leq L(t) \int_{\{x\in \Om\,:\,u(x)=t\}} \frac{ |\nabla |\nabla u||^2}{|\nabla u|^4} {\rm d}\mathcal{H}^1 \label{step-main-thm1} \\
	 &\leq L(t)L''(t). \nonumber
\end{align}
	 In order to show the second part of the assertion, suppose that $(\ln L(t))''=0$. This is equivalent to $LL''=(L')^2$ which then by the Cauchy--Schwarz and the H\"older inequalities reads
\begin{align*}
L(t) \left(\int_{\{x\in \Om\,:\,u(x)=t\}} \frac{\big|\nabla |\nabla u|\big|^2}{|\nabla u|^4} - \frac{K}{|\nabla u|^2} {\rm d}\mathcal{H}^1\right)&=\left(\int_{\{x\in \Om\,:\,u(x)=t\}} \left \langle -\frac{\nabla u}{|\nabla u|}, \frac{ \nabla |\nabla u|}{|\nabla u|^2} \right \rangle {\rm d}\mathcal{H}^1\right)^2\\
& \leq L(t)\left(\int_{\{x\in \Om\,:\,u(x)=t\}} \frac{ |\nabla |\nabla u||^2}{|\nabla u|^4}{\rm d}\mathcal{H}^1\right)
\end{align*}
Since $K\leq 0$, this inequality may hold only when $K\equiv 0$ in which case we reduce the discussion to the planar case and so Theorem 3.1 in~\cite{long} gives the second assertion of the theorem (see also \cite[Theorem 1.1]{al2}). 
%Note, that this justifies the choice of the $C^1,\alpha$ boundary in the previous theorem.}
\end{proof}

Next we prove that the assertion of Theorem~\ref{thm-main1} in fact characterizes open surfaces with negative curvature.

\begin{theorem}\label{thm-main2}
	
Let $(M^2,g)$ be an open surface. Then $K\leq 0$ in $M^2$ if and only if for every topological annulus domain $\Omega$ in $M$ with $C^{1,\alpha}$-boundary, and every real constants $t_1<t_2$ the following property is satisfied: 
 given the continuous up to the boundary harmonic solution $u$ of the Dirichlet problem~\eqref{DP} 
%	\[
%	\begin{cases}
%	\Delta_{M} u=0 & \hbox{in } \Om,\\
%	u|_{\Gamma_1}=t_1, & u|_{\Gamma_2}=t_2,
%	\end{cases}
%	\]
it holds $(\ln L(t))''\geq 0$ for all $t\in (t_1, t_2)$.
\end{theorem}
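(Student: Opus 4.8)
The plan is to prove the two implications separately. The forward direction — $K\le 0$ on $M^2$ implies $(\ln L(t))''\ge 0$ for every $C^{1,\alpha}$ topological annulus and all data $t_1<t_2$ — is nothing but Theorem~\ref{thm-main1} applied on each such annulus, so I would simply invoke it. For the converse I would argue by contraposition: assuming $K(p)>0$ at some $p\in M^2$, I want to exhibit a $C^{1,\alpha}$ topological annulus and boundary data for which $(\ln L)''<0$ somewhere. The test domains are the shrinking geodesic annuli $\Omega_r:=\{\,r<d(\cdot,p)<2r\,\}$ for small $r$ (smooth topological annuli once $2r<\mathrm{inj}(p)$), with boundary values $t_1=0$, $t_2=1$. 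Passing to geodesic normal coordinates at $p$, where $d(\cdot,p)=|x|$ and $g_{ij}(x)=\delta_{ij}-\frac13K(p)\big(|x|^2\delta_{ij}-x^ix^j\big)+o(|x|^2)$, the dilation $x\mapsto y:=x/r$ identifies $(\Omega_r,g)$ isometrically with the \emph{fixed} annulus $A:=\{\,1<|y|<2\,\}$ equipped with $\hat g_r(y):=g(ry)=\delta+r^2h+o(r^2)$ in $C^2(A)$ (with $\delta$ the Euclidean metric and $h_{ij}(y)=-\frac13K(p)(|y|^2\delta_{ij}-y^iy^j)$), whose Gauss curvature is $\hat K_r=r^2K(p)+o(r^2)$ uniformly; the harmonic solution becomes the $\hat g_r$-harmonic $\hat u_r$ with values $0$ and $1$ on the circles of $A$, and $L_r(t)=r\,\hat L_r(t)$, so $(\ln L_r(t))''=(\ln\hat L_r(t))''$.

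Next, using the second-derivative formula of Lemma~\ref{lem: L''} together with $(\ln\hat L_r)''=(\hat L_r\hat L_r''-(\hat L_r')^2)/\hat L_r^2$, I would decompose
\[
(\ln\hat L_r(t))''=\frac{1}{\hat L_r(t)^2}\left(Q_r(t)-\hat L_r(t)\int_{\{\hat u_r=t\}}\frac{\hat K_r}{|\nabla\hat u_r|^2}\,{\rm d}\mathcal{H}^1\right),
\]
\[
Q_r(t):=\hat L_r(t)\int_{\{\hat u_r=t\}}\frac{\big|\nabla|\nabla\hat u_r|\big|^2}{|\nabla\hat u_r|^4}\,{\rm d}\mathcal{H}^1-\big(\hat L_r'(t)\big)^2\ \ge\ 0,
\]
the nonnegativity of $Q_r$ being exactly the Cauchy--Schwarz step already used inside the proof of Theorem~\ref{thm-main1}. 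The key observation is that $Q_r$ vanishes identically for $r=0$: on the flat annulus $\hat u_0=\ln|y|/\ln2$, and both $|\nabla\hat u_0|$ and $\big|\nabla|\nabla\hat u_0|\big|/|\nabla\hat u_0|^2$ are constant along each level circle $\{|y|=2^t\}$, so that whole Cauchy--Schwarz chain is a chain of equalities. Since $Q\ge 0$ for every metric near $\delta$ while $Q=0$ at $\delta$, the flat metric minimizes $Q(t)$; hence its first variation vanishes, and because $\hat g_r-\delta=O(r^2)$ in $C^2$ this forces $Q_r(t)=o(r^2)$. A short computation on the explicit model $\hat u_0$, on the other hand, gives $\hat L_r(t)^2=4\pi^2 2^{2t}+o(1)$ and $\hat L_r(t)\int_{\{\hat u_r=t\}}|\nabla\hat u_r|^{-2}\hat K_r\,{\rm d}\mathcal{H}^1=4\pi^2(\ln2)^2\,2^{4t}K(p)\,r^2+o(r^2)$, and therefore
\[
(\ln L_r(t))''=-(\ln2)^2\,2^{2t}\,K(p)\,r^2+o(r^2)<0
\]
for each fixed $t\in(0,1)$ once $r$ is small. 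This contradicts the assumed log-convexity, so $K\le 0$ on all of $M^2$.

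The hard part — the only step that is not bookkeeping — is making rigorous the claim that the Cauchy--Schwarz defect $Q_r(t)$ is of strictly lower order in $r$ than the curvature term. I would do this in one of two equivalent ways: abstractly, by observing that $\hat u_r$, and hence $\hat L_r$, $\hat L_r'$ and $Q_r$, depend differentiably on the metric $\hat g_r$ (standard elliptic dependence on coefficients, legitimate because $|\nabla\hat u_r|$ stays uniformly bounded away from $0$ for small $r$, so the level sets remain smooth curves), together with the elementary fact that a differentiable nonnegative functional which vanishes at a point has vanishing differential there; or concretely, by expanding $\hat u_r=\hat u_0+r^2w+o(r^2)$ with $w$ the solution of the linearized Dirichlet problem and checking directly that the $O(r^2)$ contributions to $Q_r(t)$ cancel, as they must by that same nonnegativity. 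Everything else reduces to the flat-model computations indicated above, and one should also record that for small $r$ the annulus $\Omega_r$ is a genuine $C^{1,\alpha}$ (indeed $C^\infty$) topological annulus to which Lemma~\ref{lem: L''} and Theorem~\ref{thm-main1} legitimately apply.
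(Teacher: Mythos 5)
Your forward implication is handled exactly as in the paper (quote Theorem~\ref{thm-main1}), but your converse takes a genuinely different route. The paper also tests against the explicit function $-\ln|y|$ on small round annuli around a point where $K(x_0)>0$, but it sets this up in an \emph{isothermal} chart normalized so that $g=\lambda^2\,dy^2$ with $\lambda(0)=1$, $\nabla\lambda(0)=0$. The decisive payoff of that choice is that harmonicity is conformally invariant in dimension $2$, so $-\ln|y|$ is \emph{exactly} the solution of the Dirichlet problem on the round annuli for every such metric: the level sets are genuine Euclidean circles, only the length element $\lambda$ varies, and $LL''-(L')^2$ is computed by a direct Taylor expansion of $\lambda$ combined with $\int_0^{2\pi}\nabla^2\lambda(0)(\theta,\theta)\,d\theta=\pi\Delta\lambda(0)$ and $\Delta\lambda(0)=-K(x_0)$. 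There is no perturbation of the solution to control and hence no analogue of your Cauchy--Schwarz defect $Q_r$. By contrast, your geodesic-normal-coordinate blow-up forces you to track how the harmonic function itself responds to the $O(r^2)$ metric perturbation, and the step you rightly single out as the hard part --- that $Q_r(t)=o(r^2)$ because the nonnegative, differentiable functional $g\mapsto Q(g)$ vanishes (hence has vanishing differential) at the flat metric --- is precisely the work the paper's conformal normalization makes unnecessary. Your first-variation argument is sound in principle (the Cauchy--Schwarz chain is indeed a chain of equalities for $\hat u_0=\ln|y|/\ln 2$, and $|\nabla\hat u_r|$ stays bounded away from zero for small $r$), but to close it you must actually establish Fr\'echet differentiability of $g\mapsto \hat u_g$ in a topology strong enough to control the second derivatives appearing in $\nabla|\nabla \hat u_g|$ and the motion of the level sets; that is standard Schauder/implicit-function-theorem material but is real additional machinery. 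Your leading-order computation $(\ln L_r(t))''=-(\ln 2)^2 2^{2t}K(p)r^2+o(r^2)$ is consistent with the paper's $LL''-(L')^2=-4\pi^2e^{-4t}K(x_0)+o(e^{-4t})$. In short: both proofs are correct in strategy; yours is more robust conceptually (it would survive in settings without isothermal coordinates) but analytically heavier, while the paper's exploits two-dimensional conformal invariance to make the whole converse an elementary explicit computation.
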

 
 The $C^{1,\alpha}$-regularity assumption on boundary of annuli follows from the analogous assumption in Theorem~\ref{thm-main1}, see Remark~\ref{rem-c1a-reg} following the statement of that theorem.

\begin{proof}
The sufficiency of the condition $K\leq 0$ is proven in Theorem~\ref{thm-main1}. In order to show its necessity, we will argue by contradiction and suppose that there exists a point $x_0\in M$ where $K(x_0)>0$. This will imply that there exist a harmonic function and an annular domain $\Om$, determined at the end of this proof, with $(\ln L(t))''< 0$.

By the Korn-Lichtenstein theorem, we can choose an isothermal chart $(U,\phi)$ where $U$ is an open set of $M$ containing $x_0$ and $\phi:U\to V:=\phi(U)\subset \R^2$ is a diffeomorphism such that $\phi(x_0)=0$. The metric $g$ pulled back to this coordinate system writes $(\phi^{-1})^\ast g(y)=\lambda^2(y)dy^2$ for some smooth conformal factor $\lambda:V\to (0,\infty)$. Moreover, up to a composition with a linear isometry of $\R^2$, we can suppose that $\lambda(0)=1$ and $\nabla\lambda(0)=0$. In particular, 
\begin{equation*}
\lambda(y)=1+\sum_{i,j=1}^2A_{ij}y_iy_j+o(|y|^2),
\end{equation*}
where $2A$ is the Hessian matrix $\nabla^2\lambda (0)$ written in this coordinate system. 
For later purposes, we compute 
\[
\partial_r\lambda(y)=\nabla\lambda (y) \cdot \frac y{|y|} = \frac{2}{|y|} y^TAy + o(|y|) = \nabla^2 \lambda(0) \left(y,\frac y{|y|}\right)+ o(|y|),
\]
where $\displaystyle \lim_{y\to 0}\frac {o(|y|)}{|y|}=0$. Similarly,
\[
\partial^2_{rr}\lambda(y)=\nabla(\partial_r\lambda(y)) \cdot \frac y{|y|} = \frac{2}{|y|^2} y^TAy + o(1)= \nabla^2 \lambda(0) \left(\frac y{|y|},\frac y{|y|}\right)+ o(1).
\]
%\komT{I think we need to provide here the brief discussion explaining why we can reduce the presentation to function $ln|y|$. Below, I repeat partially arguments from the section on BIC surfaces.
%}\kom{
%Let $\Om$ be a annular domain in $M$ satisfying the assumptions of the theorem whose exact defintion will be specified at the end of the proof. By the uniformization result for annuli, see Lemma~\ref{unif-annulus} we may reduce the discussion to the planar annulus. Moreover, let $u$ be the harmonic solution of \eqref{DP} as in the assumptions of the theorem. Up to a vertical shift, we can assume that $t_1=0$ and up to a multiplicative constant we may set  $t_2 = -\ln R+t_1=(1-t_1/\ln R)(-\ln R)\approx -\ln R$ (note that both these transformations do not affect the conclusion of the theorem). Therefore, without the loss of generality, we can suppose that $u(y)=-\ln |y|$ solves~\eqref{DP} with boundary data $0$ and $-\ln R$.
%}
%%Now, let $u$ be the two-dimensional Euclidean Green kernel, that is $u(y)=-\ln|y|$. We have that $\partial^2_{11} u + \partial^2_{22} u=0$, and
Let us consider on $V\setminus \{0\}$ the (Euclidean) harmonic function $-\ln(|y|)$, and observe that $x\mapsto -\ln(|\phi(x)|)$ is harmonic on $(U,g)$ by the conformal invariance of the harmonicity on surfaces.
Using polar coordinates in $\R^2$, we express
\[\begin{aligned}
L(t)&=\int_{\{u=t\}}\lambda\,d\mathcal H^1=\int_{\partial B_{e^{-t}}(0)} \lambda\,d\mathcal H^1=
e^{-t}\int_{0}^{2\pi}\lambda (e^{-t},\theta) \,d\theta,\\
L'(t)&=-e^{-t}\int_{0}^{2\pi}\lambda (e^{-t},\theta) \,d\theta - e^{-2t}\int_{0}^{2\pi}\partial_r\lambda (e^{-t},\theta)\, d\theta,\\
L''(t)&=e^{-t}\int_{0}^{2\pi}\lambda (e^{-t},\theta) \,d\theta +3 e^{-2t}\int_{0}^{2\pi}\partial_r\lambda (e^{-t},\theta)\, d\theta +  e^{-3t}\int_{0}^{2\pi}\partial^2_{rr}\lambda (e^{-t},\theta)\, d\theta.
\end{aligned}
\] 
By the asymptotic behaviour of $\lambda$, we compute
\[
\begin{aligned}
\int_{0}^{2\pi}\lambda (e^{-t},\theta) \,d\theta &= 2\pi+\frac12e^{-2t}\int_{0}^{2\pi}  \nabla^2 \lambda(0)(\theta,\theta) \,d\theta + o(e^{-2t})\\
\int_{0}^{2\pi}\partial_r\lambda (e^{-t},\theta)\, d\theta &= e^{-t}\int_{0}^{2\pi}  \nabla^2 \lambda(0)(\theta,\theta) \,d\theta + o(e^{-t})\\
\int_{0}^{2\pi}\partial^2_{rr}\lambda (e^{-t},\theta)\, d\theta &=  \int_{0}^{2\pi}  \nabla^2 \lambda(0)(\theta,\theta) \,d\theta + o(1)
\end{aligned}\]
as $t\to\infty$. In the equalities above, with an abuse of notation we write $\theta$ for a point of polar coordinates $(1,\theta)$ when $\theta$ appears as an argument of $\nabla^2\lambda$. By inserting the latter equations in the previous relations, we obtain that
\begin{equation}\label{eq-main-thm2}
(L(t))^2(\ln L(t))'' = L(t)L''(t)- (L'(t))^2 = 4\pi e^{-4t}\int_{0}^{2\pi}  \nabla^2 \lambda(0)(\theta,\theta) \,d\theta + o(e^{-4t}).
\end{equation}
Now, we note the following facts:
\begin{itemize}
	\item by a direct computation, $\int_{0}^{2\pi}  \nabla^2 \lambda(0)(\theta,\theta) \,d\theta = \pi\Delta\lambda (0)$,
	\item by the conformal change formula for the Gaussian curvature we have $\Delta(\ln\lambda)=-\lambda^2K$, so that $\Delta\lambda(0)=-K(x_0)$.
\end{itemize}
Recall that we assumed $K(x_0)>0$, and thus obtain $(\ln L(t_0))''\leq 0$ for $t_0$ large enough by~\eqref{eq-main-thm2}.
% \komT{Is it always possible to choose such $t_0$? It seems that the argument requires letting $t\to\infty$, but for us $t$ is bounded by $t_1$ and $t_2$.}\komG{We have not yet chosen $t_1$ and $t_2$}
 Choosing $t_1<t_0<t_2$, $\Omega:= \phi^{-1}(B_{e^{-t_1}}(0)\setminus B_{e^{-t_2}}(0))$ and $u=-\ln(|\phi(x)|)$ gives the contradiction.  
\end{proof}

%%\komT{As for the comment below: if you approve my proposition regarding your comment 2.10 (see my comment 2.11 and 2.13), then basing on your comment below, I will prepare a corresponding proposition for ``if and only if'' observation for $n\geq 2$, positive solution to \ref{DP} and $-k_1(n-1)\leq \Ric \leq k\leq 0$ for some $k_1\geq 0$.}
%%\komG{ In a previous remark 1.12 we noticed that  if $K=\Ric\leq k$, then the equation satisfied by $L$ is 	 \begin{align*}
%%	(L'(t))^2&\leq 
%%	 L(t)\left(L''(t)+k\int_{u=t}\frac{1}{|\nabla u|^2}\, d\mathcal H^1\right).
%%	\end{align*}
%%Note that also for this refined version it holds an if and only if. Indeed, from the above proof we obtain that 
%%\[
%%L(t)L''(t)- (L'(t))^2 = - 4\pi^2 e^{-4t}K(x_0).
%%\]
%%On the other hand $|\nabla u| (e^{-t},\theta)= \lambda (e^{-t},\theta) e^{t}$, so that 
%%\[
%%\int_{u=t}\frac{1}{|\nabla u|^2}\, d\mathcal H^1 =
%%e^{-t} \int_{0}^{2\pi} \frac{e^{-2t}}{\lambda^2 (e^{-t},\theta)}\,d\theta=
%% 2\pi e^{-3t} + o(e^t) .
%%\]
%%Hence 
%%\[
%%L(t)L''(t)- (L'(t))^2 \geq - kL(t)\int_{u=t}\frac{1}{|\nabla u|^2}\, d\mathcal H^1
%% =
%%-k 4\pi^2 e^{-4t} + o(e^{-4t}) .
%%\]
%%This gives a contradiction if $K(x_0)>k$ at some point $x_0$.
%%}
%\begin{theorem}
%a
%\end{theorem}
 Theorem~\ref{thm-main2} can be further refined if the Gauss curvature satisfies $K\leq \kappa\leq 0$. 
\begin{prop}\label{prop sharp K}
Let $(M^2,g)$ be an open surface. Then $K\leq \kappa \leq 0$ in $M^2$  
%%\komG{Is $\kappa<0$ really necessary?} \komT{For the sufficiency part we refer to Thm~\ref{thm-main1}. There the assumption that $K\leq 0$ is used in the first part of the proof to conclude lack of the critical points.}\komG{Well, but we have now understood that in the proof of Thm~\ref{thm-main1} we do not really need to use Lemma \ref{lem:log}. The lack of critical points is enough, and we do not need the control $ \min_{\Om} |\nabla u|\ge \min _{\partial \Om} |\nabla u|>0$. We could remove the sentence in the proof of Thm \ref{thm-main1} where Lemma \ref{lem:log} (although in this case Lemma \ref{lem:log} would be no more used)} 
if and only if for every topological annulus domain $\Omega$ in $M$ with $C^{1,\alpha}$-boundary, and every real constants $t_1<t_2$ the following property is satisfied: 
 given the continuous up to the boundary harmonic solution $u$ of the Dirichlet problem~\eqref{DP} it holds that
 \begin{align}\label{est sharp K}
	(\ln L(t))''&\geq 
	 -\frac{\kappa }{L(t)}\int_{\{u=t\}}\frac{1}{|\nabla u|^2} d\mathcal H^1,\quad \hbox{for } t_1<t<t_2.
	\end{align}
%\begin{align}
%	(L'(t))^2&\leq 
%	 L(t)\left(L''(t)+\kappa \int_{\{u=t\}}\frac{1}{|\nabla u|^2} d\mathcal H^1\right),\quad \hbox{for } t_1<t<t_2.
%	\end{align}
\end{prop}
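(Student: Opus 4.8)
The plan is to establish the two implications separately, in each case recycling the machinery already developed for Theorems~\ref{thm-main1} and~\ref{thm-main2}: for the ``if'' direction one merely keeps the curvature term that was discarded in the Cauchy--Schwarz chain leading to Theorem~\ref{thm-main1}, while for the ``only if'' direction one pushes the asymptotic expansion near a point of positive curvature one order further than in Theorem~\ref{thm-main2}. Throughout I treat $\kappa\le 0$ as a fixed parameter, the actual content being the equivalence between $K\le\kappa$ on $M^2$ and the validity of~\eqref{est sharp K} for all admissible ($C^{1,\alpha}$ topological) annuli and boundary data.

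\textbf{Sufficiency.} Assume $K\le\kappa\le 0$. Since in particular $K\le 0$, I would first reuse the non-vanishing-gradient argument from the proof of Theorem~\ref{thm-main1} (the branching analysis of the level curve through a hypothetical critical point, together with the maximum principle and the Hopf lemma) to get $|\nabla u|>0$ on $\Om$, so that Lemma~\ref{lem: L''} applies and furnishes the $2$-dimensional formulas~\eqref{L-der1} and~\eqref{L-der2}. Then, exactly as in~\eqref{step-main-thm1}, the pointwise bound $\big|\big\langle -\nabla u/|\nabla u|,\ \nabla|\nabla u|/|\nabla u|^2\big\rangle\big|\le |\nabla|\nabla u||/|\nabla u|^2$ and the integral Cauchy--Schwarz inequality yield $(L'(t))^2\le L(t)\int_{\{u=t\}}|\nabla|\nabla u||^2|\nabla u|^{-4}\,d\mathcal{H}^1$. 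Subtracting this from $L(t)L''(t)$ as computed from~\eqref{L-der2}, and using the pointwise inequality $-K\ge-\kappa\ge 0$ together with $|\nabla u|^{-2}>0$, one obtains
\[
L(t)L''(t)-(L'(t))^2\ \geq\ -L(t)\int_{\{u=t\}}\frac{K}{|\nabla u|^2}\,d\mathcal{H}^1\ \geq\ -\kappa\, L(t)\int_{\{u=t\}}\frac{1}{|\nabla u|^2}\,d\mathcal{H}^1 ,
\]
and dividing by $L(t)^2>0$ gives~\eqref{est sharp K}.

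\textbf{Necessity.} I would argue by contradiction, assuming~\eqref{est sharp K} for every admissible annulus but $K(x_0)>\kappa$ at some $x_0\in M^2$. As in the proof of Theorem~\ref{thm-main2}, choose an isothermal chart $(U,\phi)$ at $x_0$, normalize the conformal factor so that $\lambda(0)=1$ and $\nabla\lambda(0)=0$, and take the harmonic test function $u=-\ln|\phi(\cdot)|$. The computation~\eqref{eq-main-thm2}, combined with $\int_0^{2\pi}\nabla^2\lambda(0)(\theta,\theta)\,d\theta=\pi\Delta\lambda(0)=-\pi K(x_0)$, already supplies $(L(t))^2(\ln L(t))''=-4\pi^2 K(x_0)e^{-4t}+o(e^{-4t})$ as $t\to\infty$. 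It remains to expand the right-hand side of~\eqref{est sharp K} to the same order: on $\{u=t\}=\phi^{-1}(\partial B_{e^{-t}}(0))$ one has $|\nabla u|_g=\lambda^{-1}|\nabla u|_0=(\lambda|y|)^{-1}=e^{t}/\lambda$ and $d\mathcal{H}^1=\lambda\,d\mathcal{H}^1_0=\lambda e^{-t}\,d\theta$, so $\int_{\{u=t\}}|\nabla u|^{-2}\,d\mathcal{H}^1=e^{-3t}\int_0^{2\pi}\lambda^3(e^{-t},\theta)\,d\theta=2\pi e^{-3t}+o(e^{-3t})$ and $L(t)=2\pi e^{-t}+o(e^{-t})$, whence $-\kappa L(t)\int_{\{u=t\}}|\nabla u|^{-2}\,d\mathcal{H}^1=-4\pi^2\kappa e^{-4t}+o(e^{-4t})$. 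Multiplying~\eqref{est sharp K} by $(L(t))^2>0$, comparing the two expansions at the common order $e^{-4t}$, dividing by $4\pi^2e^{-4t}$ and letting $t\to\infty$ forces $K(x_0)\le\kappa$, a contradiction. Concretely, fixing $t_1<t_0<t_2$ with $t_0$ large and setting $\Om:=\phi^{-1}\big(B_{e^{-t_1}}(0)\setminus\overline{B_{e^{-t_2}}(0)}\big)$, $u=-\ln|\phi(\cdot)|$, exhibits an annulus and a harmonic function for which~\eqref{est sharp K} fails at $t=t_0$, which is the contradiction sought.

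\textbf{Main obstacle and a caveat.} There is no genuinely difficult step; the points demanding care are the correct identification of the Riemannian gradient length $|\nabla u|_g$ and of the arc-length element in the conformal coordinates --- needed so that $\int_{\{u=t\}}|\nabla u|^{-2}\,d\mathcal{H}^1$ is expanded precisely to order $e^{-3t}$ --- and the bookkeeping ensuring that both sides of~\eqref{est sharp K} are compared at the common order $e^{-4t}$ after multiplication by $(L(t))^2$. I would also flag that the hypothesis $\kappa\le 0$ cannot be dropped from the equivalence: if $K\le 0$ on $M^2$ then, being weaker than $(\ln L)''\ge 0$, the inequality~\eqref{est sharp K} holds for every $\kappa\ge 0$ as well, so the necessity argument only recovers the pointwise bound $K\le\kappa$.
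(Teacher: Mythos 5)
Your proof is correct and follows essentially the same route as the paper: sufficiency by keeping the curvature term of \eqref{L-der2} in the Cauchy--Schwarz chain \eqref{step-main-thm1} and using $-K\ge-\kappa$ pointwise, and necessity by the asymptotic expansion \eqref{eq-main-thm2} around a point with $K(x_0)>\kappa$ combined with $\int_{\{u=t\}}|\nabla u|^{-2}\,d\mathcal H^1=2\pi e^{-3t}+o(e^{-3t})$. Your identification $|\nabla u|_g=e^{t}/\lambda$ is the correct one (the paper's proof writes $\lambda e^{t}$, apparently a slip), but both yield the same leading-order term, so the two arguments coincide.
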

\begin{rem}
The above proposition is sharp at least in the case of surfaces with constant curvature. In Example~\ref{ex hyperb sf} below, we construct a harmonic function $u$ on a hyperbolic surface for which the equality holds in~\eqref{est sharp K}.
\end{rem}
\begin{proof}
For the sufficiency part of the proposition, notice that the assertion follows by repeating the steps of the proof for Theorem~\ref{thm-main1} and by using equation~\eqref{L-der2} in ~\eqref{step-main-thm1}.

 In order to show the necessity of the assertion let us follow the steps of the corresponding part of the proof for Theorem~\ref{thm-main2} and argue by contradiction. Then, equation~\eqref{eq-main-thm2} together with facts following it allows us to obtain that 
\[
L(t)L''(t)- (L'(t))^2 = - 4\pi^2 e^{-4t}K(x_0).
\]
On the other hand $|\nabla u| (e^{-t},\theta)= \lambda (e^{-t},\theta) e^{t}$, so that 
\[
\int_{u=t}\frac{1}{|\nabla u|^2}\, d\mathcal H^1 =
e^{-t} \int_{0}^{2\pi} \frac{e^{-2t}}{\lambda^2 (e^{-t},\theta)}\,d\theta=
 2\pi e^{-3t} + o(e^t) .
\]
Hence, 
\[
L(t)L''(t)- (L'(t))^2 \geq - \kappa L(t)\int_{u=t}\frac{1}{|\nabla u|^2}\, d\mathcal H^1
 =
-\kappa  4\pi^2 e^{-4t} + o(e^{-4t}) .
\]
This gives a contradiction if $K(x_0)>\kappa $ at some point $x_0\in M^2$.
\end{proof}

It turns out that the inequality in Theorem~\ref{thm-main1} can be quantified in the setting of surfaces with pinched curvature, provided that the harmonic function defined on the annular domain is the restriction of a globally defined function which is harmonic on the whole surface.

We first prove the following proposition and then discuss an example of class of surfaces and harmonic functions satisfying the assumptions of the proposition.
%\komT{First, I formulated one proposition as "if and only if" theorem, but I realized that it is not full iff result, as we are not able to deduce by contradiction, as in the proposition above, that $-\kappa _1\leq K$, only that $K\leq \kappa \leq 0$, so I split the discussion into two propositions.}\komG{OK}
\begin{prop}\label{thm-main2-2}
Let $(M^2,g)$ be a complete non-compact surface whose Gauss curvature satisfies 
\begin{equation}\label{assm-thm-main2-2}
-\kappa _1\leq K \leq -\kappa_2 \leq 0,
\end{equation}
for some $\kappa _1\geq \kappa_2\ge 0$ and let $u>0$ be harmonic on $M^2$. Then for every topological annulus domain $\Omega$ in $M$ with $C^{1,\alpha}$-boundary, and such that $u$ takes constant values $0<t_1<t_2$ on the boundary components of $\Om$, it holds that 
\[
 (\ln L(t))''\geq \frac{\kappa_2 }{\kappa _1}\frac{1}{t^2},\quad \hbox{for } t_1<t<t_2.
\]
\end{prop}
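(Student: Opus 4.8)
The plan is to combine the integral formula for $L''$ from Lemma~\ref{lem: L''} with the curvature pinching and a gradient estimate coming from the positivity of $u$. Starting from \eqref{L-der2} and the Cauchy--Schwarz step \eqref{step-main-thm1} as in the proof of Theorem~\ref{thm-main1}, we get
\[
(\ln L(t))''=\frac{L L''-(L')^2}{L^2}\geq \frac{1}{L(t)}\int_{\{u=t\}}\frac{-K}{|\nabla u|^2}\,{\rm d}\mathcal H^1\geq \frac{\kappa_2}{L(t)}\int_{\{u=t\}}\frac{1}{|\nabla u|^2}\,{\rm d}\mathcal H^1,
\]
using $K\leq -\kappa_2$. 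So the task reduces to a lower bound on $\int_{\{u=t\}}|\nabla u|^{-2}\,{\rm d}\mathcal H^1$ in terms of $t$. The natural idea is to feed in a pointwise upper gradient estimate of the form $|\nabla u|\leq C\, u$ with a constant $C$ depending only on the lower curvature bound $-\kappa_1$; this is a Cheng--Yau-type (or Yau's) gradient estimate for positive harmonic functions on a complete manifold with $\Ric\geq -\kappa_1$ (in dimension $2$, $\Ric=K g$, so $K\geq -\kappa_1$ means $\Ric\geq -\kappa_1 g$). The sharp constant in the two-dimensional Yau estimate is $|\nabla\ln u|\leq \sqrt{\kappa_1}$, i.e. $|\nabla u|\leq \sqrt{\kappa_1}\,u$; I would cite this (e.g. via Cheng--Yau / the reference Li--Wang already used in the paper, cf.\ \cite{lw}).

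Granting $|\nabla u|\leq\sqrt{\kappa_1}\,u$, on the level set $\{u=t\}$ we have $|\nabla u|^{-2}\geq (\kappa_1 t^2)^{-1}$ pointwise, hence
\[
\frac{1}{L(t)}\int_{\{u=t\}}\frac{1}{|\nabla u|^2}\,{\rm d}\mathcal H^1\geq \frac{1}{L(t)}\cdot\frac{L(t)}{\kappa_1 t^2}=\frac{1}{\kappa_1 t^2},
\]
and combining with the display above yields $(\ln L(t))''\geq \dfrac{\kappa_2}{\kappa_1}\dfrac{1}{t^2}$, as claimed. One should also note that the hypotheses of Lemma~\ref{lem: L''} (namely $|\nabla u|>0$ on $\Om$) are in force: since $u>0$ is harmonic and non-constant on $M^2$ and restricts to an annulus with distinct boundary constants $t_1<t_2$, the argument from the proof of Theorem~\ref{thm-main1} (the branching/topological analysis of level curves together with the maximum principle) applies verbatim to show $\nabla u\neq 0$ on $\Om$.

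The main obstacle is pinning down the correct sharp constant in the gradient estimate and making sure the estimate is applicable in exactly this setting — it requires $u$ to be globally defined and positive on the \emph{complete} surface $M^2$, which is precisely why the proposition assumes this (and the analogous restriction is not available in Theorem~\ref{thm-main1}); this is the point to be careful about. Everything else is either a direct citation or a repetition of computations already carried out in the proofs of Theorems~\ref{thm-main1} and~\ref{thm-main2}. A secondary point worth a remark is sharpness: for the hyperbolic plane ($\kappa_1=\kappa_2=1$) with $u$ a suitable positive harmonic function, both the Yau estimate and the Cauchy--Schwarz step are equalities, so the bound $(\ln L(t))''\geq 1/t^2$ is attained, paralleling the sharpness discussion after Proposition~\ref{prop sharp K}.
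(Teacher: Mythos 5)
Your proposal is correct and follows essentially the same route as the paper: the paper invokes Proposition~\ref{prop sharp K} (which is exactly your Cauchy--Schwarz step applied to \eqref{L-der2}) and then the gradient estimate $|\nabla u|^2\leq \kappa_1 u^2$ for positive harmonic functions under $K\geq -\kappa_1$, quoted from \cite{pli} rather than Cheng--Yau but with the same constant, before substituting $u=t$ on the level set. (Only your closing sharpness aside is shaky --- in Example~\ref{ex hyperb sf} one has $|\nabla u|=\sin u< u$, so the gradient estimate is strict and equality in this proposition is not attained there --- but that remark is not part of the proof.)
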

\begin{proof} Let $u>0$ be harmonic in $M^2$ satisfying~\eqref{assm-thm-main2-2} and $\Om\Subset M^2$ be an annulus such that $u$ takes constant values, respectively $t_1$ and $t_2$, on the boundary components of $\Om$, cf.~\eqref{def-top-ring}. By Proposition \ref{prop sharp K}
\begin{align}\label{com-est}
	(L'(t))^2&\leq 
	 L(t)\left(L''(t)-\kappa_2 \int_{u=t}\frac{1}{|\nabla u|^2} d\mathcal H^1\right).
	\end{align}
Then, the second claim of Thm 1.1 in~\cite{pli} for $M^2$ and $\lambda=0$ and $m=n=2$ asserts that
$|\nabla u|^2\leq \kappa _1 |u|^2$ for all points in $M^2$. Therefore,
\[
 -\kappa_2\int_{u=t}\frac{1}{|\nabla u|^2} d\mathcal H^1\leq -\frac{\kappa_2 }{\kappa _1}\int_{u=t}\frac{1}{|u|^2} d\mathcal H^1=-\frac{\kappa_2 }{\kappa _1}\frac{1}{t^2}L(t).
\]
This combined with inequality~\eqref{com-est} gives the assertion.
\end{proof}

\begin{ex}\label{ex hyperb sf}
In order to construct an example of $2$-manifolds satisfying the assumptions of Proposition~\ref{thm-main2-2}, let us consider the $2$-dimensional infinite cylinder endowed with a warped Riemannian metric defined by 
	\[
	(S, g)=(\R\times\mathbb{S}^1, \ud t^2+\left(\frac{\ln\lambda}{2\pi}\cosh t\right)^2
	\ud \theta^2).
\]	
	By standard computations, the Gaussian curvature of $S$ equals $K=-\frac{(\cosh t)''}{\cosh t}=-1$, so that $(S,g)$ is a hyperbolic surface. Indeed, it is isometric to the quotient of the
	hyperbolic plane $H^2:=(\R\times (0,\infty), \frac{dx^2+dy^2}{y})$ by the cyclic group of isometries generated  by the isometry $f:H^2\to H^2$ defined as $f(x):=\lambda x$ for a given $\lambda>1$.  
	A positive non-constant harmonic function $u:S\to (0,\pi)$ can be explicitly defined as $u(t,\theta):=2\arctan(e^t)$. Indeed, upon computing directly the Laplace--Beltrami operator $\Delta_Su$ we find that 
	$$
	\Delta_Su=\left(\partial^2_{tt}+\frac{(\cosh t)'}{\cosh t}\partial_t+\frac{4\pi^2}{\ln^2\lambda\cosh^2t}\partial^2_{\theta \theta}\right)u=0.
	$$
	Notice that for annular domains in $S$ defined as $[a,b]\times \mathbb{S}^1$ with $-\infty<a<b<\infty$, $u$ is constant at each boundary component of such an annulus.
	Finally we observe that $L(s)=\ln\lambda\cosh (u^{-1}(s))=\frac{\ln\lambda}{\sin s}$ and so it holds that
$$
	(\ln L(s))''=\frac{1}{\sin^2s}=\frac{1}{L(s)}\int_{\{u=s\}}\frac{1}{|\nabla u|^2} d\mathcal H^1\geq 1.
	$$
Notice that this also proves the sharpness of Proposition~\ref{prop sharp K}.
	
	More generally, the existence of a non-trivial positive harmonic function can be proved by using a theory introduced by P. Li and L.-F. Tam. 
		By Theorem 2.1 in~\cite{li-tam} a surface $S$ supports a non-constant global positive harmonic function provided that $S$ has at least two ends, and at least one of them is non-parabolic (cf. Definitions 0.3-0.5 in~\cite{li-tam}). In the example above, as we will now argue, $S$ has exactly two non-parabolic ends. Indeed, the set $S\setminus B^S_R((0,0))$, for large enough $R$, consists of two components. Moreover, one can estimate that ${\rm Vol}(B^S_R((0,0)))\gtrsim \ln\lambda e^R+const$ and therefore
	$$
	\int^{\infty} \frac{R}{{\rm Vol}(B^S_R((0,1)))}\,\ud R<\infty,
	$$
	which implies that both ends are non-parabolic (see ~\cite{holop} for the details).
	This in particular suggests that the example above is stable under small perturbations of the metric.
\end{ex}

\section{Surfaces with bounded integral curvature}

%In this section we focus our attention on Alexandrov surfaces which serve as an important example of non-smooth (singular) spaces. Further examples of such spaces encompass Carnot--Carath\'eodory spaces, Gromov hyperbolic groups, manifolds with bounded Ricci curvature and their Gromov--Hausdorff limits, also $(R)CD$-, Alexandrov- and $CAT(\kappa)$-spaces. \komG{Note that most examples of singular space reduce to a BIC surface if you want the space to be a topological surface.} We refer to~\cite{re, tr-AnnIHP, tr} for comprehensive introductions to the theory of Alexandrov surfaces, see also~\cite{klp} and references therein. A significant class of such surfaces consists of surfaces with conical singularities, see for instance~\cite{bdm, ch, de, tr2} and their references. We would like to emphasize that results of this section apply to fairly general Alexandrov surfaces, including surfaces with conical singularities. 
In this section we focus our attention on surfaces with bounded integral curvature (\textsl{BIC surfaces} in short), also known in the literature as Alexandrov surfaces. This class represents a very important example of non-smooth (singular) spaces. Indeed, among BIC surfaces we find, for instance, the polyhedral surfaces (both Euclidean, hyperbolic, spherical or more generally Riemannian), the metric surfaces with curvature either lower or upper bounded in the sense of Alexandrov, and thus in particular the Gromov--Hausdorff limits of surfaces with lower bounded Gaussian curvature and $RCD(k,2)$ surfaces. Moreover, the BIC surfaces have a number of nice structure properties which permit to develop a differential calculus, including a singular (in some sense) Riemannian metric. We refer to~\cite{re, tr-AnnIHP, tr} for comprehensive introductions to the theory of Alexandrov surfaces, see also the more recent ~\cite{ab, fi, klp} and references therein. A significant class of BIC surfaces consists of surfaces with conical singularities, i.e.~surfaces which are smooth Riemannian except for a discrete set of singular points where the metric structure is at an infinitesimal level the one of a Euclidean cone; see for instance~\cite{bdm, ch, de, tr2} and their references. The results of this section apply to general Alexandrov surfaces with non-positive curvature measure (see details below), including surfaces with conical singularities whose angles at the vertices are greater than $2\pi$, and surfaces of $CAT(0)$ type. 
 
We start with recalling the basic definition of a \emph{surface of bounded integral curvature}. Let $(S,d)$ be a compact topological surface $S$ endowed with a metric function $d:S\times S \to [0,+\infty)$ which induces the topology of $S$ and is geodesic, i.e. $(S,d)$ is a geodesic metric space.
The surface $(S,d)$ is said to have \emph{bounded integral curvature}, if there exists a sequence of smooth Riemannian metrics $(g_j)$ on $S$ such that all $\int_S |K_{g_j}|\,d\mu_j$ are uniformly bounded and $d_j\to d$ uniformly in $j$. Here $K_{g_j}$ and $\mu_j$ denote, respectively, the Gaussian curvature and the Riemannian area measure of $(S, g_j)$, and $d_j$ is the distance induced by $g_j$ on $S$.
The \textsl{a posteriori} unique weak limit of the sequence of measures $(K_{g_j}\mu_j)$, denoted by $\omega$, is called the \emph{curvature measure of $(S,d)$}.
There exist several non-trivially equivalent definitions of BIC surface which are based for instance on polyhedral approximations (instead of Riemannian ones), or on a direct definition of $\omega$ as an outer measure exploiting the Gauss-Bonnet theorem at the infinitesimal level. Here, we are in particularly interested in the following analytic characterization.

Following \cite{tr}, let $S$ be a closed surface (i.e. compact surface without boundary) and $V (S, h)$ be the space of functions $u:S\to\R$ such that $\mu=\Delta_hu$ is a measure (note that for us a measure is a signed measure, i.e. not necessarily positive). For every $v\in V (S, h)$ and every $x,y \in S$, we set
\[
d_{h,v}(x, y) = \inf\left\{  \int_0^1 e^{v(\alpha(t))}\sqrt{h(\dot\alpha(t),\dot\alpha(t))}	\,dt\right\},
\]
where the infimum is taken over all the Lipschitz paths $\alpha: [0,1] \to S$ such that $\alpha(0) = x$  and $\alpha(1) = y$. It turns out that $d_{h,v}$ is indeed a distance \cite[Proposition 5.3]{tr}. In analogy with the smooth case, by using a common notation, we say that the distance $d$ on $S$ is induced by the (singular) Riemannian metric $e^{2v}h$; see also \cite{ab} for results about the regularity of this latter Riemannian metric. Since the harmonicity of a function is a conformal invariant in dimension two, the above relation permits to define harmonic functions on open subsets of $(S,d)$ as the functions which are harmonic with respect to the metric $h$. More precisely, we define the Laplace-Beltrami operator of $(S,d)$ by 
\[
\Delta_S:=\Delta_{(S,d)}:=e^{-2v}\Delta_h.
\]

Recall that a surface $(S,d)$ with bounded integral curvature is said to be without cusps, if $\omega(\{x\})< 2\pi$  for all $x\in S$. In particular, both the $BIC$ surfaces with non-positive curvature measure and all the smooth surfaces are automatically without cusps. Therefore, surfaces studied in Section 2 under the assumption $K\leq 0$ are covered by our discussion here as well.

We have the following theorem due to \cite{re-Sib} (see also~\cite[Theorem 7.1]{tr} and \cite{hu}).
\begin{theorem}\label{BIC-riem}
	Let $(S,d)$ be a closed surface with bounded integral curvature without cusps. Then, there exist a smooth Riemannian metric $h$ on $S$ and a function $v\in V(S,h)$ such that $d=d_{h,v}$. 
\end{theorem}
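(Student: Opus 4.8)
The plan is to reduce the statement to solving a single Poisson equation with measure data on the closed surface $S$, exploiting the conformal structure that every surface of bounded integral curvature carries intrinsically. First I would invoke Reshetnyak's conformal representation theory (see \cite{re}): the surface $(S,d)$ admits a natural conformal structure turning $S$ into a closed Riemann surface, and in each local holomorphic chart $d$ is induced by a metric $e^{2\psi}|dz|^2$ with $\psi$ a difference of subharmonic functions whose distributional Laplacian is the local expression of the curvature measure $\omega$. This is the only genuinely ``topological/analytic input'' step, and I would simply cite it. Now fix once and for all a smooth Riemannian metric $h$ on $S$ compatible with this conformal structure (by uniformization one may take $h$ of constant curvature, but any smooth representative works); let $\mu_h$ be its area measure and $K_h$ its Gaussian curvature, and set
\[
\nu:=K_h\,\mu_h-\omega,
\]
a finite signed Radon measure on $S$.

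The second step is to solve $\Delta_h v=\nu$ on $S$. The compatibility condition for solvability on a closed surface is $\nu(S)=0$, and this holds by the Gauss--Bonnet theorem applied to the smooth surface $(S,h)$ and to the surface $(S,d)$, which both give $\int_S K_h\,d\mu_h=2\pi\chi(S)=\omega(S)$. Standard potential theory for the Laplace--Beltrami operator on a compact surface (the Green function of $h$, applied to the Jordan decomposition of $\nu$) then produces a function $v$, unique up to an additive constant, with $\Delta_h v=\nu$ in the sense of measures; in particular $v\in V(S,h)$ and $v$ is $\delta$-subharmonic. By the measure-valued conformal change law for the Gaussian curvature (the singular version of $\Delta(\ln\lambda)=-\lambda^2K$ recalled above), the metric $e^{2v}h$ has curvature measure $K_h\mu_h-\Delta_h v=\omega$, i.e.\ the same as $(S,d)$.

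Next I would verify that $d_{h,v}$, defined as in the excerpt, is a genuine distance inducing the topology of $S$; here the \emph{no cusps} hypothesis $\omega(\{x\})<2\pi$ is essential. Near an atom of $\omega$ of mass $a=\omega(\{x\})$, the Green potential gives $v=-\tfrac{a}{2\pi}\log\dist_h(\cdot,x)+O(1)$, so $e^{2v}$ behaves like $\dist_h(\cdot,x)^{-a/\pi}$, which is locally $\mu_h$-integrable exactly when $a<2\pi$; in that range $e^{2v}h$ is, at the infinitesimal level, a Euclidean cone of angle $2\pi-a>0$, and the length distance it defines is finite and induces the ambient topology (an atom of mass $\ge 2\pi$ would instead produce a cusp, excluded by assumption). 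Off the atomic part of $\omega$ the function $v$ is locally bounded above and below, so no difficulty arises there. Consequently $d_{h,v}$ is a well-defined distance on $S$ whose curvature measure is $\omega$.

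The last and hardest step is to identify $d_{h,v}$ with the original $d$. I would argue by approximation. Take a defining Riemannian approximation $(g_j)$ of $(S,d)$: smooth metrics with $d_{g_j}\to d$ uniformly and $\sup_j\int_S|K_{g_j}|\,d\mu_{g_j}<\infty$. Correcting by diffeomorphisms converging to the identity (the conformal structure of $g_j$ converges to that of $(S,d)$), one may assume each $g_j$ is conformal to the fixed metric $h$, say $g_j=e^{2w_j}h$. Since the curvature measures $K_{g_j}\mu_{g_j}$ converge weakly to the a posteriori unique curvature measure $\omega$, one has $\Delta_h w_j=K_h\mu_h-K_{g_j}\mu_{g_j}\rightharpoonup K_h\mu_h-\omega=\Delta_h v$; because the Green operator of $h$ sends weakly convergent measures to $L^1(S,\mu_h)$-convergent functions, it follows (after normalizing the additive constants, e.g.\ by the total area) that $w_j\to v$ in $L^1$. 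Finally I would invoke Reshetnyak's key continuity theorem: for smooth conformal metrics $e^{2w_j}h$ with $w_j\to v$ in $L^1$ and uniformly bounded total curvature, the length distances $d_{g_j}$ converge uniformly to $d_{h,v}$. Combining this with $d_{g_j}\to d$ yields $d=d_{h,v}$. I expect this last continuity statement --- a precise control of geodesic lengths across the (possibly dense) set where $\omega$ concentrates --- to be the main obstacle; the remaining ingredients are either soft (Gauss--Bonnet bookkeeping, potential theory on a compact surface, continuity of the Green operator) or standard citations (the conformal representation of Alexandrov surfaces and the uniqueness of $\omega$).
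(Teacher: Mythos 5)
The first thing to note is that the paper does not prove this statement at all: Theorem~\ref{BIC-riem} is quoted as a known result, and the sentence preceding it explicitly attributes it to \cite{re-Sib} (see also \cite[Theorem 7.1]{tr} and \cite{hu}). So there is no in-paper argument to compare yours against; what can be assessed is whether your outline would constitute a proof. Its architecture is the correct and standard one --- local conformal representation, the Gauss--Bonnet compatibility $\omega(S)=2\pi\chi(S)$, a global Poisson equation $\Delta_h v=K_h\mu_h-\omega$ solved by the Green potential of the compact surface $(S,h)$, the cone-angle analysis at atoms showing why $\omega(\{x\})<2\pi$ is exactly the integrability threshold, and an approximation argument to identify $d_{h,v}$ with $d$. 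This is essentially how Huber and Troyanov derive the global statement from Reshetnyak's local theory, and the individual computations you record (the logarithmic singularity of the potential at an atom, the conformal change law in measure form, the mass balance via Gauss--Bonnet) are all correct.

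However, as a self-contained proof the proposal has genuine gaps, located exactly where the difficulty of the theorem lives. First, your opening step --- that $(S,d)$ carries a conformal structure in which $d$ is locally induced by $e^{2\psi}|dz|^2$ with $\psi$ a difference of subharmonic functions whose Laplacian represents $\omega$ --- \emph{is} Reshetnyak's isothermal-coordinates theorem, i.e.\ the local form of the very statement being proved and the precise content of the reference the paper gives for it. Citing it reduces your argument to a (correct) globalization, not an independent proof. Second, the identification $d=d_{h,v}$ rests on two unproved assertions: that the approximating metrics $g_j$ can be corrected by diffeomorphisms so as to become conformal to the fixed $h$ (convergence of the conformal structures under uniform convergence of distances with bounded total curvature is itself a nontrivial compactness statement, not a routine normalization), and the continuity theorem that $L^1$-convergence of conformal factors with uniformly bounded curvature mass forces uniform convergence of the induced length distances. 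You flag the latter as the main obstacle, correctly --- it is again a theorem of Reshetnyak, and the no-cusp hypothesis enters there a second time to prevent lengths from degenerating --- but flagging it does not discharge it. In short: your sketch is a faithful roadmap of the literature the paper cites, but every hard step in it is itself a citation to that same literature.
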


Here we obtain the following generalization of Theorem~\ref{thm-main1} to the non-smooth case.

\begin{theorem}\label{thm-reshetnyak}
Let $(S,d)$ be a surface of bounded integral curvature $(S,d)$ with non-positive curvature measure $\omega$, and let $\Omega$ be a non-degenerate topological annulus domain in $(S,d)$ whose boundary components are the Jordan curves $\Gamma_i$, $i=1,2$. Let $t_1,t_2\in \R$ be such that $t_1<t_2$ and let us consider a continuous up to the boundary solution of the following Dirichlet problem in $\Om$ for the Laplace--Beltrami harmonic operator $\Delta_S$ on $S$:
	\[
	\begin{cases}
	\Delta_{S} u=0 & \hbox{in } \Om,\\
	u|_{\Gamma_1}=t_1, & u|_{\Gamma_2}=t_2.
	\end{cases}
	\]
Then $t\mapsto \log L(t)$ is a convex function on $[t_1,t_2]$.
\end{theorem}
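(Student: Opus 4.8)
The plan is to represent the singular metric conformally over a smooth background, approximate it by \emph{smooth} metrics of nonpositive curvature, apply Theorem~\ref{thm-main1} to each approximant, and pass to the limit using Reshetnyak's continuity of lengths. By Theorem~\ref{BIC-riem} we may write $d=d_{h,v}$ for a smooth metric $h$ on $S$ and $v\in V(S,h)$; since $\Delta_S=e^{-2v}\Delta_h$, the function $u$ is harmonic with respect to $h$ in $\Om$, and, reading the conformal change of the Gaussian curvature distributionally, the curvature measure is $\omega=K_h\,dA_h-\Delta_h v$. Thus the hypothesis $\omega\le 0$ says precisely that $\nu:=-\omega$ is a nonnegative finite measure, i.e.\ $\Delta_h v=K_h\,dA_h+\nu$. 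As a preliminary, I would record the structure of the level curves exactly as in the proof of Theorem~\ref{thm-main1}: the branching behaviour of planar harmonic functions in isothermal coordinates, together with the strong maximum principle and a topological argument inside the annulus, forces $\nabla u\ne 0$ throughout $\Om$ --- a step which uses only the harmonicity of $u$ and its continuity up to $\partial\Om$, and neither a curvature sign nor any regularity of $\Gamma_1,\Gamma_2$. Hence for every $t\in(t_1,t_2)$ the level set $\gamma_t:=\{u=t\}$ is a single smooth Jordan curve, compactly contained in $\Om$, depending smoothly on $t$, and $L(t)=\int_{\gamma_t}e^{v}\,ds_h$.

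Next comes the smoothing. Mollifying $\nu$, for instance by the $h$-heat semigroup, produces smooth nonnegative densities $\nu_j\ge 0$ with $\nu_j\rightharpoonup\nu$ weakly and $\nu_j(S)=\nu(S)$. By Gauss--Bonnet, $\int_S K_h\,dA_h=2\pi\chi(S)=\omega(S)$ in the smooth, resp.\ singular, setting, so the right-hand side of $\Delta_h v_j=K_h\,dA_h+\nu_j$ has zero total mass and admits a smooth solution $v_j$, which I normalize by $\int_S v_j\,dA_h=\int_S v\,dA_h$ (if $S$ has nonempty boundary, subtracting a small smooth bump supported away from $\overline\Om$ corrects the mass without affecting the argument). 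Then $g_j:=e^{2v_j}h$ is a smooth Riemannian metric whose curvature measure is $-\nu_j$, so $K_{g_j}\le 0$ on a neighbourhood of $\overline\Om$. From $\nu_j\rightharpoonup\nu$ and the normalization, elliptic estimates give $v_j\to v$ in $W^{1,p}(S)$ for every $p<2$, hence in $L^q(S)$ for every $q<\infty$ and, along a subsequence, almost everywhere. Since $u$ is $g_j$-harmonic in $\Om$ with constant boundary values and $|\nabla u|_{g_j}\ne 0$ in $\Om$ (same branching argument), the proof of Theorem~\ref{thm-main1} goes through on $(\Om,g_j)$: the $C^{1,\alpha}$-hypothesis is used there only to obtain a \emph{global} lower bound for $|\nabla u|$, which is superfluous here because every $\gamma_t$ with $t\in(t_1,t_2)$ is compactly contained in $\Om$; combining formula~\eqref{L-der2} (applicable since $|\nabla u|_{g_j}>0$ in $\Om$) with $K_{g_j}\le 0$ and the Cauchy--Schwarz inequality applied to~\eqref{L-der1} then yields $(\log L_j)''\ge 0$ on $(t_1,t_2)$, where $L_j(t)$ denotes the $g_j$-length of $\gamma_t$.

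It remains to let $j\to\infty$. The curvature measures $-\nu_j$ of $g_j$ have vanishing positive part and converge weakly to $\omega$ with uniformly bounded total variation, so Reshetnyak's continuity theorem for the length of a fixed curve under convergence of surfaces of bounded integral curvature (\cite{re}; see also~\cite{tr-AnnIHP,tr}) gives $L_j(t)=\int_{\gamma_t}e^{v_j}\,ds_h\to\int_{\gamma_t}e^{v}\,ds_h=L(t)$ for every $t\in(t_1,t_2)$, away at worst from the countable set of heights $t$ for which $\gamma_t$ meets an atom of $\omega$. Since $t\mapsto L(t)$ is continuous on $(t_1,t_2)$ (as follows from $e^v\in L^1_{\mathrm{loc}}$, the smooth dependence $t\mapsto\gamma_t$ and the coarea formula), the convex functions $\log L_j$ converge locally uniformly to a convex function agreeing with $\log L$ on a dense set, hence equal to $\log L$; thus $\log L$ is convex on $(t_1,t_2)$. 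Lower semicontinuity of length along $\gamma_t\to\Gamma_i$ as $t\to t_i$ then propagates the inequality to the closed interval $[t_1,t_2]$, with the value $+\infty$ permitted at an endpoint (as happens when $\Gamma_i$ fails to be rectifiable).

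The main obstacle is precisely the convergence $L_j(t)\to L(t)$, which is also the only place where the sign of $\omega$ enters beyond Theorem~\ref{thm-main1}: Reshetnyak's theory shows that the length of a fixed curve can only \emph{drop} in the limit when the \emph{positive} part of the curvature concentrates, and this phenomenon is excluded here, so the singular conformal factor $e^{v_j}$ is stable along the level curves. A secondary technical point, handled by the explicit conformal construction above, is the need to produce smooth approximants with $K_{g_j}\le 0$ rather than merely with vanishingly small positive curvature.
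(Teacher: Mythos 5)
Your overall strategy coincides with the paper's: represent the singular metric conformally over a smooth background, approximate by smooth metrics with $K\le 0$, apply Theorem~\ref{thm-main1} to each approximant on sub-annuli $\{t_1+\epsilon<u<t_2-\epsilon\}$, and pass to the limit; your observations that the interior non-vanishing of $\nabla u$ needs neither a curvature sign nor boundary regularity, and that the $C^{1,\alpha}$ hypothesis is irrelevant for interior level curves, are both correct. However, the two steps you yourself flag as delicate are genuinely incomplete. For the convergence $L_j(t)\to L(t)$: your approximants $v_j$, obtained by mollifying the measure and solving $\Delta_h v_j=K_h\,dA_h+\nu_j$, converge to $v$ only in $W^{1,p}$ ($p<2$) and a.e., which says nothing about $\int_{\gamma_t}e^{v_j}\,ds_h$, an integral over a set of area zero. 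The appeal to a Reshetnyak continuity theorem for lengths would require exhibiting $v_j$ as potentials of the weakly convergent measures plus uniformly convergent harmonic remainders, which your normalization does not supply without further work. The paper sidesteps this entirely: since $\omega\le 0$ the conformal factor is subharmonic, and the standard mollification of a subharmonic function (\cite[Theorem 3.3.3]{ArGa}) yields a \emph{decreasing} sequence of smooth subharmonic functions converging pointwise \emph{everywhere}, so that the curve integrals converge by monotone convergence. Replacing your PDE-based smoothing by this monotone one closes the gap and also guarantees $K_{g_j}\le 0$ for free.

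The second gap is the endpoint case, and here your argument points in the wrong direction. Convexity of $\log L$ on the closed interval requires $L(t_i)\ge\limsup_{t\to t_i}L(t)$, whereas lower semicontinuity of length along $\gamma_t\to\Gamma_i$ gives $L(t_i)\le\liminf_{t\to t_i}L(t)$; the latter cannot yield the former, and allowing $L(t_i)=+\infty$ only disposes of the non-rectifiable case. The paper obtains the correct inequality by first uniformizing $\Om$ to a round annulus $A_{1,R}$ (Lemma~\ref{unif-annulus}), so that $u=-\ln|z|$ and the level curves are concentric circles, then extending the conformal factor upper semicontinuously to $\overline{A_{1,R}}$ via the Osgood--Carath\'eodory theorem and comparing $\int_{|z|=e^{-t}}e^{\bar v(z)}\,d\sigma$ with $\int_{|z|=e^{-t}}e^{\bar v(z/|z|)}\,d\sigma$ through the radial projection. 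Your global representation on $S$ via Theorem~\ref{BIC-riem} avoids the uniformization lemma but thereby loses exactly the radial comparison needed at the endpoints, so this part of the statement remains unproved in your version.
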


\begin{rem}\label{r: Jordan bdy}
	As a special case of the Theorem~\ref{thm-reshetnyak}, we get a generalization of Theorems~\ref{thm-main1} and \ref{thm-main2} to non-degenerate annular domains in smooth Riemannian manifolds whose boundary consists of Jordan curves, i.e. no further smoothness of boundary curves is assumed. According to our knowledge this result is new also in the Euclidean case. 
	\end{rem}

\begin{rem} It is natural to conjecture that the converse of Theorem \ref{thm-reshetnyak} is also true, i.e.~that the function $L(t)$ is convex for every solution of the Dirichlet problem on topological annulus domain only if the BIC surface has non-positive curvature. In particular, this would generalize Theorem \ref{thm-main2} to the singular setting. This appear as a quite difficult question, due to the possible presence of a purely singular positive part of the curvature measure (i.e.,~supported on a set of Hausdorff dimension strictly smaller than $2$).
\end{rem}

%\section{Almost proof}
In the proof of the theorem we need the following uniformization lemma for a non-degenerate topological annulus on a BIC surface. Recall that, by \textsl{non-degenerate} we mean an annulus not homeomorphic to a punctured disc.
\begin{lem}\label{unif-annulus}
	Let $(S,d)$ a compact surface with bounded integral curvature without cusp. Let $\Om\Subset S$ be a non-degenerate topological annulus in $(S,d)$. Then $(\Om, d_\Om)$ is isometric to the annulus $A_{1, R}:=\{1< |z|< R\}\subset \C$, for some $R>1$, endowed with a conformally flat metric. More precisely, there exists a homeomorphism $\phi:A_{1, R}\to \Omega$ and a function $\bar{v}\in V(A_{1, R}, g_0)$ such that 
	\[
	d_\Omega(\phi(x),\phi(y))=d_{g_0,\bar{v}}(x,y),\quad \hbox{for all }x,y\in A_{1,R},
	\] 
where $g_0$ stands for the Euclidean metric $g_0=\ud x ^2+\ud y^2$.
%	\komT{Does $g_0$ stand for the Euclidean metric $g_0=\ud x ^2+\ud y^2$? If so, then I would state it in the lemma.}\komG{Yes, it is. I'm not sure whether this is the best notational choice. Probably there is still some effort to be done in order to make notation consistent, uniform all over the paper, and sufficiently intuitive.}
\end{lem}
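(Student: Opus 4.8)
The plan is to reduce the statement to the classical uniformization of smooth Riemannian annuli, using the analytic representation $d = d_{h,v}$ furnished by Theorem~\ref{BIC-riem}. First I would invoke Theorem~\ref{BIC-riem} to obtain a smooth Riemannian metric $h$ on the closed surface $S$ and a function $v\in V(S,h)$ with $d = d_{h,v}$. The topological annulus $\Om\Subset S$ is in particular a smooth (topological) annulus sitting inside the smooth surface $(S,h)$. By the classical uniformization theorem for doubly connected Riemann surfaces (equivalently, the theory of conformal moduli of smooth Riemannian annuli), there is a conformal diffeomorphism $\psi:\Om\to A_{1,R}$ onto a round Euclidean annulus $A_{1,R}=\{1<|z|<R\}$ for a unique $R>1$, i.e. $\psi_\ast h = \rho^2\, g_0$ on $A_{1,R}$ for some smooth positive conformal factor $\rho$. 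Set $\phi := \psi^{-1}: A_{1,R}\to\Om$.

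Next I would transport the function $v$ through this conformal change and absorb the factor $\rho$. Write $v_0 := (v\circ\phi) + \log\rho$ on $A_{1,R}$. Because $\psi$ is conformal, the metric $e^{2v}h$ pulls back under $\phi$ to $e^{2(v\circ\phi)}\rho^2 g_0 = e^{2v_0} g_0$, which is exactly the form required. It remains to check the regularity claim $v_0 \in V(A_{1,R}, g_0)$, i.e. that $\Delta_{g_0} v_0$ is a (signed, locally finite) measure on $A_{1,R}$: since $\Delta_h v = \omega$ is a measure by hypothesis (the curvature measure, here assumed non-positive but in any case finite), and $\Delta_{g_0}(v\circ\phi) = \rho^2\,(\Delta_h v)\circ\phi = \rho^2\,(\omega\circ\phi)$ is still a locally finite measure by conformal invariance of the Laplacian in dimension two, while $\Delta_{g_0}\log\rho = -\rho^2 K_h\circ\phi$ is a smooth (hence locally $L^1$, hence measure-valued) function because $h$ is smooth; so $\Delta_{g_0} v_0$ is a sum of two measures, hence a measure. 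Finally, the defining formula for $d_{g_0,v_0}$ involves the path integral $\int_0^1 e^{v_0(\alpha(t))}\sqrt{g_0(\dot\alpha,\dot\alpha)}\,dt$; substituting $\alpha = \psi\circ\beta$ for paths $\beta$ in $\Om$ and using $\psi_\ast h=\rho^2 g_0$ converts this integral into $\int_0^1 e^{v(\beta(t))}\sqrt{h(\dot\beta,\dot\beta)}\,dt$, so taking infima over Lipschitz paths gives $d_{g_0,v_0}(x,y)=d_{h,v}(\phi(x),\phi(y))=d_\Om(\phi(x),\phi(y))$, with $\bar v := v_0$ and the homeomorphism $\phi$ as above. (One should note that $d_\Om$ here is the intrinsic distance of $\Om$; since $\Om\Subset S$ and $\phi$ is a homeomorphism, this matches the restriction being considered.)

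I expect the main obstacle to be a careful treatment of the boundary and of the intrinsic-versus-ambient distance: one must be sure that the conformal uniformization of the \emph{open} annulus $\Om$, together with the non-degeneracy hypothesis (so that $\Om$ is not conformally a punctured disc and $R<\infty$), produces a genuine homeomorphism up to the correct model $A_{1,R}$, and that the measure $\Delta_h v$ does not develop problematic behaviour near $\bdy\Om$ — which is handled by working on the slightly larger smooth surface $S$ where Theorem~\ref{BIC-riem} applies globally, and by restricting afterwards. The conformal change of the path-integral and the transfer of the measure condition are routine once the smooth conformal model is in place; the substance is really the classical uniformization of smooth annuli plus the conformal invariance already exploited repeatedly in Section~2.
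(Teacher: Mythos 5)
Your argument is correct and reaches the same destination by a more direct route. Both proofs start from Theorem~\ref{BIC-riem} to write $d=d_{h,v}$ and then reduce everything to uniformizing the \emph{smooth} conformal structure $[h]$ on $\Om$, carrying the singular factor $v$ along passively. The difference is in how the uniformization of the doubly connected surface $(\Om,[h])$ is obtained: you invoke the classification of doubly connected Riemann surfaces directly, whereas the paper first passes to a collared neighbourhood $\Om'\Supset\Om$ with smooth boundary, isometrically embeds $(\Om',h)$ into a smooth metric on $\mathbb{S}^2$, uniformizes the sphere, stereographically projects, and only then applies the planar Riemann mapping theorem for doubly connected domains (Ahlfors). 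That detour buys two things: (i) a concrete citable statement that a planar domain whose two complementary components are non-degenerate continua is conformally a round annulus with $1<R<\infty$ --- which is exactly the point you flag as "the main obstacle" and dispose of by appeal to the general theory of moduli, an acceptable but less self-contained step; and (ii) a conformal chart defined on a strictly larger open set $\hat\Omega_\C\Supset\overline{\Omega_\C}$, which is not needed for the lemma itself but is used later in the proof of Theorem~\ref{thm-reshetnyak} to extend $\bar v$ upper semicontinuously to $\overline{A_{1,R}}$ via the Osgood--Carath\'eodory theorem; your streamlined construction would have to be revisited at that later stage. On the other hand, your explicit decomposition $\bar v=(v\circ\phi)+\log\rho$ and the verification that $\Delta_{g_0}\bar v$ is still a measure is a point the paper leaves implicit; note only the minor imprecision that, when both sides are read as measures rather than densities, the conformal invariance gives $\Delta_{g_0}(v\circ\phi)=\phi^{*}(\Delta_h v)$ with no factor $\rho^2$ (the factor is absorbed by the volume element), which does not affect the conclusion.
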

Here $d_\Omega$ stands for the intrinsic metric induced by $d$ on $\Omega$ and, as above, $V(A_{1, R},g_0)$ is the space of functions $u:A_{1,R}\to\R$ such that $\mu=\Delta_hu$ is a (signed) measure. Moreover, for every $v\in V (A_{1, R},g_0)$ and every $x,y \in S$, we set
\[d_{g_0,v}(x, y) = \inf\left\{  \int_0^1 e^{v(\alpha(t))}\sqrt{g_0(\dot\alpha(t),\dot\alpha(t))}	\,dt\right\},
\]
where the infimum is among all the Lipschitz paths $\alpha$ in $A_{1,R}$ connecting $x$ to $y$.
% \komT{Not necessary Lip paths?}\komG{This is equivalent. Every rectifiable curve (hence in particular every Lipschitz curve) admit a natural (i.e constant-speed) parametrization. See Section 2 in the book \textit{A course in metric geometry} by Burago-Burago-Ivanov} 

%\komT{The remark is commented out.}
%\begin{remark}
%	Suppose that $(\Sigma,g)$ is a (closed) Riemannian surface with smooth boundary homeomorphic to an annulus. Then, as it is clear by the proof, Lemma \ref{unif-annulus} generalizes to this setting, implying that $(\Sigma,g)$ is conformal to $(A_{1,R},g_0)$ for some $R>1$.
%\end{remark}

\begin{proof}[Proof of Lemma~\ref{unif-annulus}]
By Theorem \ref{thm-reshetnyak}, there exist a smooth Riemannian metric $h$ on $S$ and a function $v\in V(S,h)$ such that $d=d_{h,v}$. Since $\Omega$ is non-degenerate, by approximation, we can find an open collared neighbourhood $\Omega'$ of $\Omega$ such that $\Omega\Subset \Omega'\subset S$ and $\partial \Omega'$ is smooth in $(S,h)$. Then, there exists a smooth Riemannian metric $\tilde h$ on $\mathbb S^2$ such that $(\Omega',h)$ embeds isometrically in $(\mathbb S^2,\tilde h)$, see e.g. Theorem A in \cite{pv}.
According to the uniformization theorem for surfaces, there exists a $\psi\in C^\infty(\mathbb S^2)$ such that $\tilde h=  e^{2\psi}g_1$, $g_1$ being the round metric on $\mathbb S^2$ of constant Gaussian curvature $1$. Hence, up to isometries, $(\Omega,d_\Omega)$ can be identified with a non-degenerate topological annular domain $\Omega_{\mathbb S^2}$ of $\mathbb S^2$ endowed with the metric $e^{2(\psi+v)}g_1$.
Finally, let $P\in \mathbb S^2\setminus \overline{\Omega_{\mathbb S^2}}$. The stereographic projection from $P$ induces a conformal diffeomorphism of $(\Omega_{\mathbb S^2},g_1)$ onto $(\Omega_{\C},g_0)$, for some   non-degenerate topological annular domain $\Omega_{\C}\Subset \C$. For later purposes, note that this conformal diffeomorphism can be defined on a slightly larger open set properly containing $\Omega_{\mathbb{S}^2}$ giving rise via the aforementioned stereographic projection to an open set $\hat\Omega_\C\Supset \Omega_\C$. In order to complete the proof, we apply a version of the Riemann mapping theorem to obtain that $\Omega_{\C}$ is conformal to $A_{1,R}\subset \C$ for some $R>1$, see Theorem 10 in \cite[Chapter 6]{ah}. All together, we have obtained the existence of an isometry $\phi:(A_{1,R},e^{2\bar{v}}g_0)\to (\Omega,d)$ for some conformally flat metric $e^{2\bar{v}}g_0$, where $\bar{v}:A_{1,R}\to \R$ is such that
$\Delta_{g_0}\bar{v}$ is a measure on $A_{1,R}$. Note that $\bar{v}$ is non-smooth in general, so that $e^{2\bar{v}}g_0$ is not Riemannian. However, it is a well-defined length metric in which the length
of any constant-speed curve $\alpha:[0.1]\to A_{1,R}$ is given by the formula $\int_0^1 e^{\bar{v}(\alpha(t))}\sqrt{g_0(\dot\alpha(t),\dot\alpha(t))}\,dt$.
\end{proof}

%\komT{Basing on your email from 26.I I added the following remark. For me it's fine to have three various proofs, but we definitely should argue in the introduction why we present three approaches: geometric, analytic and BIC. Moreover, I think that the present location of the analytic proof in App. A is the right one: If we move it to Section 3, before the proof of Theorem~\ref{thm-reshetnyak}, then the reader will be distructed in the flow of presentation (first we would introduce the BIC setting, then make a digression for an analytic proof, and then return to the BIC-proof). If, instead, we start Section 3 with the analytic proof, then it might not be that obvious why familiarizing with the proof may help understand the BIC-proof.}

\begin{rem}
 We encourage readers less familiar with the BIC setting to see the presentation in the analytic proof of Theorem~\ref{thm-main1} in Appendix A, especially computations for $L$.
\end{rem}

\begin{proof}[Proof of Theorem~\ref{thm-reshetnyak}] 
	 By Lemma~\ref{unif-annulus}, we have the existence of an isometry $\phi$ from the annulus $A_{1,R}:=\{1<|z|<R\}\subset \mathbb C = \R^2$ endowed with the conformally flat metric $e^{2\bar{v}} g_0$ onto the topological annular domain $(\Omega, h)$. Here $g_0:=dx^2+dy^2$, $R>1$ and $\bar{v}:A_{1,R}\to\R$ is such that $\bar{v}\in V(A_{1,R},g_0)$, i.e. $\Delta_{g_0}\bar{v}$ is a measure. More precisely, $\bar v$ is obtained as $\bar v = \hat v\circ F$ where $F$ is a conformal univalent mapping from $A_{1,R}$ to $\Omega_\C$ and $\hat v\in V(\hat\Omega_\C,g_0)$; see the proof of Lemma~\ref{unif-annulus}.
	According to \cite[Corollary 6.3]{tr}
%	\komT{This corollary has an aditional assumption that $\int \omega=0$ as far as the google translator says. Is it the case for us?}\komG{Yes. Indeed the assumption asks that $\int \mu=0$ and not that $\int \omega=0$. In Corollary 6.3 of \cite{tr} $\mu$ is an a priori given Radon measure. The condition $\int \mu=0$ ensures that there exists a BIC metric on $S$ whose curvature measure is $\omega$. In our case, the BIC structure is given, so that $\int \mu=0$ is for free (it is a consequence of Gauss-Bonnet theorem). Alternatively we could apply Theorem-Definition 13 in \cite{de}; note that this latter is stated for simply-connected domains, however it can be applied locally once the isometry $\phi$ is given.}\komT{Ok, I checked Troyanov's paper and think that we don't have to use Debin's paper.}
	, we know that the curvature measure $\omega$ of $(S,d)$ is pulled-back via the isometry $\phi$ precisely to the measure $(\phi)^\ast(\omega)=-e^{-2\bar v}\Delta_{g_0}\bar{v}$ on $A_{1,R}$. With an abuse of notation from now on, for the sake of simplicity, we will denote by $\omega$ the measure $(\phi)^\ast(\omega)$ defined on $A_{1,R}$.  
	By a similar reasoning, $(\hat\phi)^\ast(\omega)=-e^{-2\hat v}\Delta_{g_0}\hat{v}$ on $\hat \Omega_\C$, where $\hat\phi$ is the conformal map from $\hat \Omega_\C$ to $(S,d)$. 
%	 Therefore, a function $\phi-v$ is harmonic, and hence it holds that a function $\bar v:=v+\varphi \in V(A_{1,R},g_0)$\footnote{\komG{we are locally taking the coordinates system for $S$ given on $A_{1,R}$ by the conformal isometries. This part should be better written}} and $d_{g_0,\bar v}=d$, cf. Example 1.5 in~\cite{de}. 

Let $u$ be the harmonic function in the statement of the theorem. Up to a vertical shift, we can assume without loss of generality that $t_2=0$. Then, up to a multiplicative constant, we can assume that  $t_1 = -\ln R$. Note that both of these transformations do not affect the conclusion of the theorem. Thus, $u$ is the unique harmonic function on $A_{1,R}$ with boundary data $0$ and $-\ln R$, namely $u=-\ln |z|$.

If the curvature measure $\omega$ of $(S,d)$ is non-positive, then $\bar{v}$ verifies 
\[
\Delta_0\bar{v} \geq 0,
\]
in the weak sense. More precisely, by definition of the distributional Laplacian \cite[Chapter 4.3]{ArGa}, one has
\[
\int_{A_{1,R}} \bar{v} \Delta_0\psi \,d\lambda \ge0
\]
for all $0\le \psi\in C^\infty_c(A_{1,R})$.
%see for instance \cite[Section 1.2]{de}
%\komG{The page and section in \cite{de} refer to the arXiv version because I can not access the published one.}\komT{There is no weak statement in \cite{de}, neither in arxiv or published version. I agree that one can deduce it, but I think that refering to Ch. 4.3 in \cite{ArGa} provides better reference. I updated the paragrpah accordingly.}
% and references therein.
Furthermore, \cite[Theorem 4.3.10]{ArGa} implies that $\bar v$ is subharmonic (see also \cite[p. 99]{re}), meaning that $\bar{v}$ is upper semicontinuous in $A_{1,R}$,  $\bar{v}\not\equiv-\infty$ on $S$, and that $\bar{v}$ satisfies the subharmonic mean value property for each ball compactly contained in $A_{1,R}$, cf. \cite[Definition 3.1.2]{ArGa}. Similarly, $\hat v$ is subharmonic on $\hat \Omega_\C$, and it is there upper semicontinuous. 

We are going to prove that $L(t)$ is a convex function on $[t_1, 0]$. 
%In order to complete this goal, we consider an open collar $\Om'\subset S$ of annulus $\Om\Subset S$ and containing it, defined as follows: $\Om'=\{x\in S: \dist_h(x, \Om)<\frac12 \dist(\Om, \partial S)\}$. 
% \komG{this miss the boundary points $t_1=0$ and $t_2$. Alternatively we could "enlarge" the topologically annular domain so that $A_{1,R}$ is in its interior.} \komT{Modified so that we work on the enlarged annulus}\komG{this part, implying the convexity at the boundary, remains to be discussed and settled}
%Note that by the discussion in the beginning of this proof and by the definition of $\bar{v}$, it gives rise to the conformal factor. 
First, we \textsl{claim} that $L(t)$ is convex on any subinterval $I_\epsilon:=(t_1+\epsilon,-\epsilon)$, for $0<\epsilon<\frac12\ln R$. Note that $L(t)$ can be expressed in the integral form as follows
\begin{equation}\label{eq:lengthBIC}
L(t) = \int_{|z|=e^{-t}}e^{\bar v}\,d\sigma,
\end{equation}
see, for instance, \cite[Example 1.5]{de} and the corresponding computations for $L(t)$ in Appendix A.
By definition of $u$ it holds that $\Omega_{\epsilon/2}:=u^{-1}(I_{\epsilon/2})\Subset A_{1,R}$. 
Then, by \cite[Theorem 3.3.3]{ArGa}, there exists a decreasing sequence $(s_j)$ of subharmonic functions defined in $A_{1,R}$ and smooth in $\Omega_{\epsilon/2}$ for sufficiently large $j$, constructed by mollifications (cf. \cite[(3.3.1)]{ArGa}), such that $(s_j)$ converges pointwise to $\bar v$ on $\Omega_{\epsilon/2}$. We associate with $(s_j)$ a sequence $(g_j)$ of smooth Riemannian metrics on $\Omega_{\epsilon/2}$ conformally defined by $g_j := e^{2s_j}g_0$.  By the proof of~\cite[Theorem 3.3.3]{ArGa} we have that $s_j$ are subharmonic for large enough $j$ such that $1/j<\dist(\Om_\epsilon, \partial A_{1,R})\leq e^{-t_1}(e^{\epsilon/2}-1)$. Let us denote the smallest of such $j$ by $j_0$ and note that $j_0$ depends only on $\epsilon$. Therefore, for $j\geq j_0$ the smooth metrics $g_j$ have nonpositive  Gaussian curvatures $K_{g_j}$: 
\[
K_{g_j}=(-\Delta_{0}s_j)\, e^{-2s_j}\leq 0.
\]
Since the harmonicity is a conformal invariant in dimension $2$, we get that the harmonic function $u$ given by assumptions of Theorem~\ref{thm-reshetnyak} is harmonic in $\Om_\epsilon$ also with respect to metric $g_j$ for every $j\geq j_0$. Moreover, the annular domain $\Omega_\epsilon$ has smooth boundary in $(\Omega_{\epsilon/2},g_j)$. Applying Theorem~\ref{thm-main1}, we get that $t\mapsto L_j(t)$ is a convex function on $I_\epsilon$. Here,
\[
L_j(t) = \int_{|z|=e^{-t}}e^{s_j}\,d\sigma
\]
is the length of the level set $\{u=t\}$ with respect to the metric $g_j$. Observe that the sequence of functions $(e^{s_j})$ is monotone non-increasing, locally uniformly bounded above by $e^{s_{j_0}}$ and trivially bounded below by $0$, and converges pointwise to $e^{\bar v}$. 
%Furthermore, again by the proof of~\cite[Theorem 3.3.3]{ArGa}, it holds that 
%\[
%\|s_{j_0}\|_{L^{\infty}_{loc}(A_{1,R})}\leq c(j_0, |A_{1,R}|)\,\|\bar{v}\|_{L^{1}_{loc}(A_{1,R})}<\infty
%\]
% and so $\|e^{s_{j_0}}\|_{L^{\infty}_{loc}(A_{1,R})}<\infty$ as well. 
Hence, by the monotone convergence theorem we get that $L_j(t)\to L(t)$ pointwise on $I_\epsilon$.  
To conclude the proof of the claim, we observe that the pointwise limit of a sequence of convex functions is convex as well. 
Since $\epsilon>0$ is arbitrary we straightforwardly deduce that $L(t)$ is convex on open interval $(t_1,0)$. 

It remains to prove the convexity of $L(t)$ at the boundary points $t_1$ and $0$, hence on the whole interval $[t_1,0]$. To this end, we note that $\bar v$ (so far defined on $A_{1,R}$) is indeed well-defined and upper semicontinuous on the closure $\overline{A_{1,R}}$. Indeed, $F$ can be continuously extended to a map $\bar F:\overline{A_{1,R}}\to\overline{\Omega_\C}$ by a version of the Osgood--Carath\'eodory Theorem, see \cite[Section 8]{OsTa} or \cite[Theorem 2.8]{GaSh}. Since, $\hat v$ is upper semicontinuous on $\hat\Omega_\C\supset \overline{\Omega_\C}$, we may define $\bar v := \hat v\circ \bar F$ on $\overline{A_{1,R}}$. The convexity at the boundary points is thus a simple consequence. Namely, for $t<0$ we have
\begin{align*}
L(0)&=\int_{|z|=1} e^{\bar v(z)}\,d\sigma 
\\&= \int_{|z|=1} e^{\bar v(z)}\,d\sigma -\int_{|z|=e^{-t}} e^{\bar v(z/|z|)}\,d\sigma + \int_{|z|=e^{-t}} e^{\bar v(z/|z|)}\,d\sigma - \int_{|z|=e^{-t}} e^{\bar v(z)}\,d\sigma +\int_{|z|=e^{-t}} e^{\bar v(z)}\,d\sigma \\
& = \int_{|z|=1} e^{\bar v(z)}\,d\sigma -\int_{|z|=1} e^{-t}e^{\bar v(z)}\,d\sigma +  \int_{|z|=e^{-t}} \left[e^{\bar v(z/|z|)}- e^{\bar v(z)}\right]\,d\sigma + L(t).
\end{align*}
Taking the $\limsup$ as $t\to 0^-$ gives $L(0)\ge \limsup_{t\to 0^-} L(t)$. Thus $L(t)$ is convex on $(t_1,0]$. A similar argument permits to deal with the other boundary point $t_1$.
\end{proof}
%\komG{In general $L(t)$ is likely non-continuous at boundary points. Think for instance at the case where the boundaries of $\Omega\subset \C$ are fractal curves of infinite length. I wonder if $L(t)$ is continuous on the whole $[t_1,t_2]$ at least when $\partial\Omega$ are two rectifiable curves of finite length.} 
\begin{rem}\label{rmk:AG}
	Notice that, since the integrand in \eqref{eq:lengthBIC} is the exponential of a subharmonic function, one can deduce the log-convexity of $L$ in $I_\epsilon$  from \cite[Theorem 3.5.7(ii)]{ArGa}. However, for the readers convenience and in order to present a self-contained argument, we decided to provide the complete discussion in the proof of Theorem \ref{thm-reshetnyak}.
\end{rem}

\section{Curvature of level sets and PDEs}

The purpose of this section is to study some curvature and length estimates for the level curves of harmonic functions on Riemannian $2$-manifolds. Similar studies in the setting of planar harmonic functions and more general quasilinear equations have been conducted by e.g. Alessandrini~\cite{al} and Talenti~\cite{tal}. In the setting of manifolds, let us mention results by Ma--Qu--Zhang~\cite{moz}, Ma--Zhang~\cite{mz} and Wang--Wang~\cite{ww}. The key results in those papers are obtained assuming that the Gaussian curvature of the surface is constant, or on higher-dimensional space-forms . One of the main novelties of our work is to allow the curvature to vary. Also, we study more types of curvatures of level sets than in the aforementioned papers; see the discussion before the statement of the main result of this section, Theorem~\ref{thm-pdes}. In its corollaries we investigate the weak and strong maximum and minimum principles for curvatures of level curves (Corollaries~\ref{c: k}\,--\,\ref{c:cor3.7}) and further estimates for the length of level curves and its derivative (Corollary~\ref{c:cor3.8-2ndversion}).

 We begin the presentation of the results with a technical lemma, which provides the key identities for computations in the proof of Theorem~\ref{thm-pdes}. The second assertion of the following lemma generalizes~\cite[Lemma 2.3]{ww} to the setting of non-constant sectional curvature (see identities (2.3) and (2.10) in~\cite[Lemma 2.1]{cm} for the proof of the lemma).
 
% Although the lemma is purely computational and certainly known to experts in the field, we did not find its \kom{precise statement} in the literature and, therefore, provide the proof in Appendix B. Moreover, we  believe that Lemma~\ref{lem-rr} will turn out to be a useful tool for researchers studying non-constant curvature results on $n$-manifolds. 
% 
%  \komG{I've moved the sentence \textit{Below we use the Einstein summation convention that the repeated index $k$ is summed from $1$ to $n$.} inside the proof.}
\begin{lem}\label{lem-rr}
 Let $u:M^n\to \R$ be a smooth function on a Riemannian manifold $M^n$ (with possibly non-constant sectional curvature). Then
\begin{align}
 u_{ijk}&=u_{ikj}+u_l R_{lijk} \label{lem-rr1}\\
	\sum_k u_{ijkk}&=\sum_ k u_{kkij}-\sum_{l,m} 2u_{lm}R_{iljm}\nonumber\\
	&+\sum_l\left[\Ric_{il}u_{lj}+\Ric_{jl}u_{li}+u_l(\nabla_i\Ric)_{lj}+u_l(\nabla_j\Ric)_{li}-u_l(\nabla_l\Ric)_{ji}\right],
\label{lem-rr2}
%
%\sum_k u_{ijkk}&=\sum_ k u_{kkij}-\sum_{l,m} 2u_{lm}R_{iljm}-\sum_l\left[\Ric_{il}u_{lj}+\Ric_{jl}u_{li}+(\nabla_i\Ric)_{jl}u_l\right],
% \label{lem-rr2}
\end{align} 
where $R_{ijkl}$ and $\Ric_{ij}$ stand, respectively, for the coefficients of the curvature tensor and of the Ricci tensor, while a function with subscript indices, e.g. in the expression $u_{ij}$, denote the (higher order) covariant derivative of the function with respect to a local orthonormal frame.
% \komG{I have settled the coordinate frame in this statement. One can not raise and lower indexes for free otherwise. As well as Einstein convention over both lower (or both upper) indexes makes no sense }
%
%\komT{1. What is a coordinate system orthonormal at $x$? 2. Notation here differs from the one used in the proof in App. B: $R^l_{ijk}$ vs. $R_{lijk}$}
%
%\komG{I mean that $g_{ij}(x)=\delta_{ij}$ for every $i,j$. Accordingly $R^l_{ijk}=g^{lq}R_{qijk}=R_{lijk}$, which makes the notation consistent}
%
%\komT{Thank you for clarifying. Do you think that there is a need to extend the statement of the lemma and inlcude information from your comment? For me it would be helpful, but perhaps for the differential geometer-reader this is obvious and does not require further explanation? }
\end{lem}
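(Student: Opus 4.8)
The plan is to derive both identities by iterated commutation of covariant derivatives, using the standard Ricci identity as the single basic ingredient. Throughout I work in a local orthonormal frame $\{e_i\}$, geodesic at the point under consideration, so that all Christoffel symbols vanish there and I need not track connection terms; since both sides of \eqref{lem-rr1} and \eqref{lem-rr2} are tensorial, establishing them at an arbitrary point in such a frame suffices. The Ricci commutation identity for a $1$-form, applied to $du$, reads $u_{ijk}-u_{ikj}=\sum_l u_l R_{lijk}$ with my index/sign conventions for $R_{lijk}$; this is exactly \eqref{lem-rr1}, so the first identity is immediate once conventions are fixed. The only subtlety there is bookkeeping: I must state explicitly which convention for the Riemann tensor is in force (so that the sign and index order in \eqref{lem-rr1} match the one later used in Appendix~B), and check consistency with the contracted version $\sum_k u_{ikk}=(\Delta u)_i=0$ that I will invoke repeatedly since $u$ is (eventually) harmonic — though note the lemma as stated only needs $u$ smooth.

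For \eqref{lem-rr2} I would compute $\sum_k u_{ijkk}$ by commuting the indices one transposition at a time until the two fixed indices $i,j$ sit at the far right, each commutation producing curvature terms via \eqref{lem-rr1}, and then differentiate those curvature terms once more where needed. Concretely: first write $u_{ijk}=u_{ikj}+\sum_l u_l R_{lijk}$, then differentiate in $k$ and sum; this yields $\sum_k u_{ijkk}=\sum_k u_{ikjk}+\sum_{k,l}(u_{lk}R_{lijk}+u_l (\nabla_k R)_{lijk})$. Next commute the middle pair in $u_{ikjk}$ (swap the $j$ and the second $k$) using \eqref{lem-rr1} again, producing $u_{ikkj}$ plus another curvature contraction; the term $\sum_k u_{ikkj}=(\Delta u)_{ij}=\sum_k u_{kkij}$ is the desired leading term. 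Finally, the second Bianchi identity $\sum_k(\nabla_k R)_{lijk}$ must be rewritten as a combination of $\nabla\Ric$ terms, and the curvature contraction $\sum_{k,l}u_{lk}R_{lijk}$ plus the leftover commutator terms must be reorganized, using the first Bianchi identity and the symmetries $R_{ijkl}=R_{klij}=-R_{jikl}$, into the symmetric Hessian–Ricci combination $\sum_l(\Ric_{il}u_{lj}+\Ric_{jl}u_{li})$ together with the three $\nabla\Ric$ Hessian terms $u_l(\nabla_i\Ric)_{lj}+u_l(\nabla_j\Ric)_{li}-u_l(\nabla_l\Ric)_{ji}$ and the $R_{iljm}u_{lm}$ term displayed in \eqref{lem-rr2}. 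I would double-check the final grouping by contracting $i=j$ and summing, which must reproduce the Bochner formula $\Delta\tfrac{|\nabla u|^2}{2}=|\nabla^2 u|^2+\Ric(\nabla u,\nabla u)$ when $\Delta u=0$ (equivalently \eqref{formula-Bochner}) — a convenient internal sanity check, and also a natural place to cross-reference the derivation in \cite{cm}.

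The main obstacle is purely the combinatorics of signs and index orders: there are several transpositions, two Bianchi identities, and the tensor symmetries all entering, and the statement of \eqref{lem-rr2} contains a delicate asymmetry (the minus sign on $u_l(\nabla_l\Ric)_{ji}$ and the factor $2$ on $\sum_{l,m}u_{lm}R_{iljm}$) that only comes out correctly if every convention is pinned down in advance. I would therefore open the proof by fixing conventions once and for all, cite \cite[Lemma 2.1]{cm} for the analogous computation, and then carry out the commutations in the order: (i) peel one $k$ off using \eqref{lem-rr1}; (ii) commute $j$ past the inner $k$ using \eqref{lem-rr1}; (iii) identify $\sum_k u_{ikkj}$ with $\sum_k u_{kkij}$ using \eqref{lem-rr1} once more on the remaining index block; (iv) apply the contracted second Bianchi identity to the $\nabla R$ term; (v) regroup all algebraic curvature terms via the Bianchi and pair symmetries into the stated form. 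The rest is routine.
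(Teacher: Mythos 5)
Your plan is correct, and it is exactly the standard derivation: the paper itself gives no proof of Lemma~\ref{lem-rr}, deferring to identities (2.3) and (2.10) of \cite[Lemma 2.1]{cm}, and what you outline — iterated Ricci commutations, the contracted second Bianchi identity to turn $\sum_k(\nabla_kR)_{lijk}$ into $\nabla\Ric$ terms, and the pair/antisymmetry relations to produce the factor $2$ on $\sum_{l,m}u_{lm}R_{iljm}$ — is precisely that computation. One small precision worth adding when you write it out: \eqref{lem-rr1} is the Ricci identity for the $1$-form $du$, so in your step (ii), commuting the last two indices of $u_{ikjk}$ requires the Ricci identity for the $2$-tensor $\nabla^2u$, which produces \emph{two} curvature terms per transposition (one of which contracts to $\sum_l u_{il}\Ric_{lj}$), not a single contraction; together with the analogous term from step (iii) this is where the symmetric combination $\sum_l(\Ric_{il}u_{lj}+\Ric_{jl}u_{li})$ comes from. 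Your trace check against the Bochner formula \eqref{formula-Bochner} is a sound way to confirm the signs.
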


%\begin{remark}
%Since we choose a coordinate system which is orthonormal at $x$ (i.e. $g_{ij}(x)=\delta_{ij}$ for every $i,j=1,\dots,n$), we can freely raise and lower the indices in the tensors at $x$. For instance, one has that $R^l_{ijk}=g^{lq}R_{qijk}=R_{lijk}$. This property is  extensively used in the proof.
%\end{remark}

%\begin{proof}
 %Both assertions follow by direct, long and tedious computations, see the Appendix B for more details of the proof. 
%\end{proof}

Next, we generalize the main result of Wang--Wang~\cite[Theorem 1.3]{ww} and also Theorem 3 in Talenti~\cite{tal} in several different directions:
\begin{itemize}
\item[(1)] we study Riemannian manifolds with non-constant Gauss curvature $K=K(x)$ for $x\in M^2$, whereas in~\cite{ww} it is assumed that $K\equiv const$.
\item[(2)] While~\cite{ww} investigates only the case of \emph{the curvature of curves of the steepest descent} $h$ we study both $h$ and \emph{the curvature of the level curves} $k$:
\begin{equation}\label{eq-k-h}
 h:=\div\left(\frac{\star \nabla u}{|du|}\right),\qquad k:=-\div\left(\frac{\nabla u}{|\nabla u|}\right). 
\end{equation}
Here $\star$ denotes the Hodge star operator acting on tangent vectors. If we fix a local orthonormal frame $\{e_1,e_2\}$ and write $u_i=e_i(u)$, $i=1,2$, then $h$ can be written in the more friendly equivalent form
\[
h:=-\div\left(\frac{(u_2,-u_1)}{|\nabla u|}\right).
\]

\item[(3)] Equations~\eqref{pde1} and~\eqref{pde1-star} generalize the harmonicity result obtained for the planar case and $K=0$, see in~\cite[Theorem 3(i)]{tal}, while inequalities~\eqref{pde2} and \eqref{pde2-star} generalize the subharmonicity property for $h$ and $k$ in~\cite[Theorem 3(ii)]{tal}.

\item[(4)] The maximum and minimum principles for $\frac{k}{|\nabla u|}$ and $\frac{h}{|\nabla u|}$ presented in Corollaries~\ref{c: k} and~\ref{cor-up-lo-bound_2ndversion} below, generalize Corollary 1.1 in~\cite{ww} to the setting of surfaces with non-constant Gauss curvature.
\end{itemize}
 
\begin{theorem}\label{thm-pdes}
 Let $\Om\subset M^2$ be a domain in the Riemannian manifold with Gaussian curvature $K$ (not necessarily constant). Let further $u:\Om\to \R$ be a harmonic function with no critical points in $\Om$. Then the function $\phi=\frac{k}{|\nabla u|}$ satisfies the following differential equation:
\begin{equation}\label{pde1}
 \Delta \phi+2K\phi=\frac{\langle \nabla K,\nabla u\rangle}{|\nabla u|^2}.
\end{equation}
A similar equation holds true for $\frac{h}{|\nabla u|}$:
\begin{equation}\label{pde1-star}
 \Delta \left(\frac{h}{|\nabla u|}\right)+2K\frac{h}{|\nabla u|}=-\frac{\langle \nabla K,\star\nabla u\rangle}{|\nabla u|^2}.
\end{equation}
 Moreover, it holds that
\begin{equation}\label{pde2}
 -\Delta \ln |k| \geq K-\frac{1}{|k|}\langle \nabla K,\frac{\nabla u}{|\nabla u|}\rangle,\quad k\not=0.
\end{equation}
A similar inequality holds true for $|h|$ at points where $h\not=0$:
\begin{equation}\label{pde2-star}
	-\Delta \ln |h| \geq K+\frac{1}{|h|}\langle \nabla K,\frac{\star \nabla u}{|\nabla u|}\rangle,\quad h\not=0.
\end{equation}	
\end{theorem}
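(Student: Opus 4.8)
The plan is to work in isothermal (conformal) coordinates, exactly as in the proof of Lemma~\ref{lem:log}, since all assertions are pointwise and both harmonicity and the gradient non-vanishing condition are conformal invariants in dimension $2$. Thus we may assume $g = \lambda^2 (dx^2 + dy^2)$ with $\lambda > 0$ smooth. I would, however, prefer to carry out the computation in a local orthonormal frame adapted to $u$: at a point $p$ where $\nabla u(p) \ne 0$, choose $e_1 = \nabla u / |\nabla u|$ and $e_2 = \star e_1$, so that $u_1 = |\nabla u|$, $u_2 = 0$ at $p$. In this frame the geodesic curvature of the level curve is $k = -u_{22}/|\nabla u|$ and the curvature of the steepest descent is $h = -u_{12}/|\nabla u|$ (up to sign conventions), and harmonicity gives $u_{11} + u_{22} = 0$. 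The key identity to establish is that $\frac{k}{|\nabla u|} = -\frac{u_{22}}{|\nabla u|^2}$ and $\frac{h}{|\nabla u|} = -\frac{u_{12}}{|\nabla u|^2}$ can be written, in a frame-independent way, as second derivatives of the angle function $\vartheta$ of $\nabla u$ (or of $\log|\nabla u|$); indeed, from Lemma~\ref{lem:log} we already know $\Delta \log|\nabla u| = K$, and I expect $\frac{k}{|\nabla u|}$ and $\frac{h}{|\nabla u|}$ to be expressible through first derivatives of $\log|\nabla u|$ and of the conjugate harmonic angle.

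The main computational engine is Lemma~\ref{lem-rr}: applying~\eqref{lem-rr2} to the harmonic function $u$ with $n=2$, where $\Ric_{ij} = K\,\delta_{ij}$ and $R_{iljm} = K(\delta_{ij}\delta_{lm} - \delta_{im}\delta_{lj})$, one obtains a formula for $\Delta u_{ij}$ (i.e.\ $\sum_k u_{ijkk}$) in terms of $u_{ij}$, $K u_{ij}$, and first derivatives of $K$ contracted with $\nabla u$. The remaining steps are then bookkeeping: first differentiate the defining relation $\phi = \frac{k}{|\nabla u|} = -\frac{u_{22}}{|\nabla u|^2}$ (or better, pass through an auxiliary expression not tied to the adapted frame, e.g.\ $\phi |\nabla u|^3 = $ a fixed combination of $u_i$ and $u_{ij}$), compute $\Delta \phi$ using the product rule together with $\Delta|\nabla u|^2 = 2|\nabla^2 u|^2 + 2K|\nabla u|^2$ from~\eqref{formula-Bochner}, substitute the Bochner-type identity for $\Delta u_{ij}$, and collect terms. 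The identity $|\nabla^2 u|^2 = 2|\nabla|\nabla u||^2$ from Lemma~\ref{lem: kato} (equation~\eqref{refined-Kato2}) will be needed to simplify the Hessian-squared terms. The result should collapse to~\eqref{pde1}; equation~\eqref{pde1-star} for $h$ follows by the same argument after replacing $\nabla u$ with $\star \nabla u$ (which is also harmonic in the sense that it is the gradient of the conjugate harmonic function locally, so the roles of $u_{12}$ and $u_{22}$ are interchanged and $\langle \nabla K, \nabla u\rangle$ becomes $-\langle \nabla K, \star \nabla u\rangle$).

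For the two inequalities~\eqref{pde2} and~\eqref{pde2-star}, the plan is to start from the linear equation just proved. Writing $\phi = k/|\nabla u|$, one has $k = \phi\,|\nabla u|$, hence $\ln|k| = \ln|\phi| + \ln|\nabla u|$, so $\Delta \ln|k| = \Delta\ln|\phi| + \Delta\ln|\nabla u| = \Delta\ln|\phi| + K$ by Lemma~\ref{lem:log}. It remains to bound $\Delta \ln|\phi|$ from above. Using $\Delta\ln|\phi| = \frac{\Delta\phi}{\phi} - \frac{|\nabla\phi|^2}{\phi^2} \le \frac{\Delta\phi}{\phi}$ (valid where $\phi \ne 0$, with the sign depending on $\mathrm{sgn}\,\phi$ — one should be slightly careful and phrase it as $\phi\,\Delta\ln|\phi| \le \Delta\phi$ when $\phi>0$ and reverse when $\phi<0$, or just work with $\ln|\phi|$ directly noting $\Delta\ln|\phi|\le \Delta\phi/\phi$ holds with the understanding that this is $\mathrm{sgn}(\phi)(\cdots)$), and substituting~\eqref{pde1} gives $\frac{\Delta\phi}{\phi} = -2K + \frac{1}{\phi}\frac{\langle\nabla K,\nabla u\rangle}{|\nabla u|^2} = -2K + \frac{1}{k}\langle \nabla K, \frac{\nabla u}{|\nabla u|}\rangle$. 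Therefore $\Delta\ln|k| \le -2K + \frac{1}{k}\langle\nabla K,\frac{\nabla u}{|\nabla u|}\rangle + K = -K + \frac{1}{k}\langle\nabla K,\frac{\nabla u}{|\nabla u|}\rangle$, which rearranges to~\eqref{pde2} once one checks the sign/absolute-value conventions make $\frac{1}{k}\langle\nabla K,\cdot\rangle = \frac{1}{|k|}\langle\nabla K,\pm\cdot\rangle$ match. The inequality~\eqref{pde2-star} is obtained identically from~\eqref{pde1-star}.

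The hardest part is neither the reduction to coordinates nor the passage from the PDE to the inequalities, but the middle step: correctly executing the covariant-derivative bookkeeping in~\eqref{lem-rr2} with all the non-constant-curvature terms $u_l(\nabla_i\Ric)_{lj}$ etc., and then tracking every term through the computation of $\Delta\phi$ without sign errors — in particular verifying that the $|\nabla^2 u|^2$ and $|\nabla|\nabla u||^2$ contributions cancel against each other via~\eqref{refined-Kato2} and that the first-derivative-of-$K$ terms assemble precisely into $\langle\nabla K,\nabla u\rangle/|\nabla u|^2$. One clean way to organize this is to differentiate the scalar identity $k|\nabla u| = u_1^2 u_{22} - 2 u_1 u_2 u_{12} + u_2^2 u_{11}$ (valid in any local orthonormal frame, up to sign), divide by $|\nabla u|^4$ to get $\phi$, and then apply $\Delta$ to this rational expression in $u_i, u_{ij}$, feeding in $\Delta u_i = 0$ (Bochner for the first derivatives, i.e.\ $u_i$ is not harmonic but $\Delta u_i = \Ric_{il}u_l = K u_i$) and the formula for $\Delta u_{ij}$ from Lemma~\ref{lem-rr}. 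I expect the computation, while long, to be forced at every step, with the curvature-derivative terms appearing only linearly and surviving exactly as stated on the right-hand sides.
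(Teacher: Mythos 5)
Your proposal follows essentially the same route as the paper's Appendix B: an orthonormal frame adapted to $\nabla u$ (your index convention merely swaps $e_1$ and $e_2$), the commutation identities of Lemma~\ref{lem-rr} to handle the non-constant curvature terms, the Bochner formula and the Kato equality~\eqref{refined-Kato2} to collapse the Hessian terms, and for~\eqref{pde2}--\eqref{pde2-star} the decomposition $\Delta\ln|k|=\Delta\ln|\phi|+\Delta\ln|\nabla u|$ with the nonpositive term $-|\nabla\phi|^2/\phi^2$ discarded (the paper re-expands $\Delta\ln|\nabla u|$ via Bochner--Kato rather than quoting Lemma~\ref{lem:log}, but that is the same content). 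The plan is correct; only note the slip that your scalar identity should read $k|\nabla u|^{3}$, not $k|\nabla u|$, on the left-hand side, which is consistent with your subsequent division by $|\nabla u|^{4}$.
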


The proof of the above theorem is largely computational and, therefore, we present it in Appendix B. Here instead we focus on discussing some consequences and applications of Theorem~\ref{thm-pdes}. 
%\komG{we will say it again after the following paragraph}\komT{Edited the text. Some repetition seems to be necessary, so that the reader can see where the discussion of application starts.}

First, let us comment on conditions implying that a harmonic function has no critical points in the domain of its definition.
% \komT{I modified the whole paragraph below basing on Magnanini's \cite{mag} survey, section 3.1 (pg. 12-13) and section 3.2 (pg. 16). His statement on pg. 16 is vague and so what I wrote below is vague as well, but sufficient for our needs.}
%\komT{(added on 21.I) I gave another look to Magnanini's paper. I don't see why in his figure 3.2, pg. 13 $M^-=4$ not $5$? Also, it seems to me that the formula for $u_l=l\cdot \nabla u$ on the same page should contain $\theta-\omega$ (this depends on what he means by $\nabla u$), but despite these possible mistakes Magnanini's idea is fine. For instance, let us consider the unit disc in the plane, a vector field $l=z/|z|$ and any boundary data for a harmonic dirichlet problem such that $u_l$ has two zeros on the unit circle. Then, according to Thm 3.2 it holds that $M^+=M^-=1$ and by the argument principle for $l$ we have $D=1$ (as $D$=no. of zeros for $l$ - no. of poles for $l$ in the unit disc). Then we have no critical points for $u$ inside the disc.}\komG{Very nice! this example could be added in the discussion, I think}
 As observed in the previous section,  a harmonic solution $u$ to the Dirichlet problem with constant boundary data on an annular domain satisfies $|\nabla u|>0$. Moreover, let $u$ solve the harmonic Dirichlet problem in a domain $\Om\subset M^2$ (not necessarily an annulus) and $u\in C^1(\overline{\Om})$. According to the discussion following Theorem 3.2 in~\cite{mag} in the planar Euclidean case (zero curvature), one can estimate the number of critical points of $u$ inside $\Om$ in terms of information given by prescribed tangential, normal, co-normal, partial, or radial derivatives of $u$. This is due to the flexibility of assumptions in \cite[Theorem 3.2]{mag}; we also refer to Theorems 2.1 and 2.2 in~\cite{almag} for further details. In particular, the appropriate Neumann condition may result in no critical points of $u$. For instance, consider the planar unit disc, a vector field $l=z/|z|$ and any boundary data for a harmonic Dirichlet problem such that $u_l$ has two zeros on the unit circle. Then, according to ~\cite[Theorem 3.2]{mag} (and with its notation), it holds that $M^+=M^-=1$ and by the argument principle for $l$ we have $D=1$. Thus, $u$ has no critical points inside the disc.   
 As commented in \cite[Section 3.2]{mag}, corresponding results can be obtained for harmonic functions defined on domains in Riemannian $2$-manifolds $M^2$. 
%with the $C^1$-boundary $\phi$ such that the horizontal part  of the gradient satisfies $|\nabla_{H} \phi|>c>0$. \komG{what does that mean exactly? phi is defined on the boundary or on the domain?} \komT{See changes.} Then the Hopf lemma applied to $u$ gives that also $|\nabla u|>c>0$ in $\Om$. \komG{I'm still confused about this point. $\phi$ is defined on the whole $\Omega$ or only on its boundary? What is the horizontal part of the gradient? And how do you apply Hopf lemma here ? A priori it gives information only at minimum (resp. maximum) points for $u$ } Notice that this condition gives also an upper uniform bound in $\Om$ for $\frac{1}{|\nabla u|}$ in~\eqref{pde1}. 

%The lower uniform bound for  $\frac{1}{|\nabla u|}$ can be inferred, for instance, when $K\geq 0$ in $\Om$. Then $\Delta |\nabla u|\geq 0$ and so the local supremum estimates for $|\nabla u|$ holds provided that $\Om$ is enough far away from the boundary of $\overline{M^2}$. 
%\komG{where do we need a uniform lower bound?}\komT{It used to be a part of assumptions in Cor. 4.5 but no longer is. I commented out the sentences}

We proceed now to present some of the consequences of Theorem~\ref{thm-pdes}.

%First, observe that by continuity of $k$ we may assume, without the loss of generality, that $k>0$ in a given domain in $M$. 
%\komG{Why $k$ can not be $<0$?}\komT{It can, but it cannot be both in $\Om$, but I removed this sentence and modified the statement and the proof of Cor.~\ref{c: k} anyway.}

\begin{cor}\label{c: k}
Let $u$ be harmonic in a domain $\Om\subset M^2$ and $\Om'\Subset \Om$ be any subdomain such that $\overline{\Om'}$ does not contain critical points of $u$. If $K\geq 0$ and $\langle \nabla K,\nabla u\rangle\leq 0$ in $\Om'$, then $|k|$ attains its minimum on the boundary of $\Om'$ (provided that $k\neq 0$ in $\Omega$). 

Similarly, if $K\geq 0$ and $\langle \nabla K,\star \nabla u\rangle\geq 0$ in $\Om'$, then $|h|$ attains its minimum on the boundary of $\Om'$ (provided that $h\neq 0$ in $\Omega$).
\end{cor}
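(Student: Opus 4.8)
The plan is to read the conclusion straight off the differential inequalities \eqref{pde2} and \eqref{pde2-star} of Theorem~\ref{thm-pdes}, combined with the weak minimum principle for superharmonic functions. First I would fix the regularity setup: since $\Om'\Subset\Om$ and $\overline{\Om'}$ contains no critical points of the harmonic (hence smooth) function $u$, the quantities $\nabla u$, $|\nabla u|$, $k$ and $h$ are smooth on $\overline{\Om'}$ with $|\nabla u|>0$ there, and $K$ is smooth as well. Under the standing hypothesis $k\neq 0$ (respectively $h\neq 0$) on $\Om\supset\overline{\Om'}$, the functions $\ln|k|$ and $\ln|h|$ are therefore finite and smooth on the compact set $\overline{\Om'}$, and the pointwise inequalities \eqref{pde2}--\eqref{pde2-star} hold at every point of $\overline{\Om'}$.

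For the assertion about $|k|$: assuming $K\ge 0$ and $\langle\nabla K,\nabla u\rangle\le 0$ on $\Om'$, the right-hand side of \eqref{pde2} is a sum of two nonnegative terms, the first because $K\ge 0$ and the second because $-\tfrac{1}{|k|}\langle\nabla K,\nabla u/|\nabla u|\rangle\ge 0$ (here one uses $|k|>0$ and $\langle\nabla K,\nabla u\rangle\le 0$). Hence $-\Delta\ln|k|\ge 0$ on $\Om'$; that is, $\ln|k|$ is superharmonic. The weak minimum principle for superharmonic functions on the bounded domain $\Om'$ — applicable since $\ln|k|\in C^2(\Om')\cap C(\overline{\Om'})$; cf. the elliptic minimum principle as in \cite[Chapter 3.2]{gt} — then yields $\min_{\overline{\Om'}}\ln|k|=\min_{\partial\Om'}\ln|k|$, and since $\exp$ is increasing this is equivalent to $\min_{\overline{\Om'}}|k|=\min_{\partial\Om'}|k|$, i.e. $|k|$ attains its minimum on $\partial\Om'$.

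The statement about $|h|$ follows in exactly the same way, now from \eqref{pde2-star}: the hypotheses $K\ge 0$ and $\langle\nabla K,\star\nabla u\rangle\ge 0$ make the right-hand side of \eqref{pde2-star} nonnegative (the extra term enters there with a plus sign), so $\ln|h|$ is superharmonic on $\Om'$ and hence attains its minimum over $\overline{\Om'}$ on $\partial\Om'$.

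There is essentially no serious obstacle; the content is entirely contained in Theorem~\ref{thm-pdes}. The only points needing a word of care are (i) that $\ln|k|$ (resp. $\ln|h|$) is a genuine finite $C^2$ function on $\overline{\Om'}$ — which is precisely what the no-critical-points and nonvanishing hypotheses secure — and (ii) the choice of version of the minimum principle: the weak (boundary) minimum principle suffices for the stated conclusion, though one could equally invoke the strong minimum principle to conclude that the minimum is attained \emph{only} on $\partial\Om'$ unless $|k|$ (resp. $|h|$) is constant on $\overline{\Om'}$.
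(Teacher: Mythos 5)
Your proposal is correct and follows essentially the same route as the paper: both deduce $-\Delta\ln|k|\ge K\ge 0$ (resp. $-\Delta\ln|h|\ge K\ge 0$) directly from \eqref{pde2} and \eqref{pde2-star} under the stated sign hypotheses, and then apply the minimum principle to the superharmonic function $\ln|k|$ (resp. $\ln|h|$). Your added remarks on regularity and on the weak versus strong minimum principle are consistent with the paper's (terser) argument.
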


\begin{proof}
By assumptions of the corollary, we may infer from~\eqref{pde2} that 
\[
0\le K\leq -\Delta \ln |k| + \frac{\langle \nabla K,\nabla u\rangle}{|\nabla u|\,|k|}\leq  -\Delta \ln |k|.
\]
 Then, the minimum principle for the inequality $\Delta \ln |k| \leq 0$ implies that $|k|$ attains its minimum on the boundary of $\Om'$. The assertion involving $h$ is proved similarly.
\end{proof}

%As for the equation
%\[
% \Delta v=-2Kv+\frac{(\nabla_2 \Ric)_{22}}{|\nabla u|},
%\]
%it is satisfied both by $|\nabla u|^{-1} J$ and $|\nabla u|^{-1} k$. If $K\geq 0$, then $\Delta |\nabla u|\geq 0$ and so the local supremum estimates for $|\nabla u|$ hold on $\Om'\Subset \Om$ such that $\Om'$ is enough far away from the boundary of $\Om$. The lower estimate for $|\nabla u|$ can be obtained if we assume that $|\nabla u |>const$ on $\Om'$. This holds, for instance if we require $u$ to be a solution to the Dirichlet problem on $\Om$ with boundary data $\phi$ such that $|\nabla \phi|>const$. Thus, we have the uniform bound for $|\nabla u|$ in $\Om'$.
The minimum and the maximum principles for equation~\eqref{pde1} imply the following observation. Note that by assumptions of the corollary below, $|\nabla u|$ is bounded from below and above in $\overline{\Om'}$ for $\Om'\Subset \Om$, since $\Om$ is a domain and thus bounded by the definition. 
%Furthermore, observe that by continuity of $k$ and $h$ we may assume, without the loss of generality, that both $k>0$ and $h>0$ in a given domain in $M$. 
\begin{cor}\label{cor-up-lo-bound_2ndversion}
	 Let $u$ be harmonic in a domain $\Om\Subset M^2$ such that $\overline{\Om}$ does not contain critical points of $u$.

\begin{enumerate}
\item Let $K\le 0$ and $\langle\nabla K, \nabla u\rangle\!\ge\! 0$. If the maximum of $|\nabla u|^{-1}k$ is non-negative, then it is attained on $\partial \Omega$. 
	\item Let $K\le 0$ and $\langle\nabla K, \nabla u\rangle \le 0$. If the minimum of $|\nabla u|^{-1}k$ is non-positive, then it is attained on $\partial \Omega$.
		\item Let $K\ge 0$ and $\langle\nabla K, \nabla u\rangle \ge 0$. If the maximum of $|\nabla u|^{-1}k$ is non-positive, then it is attained on $\partial \Omega$. 
	\item Let $K\ge 0$ and $\langle\nabla K, \nabla u\rangle\!\le 0$. If the minimum of $|\nabla u|^{-1}k$ is non-negative, then it is attained on $\partial \Omega$.
\end{enumerate}	 

The same conclusions hold for $|\nabla u|^{-1}h$ up to replacing $\langle\nabla K, \nabla u\rangle$ with $\langle\nabla K, -\star \nabla u\rangle $ in the assumptions.	 
	\end{cor}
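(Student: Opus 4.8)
The plan is to read off all four statements directly from the differential equation \eqref{pde1} of Theorem~\ref{thm-pdes} (and, for the $h$-part, from \eqref{pde1-star}), combined with the classical weak maximum and minimum principles for second order elliptic operators on a precompact domain. Set $\phi=\frac{k}{|\nabla u|}=|\nabla u|^{-1}k$; since $u$ is harmonic with no critical points on the compact set $\overline{\Om}$, we have $|\nabla u|\geq c_0>0$ there, so $\phi$ is smooth in $\Om$ and continuous on $\overline{\Om}$, and the same is true of $|\nabla u|^{-1}h$. By \eqref{pde1},
\[
\Delta\phi+2K\phi=f,\qquad f:=\frac{\langle\nabla K,\nabla u\rangle}{|\nabla u|^2},
\]
and the four cases differ only in the sign of $K$ and the sign of $f$.

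In cases (1) and (2), where $K\leq 0$, I would work with the operator $L:=\Delta+2K$, whose zeroth order coefficient $2K$ is non-positive, so the classical weak maximum principle (e.g.\ \cite[Theorem 3.1]{gt}, applied to the Laplace--Beltrami operator on the precompact domain $\Om$) applies: an $L$-subsolution cannot attain a non-negative interior maximum unless it is constant, and an $L$-supersolution cannot attain a non-positive interior minimum unless constant. For (1), $\langle\nabla K,\nabla u\rangle\geq 0$ gives $f\geq 0$, hence $L\phi\geq 0$, so a non-negative maximum of $\phi$ is attained on $\partial\Om$. For (2), $\langle\nabla K,\nabla u\rangle\leq 0$ gives $f\leq 0$, hence $L\phi\leq 0$, so a non-positive minimum of $\phi$ is attained on $\partial\Om$.

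Cases (3) and (4), where $K\geq 0$, need the small extra observation that the hypothesis on $\phi$ already fixes its sign, which then makes the term $2K\phi$ cooperative so that $\phi$ becomes sub- or super-harmonic for the bare Laplacian. In (3), $\max_{\overline\Om}\phi\leq 0$ means $\phi\leq 0$ on $\Om$; with $K\geq 0$ this gives $-2K\phi\geq 0$, and $\langle\nabla K,\nabla u\rangle\geq 0$ gives $f\geq 0$, so $\Delta\phi=f-2K\phi\geq 0$ and $\phi$ attains its maximum on $\partial\Om$. In (4), $\min_{\overline\Om}\phi\geq 0$ means $\phi\geq 0$; with $K\geq 0$ and $\langle\nabla K,\nabla u\rangle\leq 0$ we get $\Delta\phi=f-2K\phi\leq 0$, so $\phi$ attains its minimum on $\partial\Om$. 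Finally, the statements for $|\nabla u|^{-1}h$ follow by the identical argument applied to \eqref{pde1-star}, whose right-hand side is $-\langle\nabla K,\star\nabla u\rangle/|\nabla u|^2=\langle\nabla K,-\star\nabla u\rangle/|\nabla u|^2$; this is exactly why $\langle\nabla K,\nabla u\rangle$ is replaced by $\langle\nabla K,-\star\nabla u\rangle$ in the $h$-version.

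I do not expect a genuine obstacle: once Theorem~\ref{thm-pdes} is available, the corollary is a routine application of the maximum principle. The only point requiring care is that in cases (3)--(4) the operator $\Delta+2K$ does not by itself satisfy the sign hypothesis of the classical maximum principle, so one must not invoke it blindly; instead one exploits the a priori sign of $\phi$ imposed by the hypothesis to reduce to the Laplacian. One should also note that the relevant extremum is in fact attained, which is automatic since $\overline{\Om}$ is compact and $\phi$ is continuous up to the boundary, and that in the constant case the extremum is trivially attained on $\partial\Om$ as well.
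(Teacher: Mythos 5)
Your proof is correct and follows exactly the route the paper intends: the paper gives no detailed argument, merely stating that the maximum and minimum principles for equation \eqref{pde1} imply the corollary, and your write-up supplies precisely those details. Your observation that in cases (3)--(4) one cannot invoke the classical principle for $\Delta+2K$ directly (since $2K\ge 0$) but must instead use the a priori sign of $\phi$ to reduce to the Laplacian is the right way to handle the one subtlety the paper leaves implicit.
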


The above corollary together with Corollary~\ref{c: k} generalize~\cite[Corollary 1.1]{ww}, where the case of constant $K$ and only the curvatures $h$ and $\frac{h}{|\nabla u|}$ are considered.

%Note that, by Corollary 1.3 in~\cite{pli}, if $-1\leq K\leq 0$ 
%%\komG{By a global rescaling, we could replace $-1$ by any constant. It may deserve to be noticed that the lower bound value $-1$ is not relevant?}\komT{True, modified.}
% and $u>0$ is harmonic on a complete surface $M$, then $|\nabla u|\leq u$ and the upper bound of $u$ in $\Om$ implies part of assumptions of the second assertion in Corollary~\ref{cor-up-lo-bound}. Moreover, by a global rescaling we could replace the lower bound value $-1$ for $K$ by any constant.
\begin{ex}
 The class of surfaces and harmonic functions satisfying assumptions of Corollary~\ref{cor-up-lo-bound_2ndversion}~(1) contains, for instance, all conformally Euclidean annular domains, with the conformal factor satisfying $\Delta\varphi\geq 0 (\Leftrightarrow K\leq 0)$ and $\partial_r (e^{2\varphi}\Delta\varphi)\leq 0$. The latter condition is equivalent to $\langle \nabla u, \nabla K \rangle\geq 0$, as upon a conformal deformation and for a constant boundary data, it holds that $\nabla u$ is parallel to $\partial_r$ resulting in $\partial_r K=\partial_r (-e^{2\varphi}\Delta\varphi)$. Similar classes of examples can be constructed also for the assumptions of Corollary~\ref{cor-up-lo-bound_2ndversion}~(2), (3) and (4).
\end{ex}%\komG{I think there was some mistake with signs. I changed accordingly}

%\komT{For all corollaries below one needs $\Om\Subset M$, since we are evaluating $k$, $|\nabla u|$ on $\partial \Om$.}

Next, we obtain a minimum principle for the functions $k$ and $h$. 
\begin{cor}\label{c:cor3.7}
 Let $u$ be harmonic in a domain $\Om\Subset M^2$ such that $\overline{\Om}$ does not contain critical points of $u$. If nonconstant $k$ attains a minimum at a point $y$ in the interior of $\Omega$, then at $y$ one has necessarily $k\le |\nabla K|/K$.
Similarly, if nonconstant $h$ attains a minimum at a point $y$ in the interior of $\Omega$, then at $y$ one has necessarily $h\le |\nabla K|/K$.
%\komG{I replaced the previous version \textit{"Let $k\ge 1$ and $|\nabla K|\le K$ in $\Om$. Then, it holds that a non-constant $k$ cannot achieve a minimum in $\Om$.  Similarly, if $h\geq 1$, then a non-constant $h$ cannot achieve a minimum in $\Om$."} The new one is slightly more involved, but permits to recover the case of constant $K$ in \cite{ww} without the additional assiumption $k\ge 1$.}
%Next, let $k<1$ and $K\geq 0$ in $\Om$. Let further $\Om$ be covered by one map (e.g. $\Om\subset B_r\Subset M$).
% 
% \komT{The last assumption allows us to use~\cite{gt} book, but is there an ABP max. principle directly on domains of manifolds? I couldn't find a precise reference, only some results on balls (by X. Cabre).}
% 
% Then the following maximum principle holds:
% \[
%  \sup_{\Om} k \leq \sup_{\partial \Om} k+C(\diam\,\Om)\|\nabla K\|_{L^{\infty}(\Om)}.
% \]
 \end{cor}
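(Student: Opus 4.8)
The plan is to derive the claimed pointwise bound directly from the differential inequality \eqref{pde2} of Theorem~\ref{thm-pdes} by applying the second-derivative test at an interior minimum. First I would record that, since $u$ has no critical points in $\Om$, the unit field $\nu:=\nabla u/|\nabla u|$ is smooth, and hence so are $k=-\div\nu$ and $h=\div(\star\nabla u/|\nabla u|)$. Next I would dispose of the trivial cases: if $k(y)\le 0$, then the asserted inequality $k(y)\le|\nabla K(y)|/K(y)$ holds trivially, its right-hand side being nonnegative whenever $K(y)>0$ (which is the only regime in which the statement carries content). Thus we may assume $k(y)>0$, so that $\ln|k|=\ln k$ is smooth in a neighbourhood of $y$ and, like $k$, attains a local minimum at the interior point $y$.

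Then, by the second-derivative test, $\nabla\ln k(y)=0$ and $\nabla^2\ln k(y)$ is positive semidefinite; tracing the Hessian gives $\Delta\ln k(y)\ge 0$, i.e. $-\Delta\ln|k|(y)\le 0$. Inserting this into \eqref{pde2} at the point $y$ produces
\[
0 \ge -\Delta\ln|k|(y) \ge K(y)-\frac{1}{k(y)}\big\langle\nabla K(y),\nu(y)\big\rangle,
\]
so that $K(y)\le k(y)^{-1}\langle\nabla K(y),\nu(y)\rangle$. Since $|\nu(y)|=1$, the Cauchy--Schwarz inequality yields $K(y)\,k(y)\le|\nabla K(y)|$, and dividing by $K(y)>0$ gives exactly $k(y)\le|\nabla K(y)|/K(y)$.

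Finally, the statement for $h$ is obtained by the identical argument with \eqref{pde2-star} in place of \eqref{pde2} and the unit field $\tau:=\star\nabla u/|\nabla u|$ in place of $\nu$: in the nontrivial case $h(y)>0$ one gets $-\Delta\ln|h|(y)\le 0$, hence $K(y)\le -h(y)^{-1}\langle\nabla K(y),\tau(y)\rangle\le|\nabla K(y)|/h(y)$, so $h(y)\le|\nabla K(y)|/K(y)$ when $K(y)>0$; the case $h(y)\le 0$ is again trivial. I expect the only delicate point to be the bookkeeping around the signs of $k(y)$ (resp. $h(y)$) and of $K(y)$ — specifically, making sure that $\ln|k|$ is smooth and the second-derivative test is legitimate precisely where the inequality has content. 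No analytic input beyond Theorem~\ref{thm-pdes} and the elementary interior-extremum principle already exploited in Corollaries~\ref{c: k}--\ref{cor-up-lo-bound_2ndversion} is required.
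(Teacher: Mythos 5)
Your proof is correct, but it takes a genuinely different route from the paper's. The paper argues by contradiction: assuming $k>|\nabla K|/K$ in a neighbourhood of $y$, it substitutes $v=1/k$ into the inequality $-\Delta\ln k+|\nabla K|/k\ge K$ to obtain $\Delta v\ge 0$, and then invokes the Hopf \emph{strong} maximum principle at the interior maximum of $v$ to force $k$ to be constant — which is exactly why the hypothesis ``nonconstant $k$'' appears in the statement. You instead apply the elementary second-derivative test directly: at an interior minimum of $\ln k$ one has $\Delta\ln k(y)\ge 0$, which fed into \eqref{pde2} together with Cauchy--Schwarz gives $K(y)k(y)\le|\nabla K(y)|$ pointwise. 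Your argument is more elementary (no strong maximum principle), never uses the nonconstancy of $k$, and avoids the neighbourhood/continuity issues implicit in the paper's contradiction hypothesis; it therefore proves a marginally stronger statement. Both arguments share the same implicit restriction that the conclusion is only meaningful where $K(y)>0$ (for $K(y)\le 0$ neither derivation yields $k\le|\nabla K|/K$ as literally written), and you are right to flag this explicitly; the paper's own proof also breaks down in that regime, since the step $|\nabla K|v^2-Kv\le 0$ requires $K>0$. The treatment of $h$ via \eqref{pde2-star} is handled correctly by the same scheme.
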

%\komG{This applies in particular to harmonic functions on the spherical domain. Is the result trivial in that case? Was it observed by Wang-Wang?}\komT{Wang-Wang are not discussing results for $k$, only for $h$, but we know that \eqref{pde2} holds for $h$ and so I extended the statement. See also the added comment.}

Let us note that this corollary generalizes the second assertion in~\cite[Corollary 1.1]{ww} to the setting of non-constant Gaussian curvature and curvature $k$. In particular, our observation gives the minimum principle for $k$ for harmonic functions on sphere $\mathbb{S}^2$ not covered by~\cite{ww}.
\begin{proof}
 The Schwartz inequality implies that~\eqref{pde2} reads ($k>0$ by our assumptions):
 \[
  -\Delta \ln k +\frac{|\nabla K|}{k}\geq K.
 \]
We proceed by contradiction and suppose that $k>|\nabla K|/K$ in a neighbourhood of the minimum $y$. Set $v:=\frac{1}{k}>0$. Then $v \in C^{\infty}$ and upon substituting $v$ in the differential inequality above, one gets
 \begin{equation}
 0\leq \Delta v - \frac{|\nabla v|^2}{v}+|\nabla K|v^2 -Kv
 %\leq \Delta v +|\nabla K|v -Kv. \label{cor-ineq2}
\leq \Delta v \end{equation}
%First, let us consider the case of $k\geq 1$ (i.e. $\phi \leq 1$) and $|\nabla K|\leq K$ in $\Om$.
% Then~\eqref{cor-ineq2} reads $Lv \geq 0$ with $Lv:=\Delta v +(|\nabla K|-K)v$ and the coefficient of the $v$-term is nonpositive. 
 Thus by the Hopf strong maximum, see~\cite[Theorem 3.5]{gt}, we conclude  that $k$ is constant, contradicting assumptions of the corollary. Hence, we get the assertion of the corollary for $k$. The reasoning for $h$ follows the same lines.
 %Next, let $k<1$ ($\phi>1$) and $K\geq 0$ in $\Om$. Then, \eqref{cor-ineq2} reads $L\phi \geq g$, where $L\phi=\Delta \phi -K\phi$ and $g=-|\nabla K|$ and Theorem 3.7 in \cite{gt} gives the second assertion of the corollary, as $g^{-}=g$.
 % The nonhomogeneous term $g:=K\phi-|\nabla K|\phi^2\in L^{q}(\Om)$, for an appropriate $q>1$, since
% \begin{align*}
%\|g\|_{L^{q}(\Om)}&:=\|K\|_{L^{\infty}(\Om)}\| k^{-1}\|_{L^q(\Om)}+\|\nabla K\|_{L^{\infty}(\Om)}\|k^{-1}\|_{L^q(\Om)}^2. \\
%&\leq 2(\|K\|_{L^{\infty}(\Om)}+\|\nabla K\|_{L^{\infty}(\Om)}) \left\|\frac{1}{|\nabla u|^{2q}}\right\|_{L^2(\Om)}\,\||\nabla ^2u|^{2q}\|_{L^2(\Om)}.
%\end{align*}
% By applying Theorem 8.16 in~\cite{gt}, we obtain the assertion of the corollary.
 \end{proof}
The maximum principle allows us to infer also the following upper bound for the growth of the function $L$.
\begin{cor}\label{c:cor3.8-2ndversion}
	Let $u$ be a harmonic solution to the Dirichlet problem~\eqref{DP} in a topological $C^{1,\alpha}$-annulus domain $\Om\subset M^2$ with constant boundary data $t_1$ and $t_2$, respectively.

If $K\leq 0$ and $\langle\nabla K,\nabla u\rangle\leq 0$ in $\Omega$, 
then for every level curve $\{u=t\}$ contained it the interior of $\Om$ it holds
\[
\left(\ln L(t) \right)' \leq  \max\{-\inf_{\partial \Om} (|\nabla u|^{-1}k);0\}.
\]
Similarly, if $K\geq 0$, $\langle\nabla K,\nabla u\rangle\leq 0$ and $k\ge 0$ in $\Omega$, 
then for every level curve $\{u=t\}$ contained it the interior of $\Om$ it holds
\[
\left(\ln L(t) \right)' \leq  -\inf_{\partial \Om'} (|\nabla u|^{-1}k).
\]
\end{cor}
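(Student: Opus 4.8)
The plan is to combine the integral formula for $L'(t)$ from Lemma~\ref{lem: L''} with a maximum principle for the quantity $|\nabla u|^{-1}k$ coming from Theorem~\ref{thm-pdes} (specifically Corollary~\ref{cor-up-lo-bound_2ndversion}). First I would recall from the first equation in~\eqref{L-der1}, rewritten using the definition of $k$ in~\eqref{eq-k-h}, that
\[
L'(t)=\int_{\{u=t\}} \div\!\left(\frac{\nabla u}{|\nabla u|}\right)\frac{{\rm d}\mathcal{H}^1}{|\nabla u|} = -\int_{\{u=t\}}\frac{k}{|\nabla u|^2}\,{\rm d}\mathcal{H}^1.
\]
Dividing by $L(t)=\int_{\{u=t\}}{\rm d}\mathcal{H}^1/|\nabla u|\cdot|\nabla u| $ — more precisely, writing $L(t)=\int_{\{u=t\}}{\rm d}\mathcal{H}^1$ and comparing the two integrals by means of the pointwise bound $|\nabla u|^{-1}k \le \sup |\nabla u|^{-1}k$ on $\{u=t\}$ — one gets $L'(t)\le -\bigl(\inf_{\{u=t\}}|\nabla u|^{-1}k\bigr)\cdot\bigl(\int_{\{u=t\}}|\nabla u|^{-1}\,{\rm d}\mathcal{H}^1\bigr)$; I would then need to relate $\int_{\{u=t\}}|\nabla u|^{-1}\,{\rm d}\mathcal{H}^1$ to $L(t)$, which is not automatic. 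The cleaner route, which I would adopt, is to estimate directly: $(\ln L(t))' = L'(t)/L(t) = -\bigl(\int_{\{u=t\}} k\,|\nabla u|^{-2}\,{\rm d}\mathcal{H}^1\bigr)/\bigl(\int_{\{u=t\}}{\rm d}\mathcal{H}^1\bigr)$ and bound the numerator by $\sup_{\{u=t\}}(|\nabla u|^{-1}k)\cdot\int_{\{u=t\}}|\nabla u|^{-1}\,{\rm d}\mathcal{H}^1$; but since the denominators differ I would instead first observe the cruder but sufficient inequality $(\ln L(t))' \le -\inf_{\{u=t\}}\bigl(|\nabla u|^{-1}k\bigr)$ when that infimum is taken over a set where the weight $|\nabla u|^{-1}$ against which we integrate is the same on both sides — i.e. I would write $L'(t) = -\int_{\{u=t\}} (|\nabla u|^{-1}k)\,\tfrac{{\rm d}\mathcal{H}^1}{|\nabla u|}$ and $L(t) = \int_{\{u=t\}}\tfrac{{\rm d}\mathcal{H}^1}{|\nabla u|}\cdot\bigl(\text{something}\bigr)$. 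The correct and clean identity to exploit is simply that for any function $f$ on the level set, $\bigl(\int f\,|\nabla u|^{-1}\bigr)/\bigl(\int |\nabla u|^{-1}\bigr)$ is a weighted average of $f$, hence lies between $\inf f$ and $\sup f$; and one checks $L(t)=\int_{\{u=t\}}{\rm d}\mathcal{H}^1$ is not this weighted-average denominator, so the honest statement is $(\ln L(t))' = L'(t)/L(t)$ with $L'(t) = -\int k\,|\nabla u|^{-2}$; here I would bound $k\,|\nabla u|^{-2} \ge \bigl(\inf(|\nabla u|^{-1}k)\bigr)|\nabla u|^{-1}$ pointwise and hence $L'(t)\le -\bigl(\inf_{\{u=t\}}(|\nabla u|^{-1}k)\bigr)\int_{\{u=t\}}|\nabla u|^{-1}$, then use the coarea-type inequality $\int_{\{u=t\}}|\nabla u|^{-1}\,{\rm d}\mathcal{H}^1 \ge c\, L(t)$ only if available — which it is not in general, so the genuinely clean step is the one I describe next.

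The step that actually makes the corollary work is the following reorganization: by~\eqref{L-der1} we have $L'(t)=-\int_{\{u=t\}} k\,|\nabla u|^{-2}\,{\rm d}\mathcal{H}^1$, and I claim $L'(t)\le -\bigl(\inf_{\{u=t\}}|\nabla u|^{-1}k\bigr)\, \widetilde L(t)$ where $\widetilde L(t):=\int_{\{u=t\}}|\nabla u|^{-1}\,{\rm d}\mathcal{H}^1$; then $(\ln L(t))'\le -\bigl(\inf_{\{u=t\}}|\nabla u|^{-1}k\bigr)\widetilde L(t)/L(t)$. For the first displayed bound in the statement ($K\le0$, $\langle\nabla K,\nabla u\rangle\le 0$), Corollary~\ref{cor-up-lo-bound_2ndversion}(2) with the relevant sign hypotheses gives that if $\inf_\Omega(|\nabla u|^{-1}k)$ is non-positive it is attained on $\partial\Omega$, so $\inf_{\{u=t\}}(|\nabla u|^{-1}k)\ge \min\{\inf_{\partial\Omega}(|\nabla u|^{-1}k),0\}$; feeding this in and using $\widetilde L(t)/L(t)\le \sup_{\{u=t\}}|\nabla u|^{-1}$ would produce an extra gradient factor, which is not in the claimed inequality. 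Hence I conclude that the intended argument must instead bound $(\ln L)'$ directly against $\sup_{\{u=t\}}(-|\nabla u|^{-1}k)$ using the representation $(\ln L(t))' = -\int_{\{u=t\}}(|\nabla u|^{-1}k)\,d\nu_t$ where $d\nu_t := |\nabla u|^{-1}{\rm d}\mathcal{H}^1 / \widetilde L(t)$ is a \emph{probability} measure on $\{u=t\}$ — but this requires $L(t)=\widetilde L(t)$, which holds only when $|\nabla u|\equiv 1$ on level sets. The resolution, and the point I would make explicit, is that $(\ln L(t))'$ is what one should compute via the \emph{coarea} normalization: one has $\tfrac{d}{dt}\int_{\{u<t\}}1 = \widetilde L(t)$ and $L(t)$ itself satisfies $L'(t)=-\int k|\nabla u|^{-2}$; combining, $(\ln L(t))' \le \sup_{\{u=t\}}\bigl(-|\nabla u|^{-1}k\bigr)\cdot\bigl(\widetilde L(t)/L(t)\bigr)$ and then one must argue $\widetilde L(t)\le L(t)$, i.e.\ $|\nabla u|\ge 1$, which one can arrange by the normalization that $t_1,t_2$ and the conformal scale are chosen as in the proof of Theorem~\ref{thm-main1} — but absent that, the clean inequality is obtained by instead applying the maximum principle to $|\nabla u|^{-1}k$ and noting $-|\nabla u|^{-1}k\le \max\{-\inf_{\partial\Omega}(|\nabla u|^{-1}k),0\}$ \emph{pointwise} on $\overline\Omega$, whence $-\int_{\{u=t\}}k|\nabla u|^{-2} \le \max\{-\inf_{\partial\Omega}(|\nabla u|^{-1}k),0\}\int_{\{u=t\}}|\nabla u|^{-1}$ and one is again left needing $\widetilde L(t)\le L(t)$.

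Given the above, the \textbf{honest plan} is: (i) write $(\ln L(t))' = L'(t)/L(t)$ and use~\eqref{L-der1} together with $\div(\nabla u/|\nabla u|)=-k$ to get $L'(t) = -\int_{\{u=t\}}(k/|\nabla u|)\,(\,{\rm d}\mathcal{H}^1/|\nabla u|)$; (ii) observe that $L(t)=\int_{\{u=t\}}{\rm d}\mathcal{H}^1 \ge \int_{\{u=t\}}(\,{\rm d}\mathcal{H}^1/|\nabla u|)$ precisely when $|\nabla u|\le 1$, and that after the harmless normalizations used throughout Section~2 (vertical shift and multiplicative rescaling of $u$, together with possibly replacing $u$ by $-u$) one may assume $|\nabla u|\le 1$ on $\overline\Omega$ — OR, better, bypass this by directly bounding $L'(t)/L(t)$ using that $k/|\nabla u| \ge \bigl(\inf_{\{u=t\}}|\nabla u|^{-1}k\bigr)$ and that $\int_{\{u=t\}}|\nabla u|^{-1}\,{\rm d}\mathcal{H}^1 \le \int_{\{u=t\}}{\rm d}\mathcal{H}^1 / \inf_{\{u=t\}}|\nabla u|$ is \emph{not} what we want either; (iii) apply Corollary~\ref{cor-up-lo-bound_2ndversion}(2) (first case: $K\le0$, $\langle\nabla K,\nabla u\rangle\le0$) to get $\inf_\Omega(|\nabla u|^{-1}k)\ge \min\{\inf_{\partial\Omega}(|\nabla u|^{-1}k),0\}$, hence $-|\nabla u|^{-1}k\le \max\{-\inf_{\partial\Omega}(|\nabla u|^{-1}k),0\}$ everywhere on $\overline\Omega$; (iv) conclude $L'(t) = \int_{\{u=t\}}(-|\nabla u|^{-1}k)|\nabla u|^{-1}\,{\rm d}\mathcal{H}^1 \le \max\{-\inf_{\partial\Omega}(|\nabla u|^{-1}k),0\}\cdot\widetilde L(t) \le \max\{-\inf_{\partial\Omega}(|\nabla u|^{-1}k),0\}\cdot L(t)$, using $\widetilde L(t)\le L(t)$, to get $(\ln L(t))'\le \max\{-\inf_{\partial\Omega}(|\nabla u|^{-1}k),0\}$; (v) for the second displayed bound ($K\ge0$, $\langle\nabla K,\nabla u\rangle\le0$, $k\ge0$) repeat with Corollary~\ref{cor-up-lo-bound_2ndversion}(4), which gives that a non-negative infimum of $|\nabla u|^{-1}k$ is attained on $\partial\Omega'$, hence $|\nabla u|^{-1}k \ge \inf_{\partial\Omega'}(|\nabla u|^{-1}k)$ on $\overline{\Omega'}$, and then $L'(t)\le -\inf_{\partial\Omega'}(|\nabla u|^{-1}k)\,\widetilde L(t)\le -\inf_{\partial\Omega'}(|\nabla u|^{-1}k)\,L(t)$, giving the second estimate. \emph{The main obstacle} is step (ii)/(iv): justifying the passage from the weighted length $\widetilde L(t)=\int_{\{u=t\}}|\nabla u|^{-1}\,{\rm d}\mathcal{H}^1$ to the genuine length $L(t)$, i.e.\ establishing $\widetilde L(t)\le L(t)$; this is where the normalization $|\nabla u|\le 1$ (achievable by the scaling freedom in $u$, exactly as in the proof of Theorem~\ref{thm-main1} where $|\nabla u|\ge\text{const}>0$ is used, and here one rescales so the constant is $1$) enters, and I expect the referee-facing subtlety to be spelled out there. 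All other steps are immediate from the coarea formula and the already-proven maximum principles.
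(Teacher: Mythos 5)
Your overall strategy --- combining the representation \eqref{L-der1} of $L'(t)$ with the minimum principle for $|\nabla u|^{-1}k$ from Corollary \ref{cor-up-lo-bound_2ndversion}(2) (resp.\ (4)) --- is exactly the paper's. But the proposal is derailed by a computational slip at the very first step: from \eqref{L-der1} and the definition $k=-\div\big(\nabla u/|\nabla u|\big)$ in \eqref{eq-k-h} one gets
\[
L'(t)=\int_{\{u=t\}}\div\Big(\frac{\nabla u}{|\nabla u|}\Big)\frac{d\mathcal H^1}{|\nabla u|}
=-\int_{\{u=t\}}\frac{k}{|\nabla u|}\,d\mathcal H^1,
\]
not $-\int_{\{u=t\}}k\,|\nabla u|^{-2}\,d\mathcal H^1$ as you write (you have divided by $|\nabla u|$ twice). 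With the correct formula the integrand is precisely the quantity $|\nabla u|^{-1}k$ that the maximum principle controls, integrated against plain $d\mathcal H^1$, whose total mass is $L(t)$. Hence immediately
\[
L'(t)\le \sup_{\{u=t\}}\bigl(-|\nabla u|^{-1}k\bigr)\,L(t)\le -\inf_{\overline{\Om}}\bigl(|\nabla u|^{-1}k\bigr)\,L(t),
\]
and Corollary \ref{cor-up-lo-bound_2ndversion}(2) (resp.\ (4)) identifies $\inf_{\overline{\Om}}(|\nabla u|^{-1}k)$ with the boundary infimum in the relevant sign regime; the case where this minimum is positive simply gives $L'(t)<0$ and is absorbed by the $\max\{\,\cdot\,;0\}$. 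There is no weighted length $\widetilde L(t)=\int_{\{u=t\}}|\nabla u|^{-1}\,d\mathcal H^1$ to compare with $L(t)$ at any point.

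Because of the spurious extra factor, your ``honest plan'' has to bridge $\widetilde L(t)$ and $L(t)$, and the bridge you propose does not hold: $\widetilde L(t)\le L(t)$ requires $|\nabla u|\ge 1$ on the level set, whereas you propose to normalize so that $|\nabla u|\le 1$, which yields the opposite inequality $\widetilde L(t)\ge L(t)$. (A normalization $|\nabla u|\ge 1$ would in fact be achievable, since $|\nabla u|\ge const>0$ on $\overline{\Om}$ by the argument in Theorem \ref{thm-main1} and the asserted inequality is invariant under $u\mapsto cu$; but none of this is needed once $L'$ is computed correctly.) Fix the first display and your steps (iii)--(v) go through essentially verbatim and coincide with the paper's proof.
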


%\komG{This applies in particular to harmonic functions on the euclidean or hyperbolic annular domain. Is the result trivial / already known in that case? }\komT{I did not find such result in Longinetti or Alessandrini, but I also realized that since we study $|\nabla u|$ on $\partial 
%\Om'$, then $\nabla u$ should be defined there and uniformly bounded from below. Therefore, I modified the statement accordingly.}
\begin{proof}
Suppose first that the minimum of $|\nabla u|^{-1}k$ is non-positive. Then, by assertion (2) of Corollary~\eqref{cor-up-lo-bound_2ndversion} we have that $\inf_{\Om} |\nabla u|^{-1}k=\inf_{\partial \Om} |\nabla u|^{-1}k\leq 0$. Hence, by formula~\eqref{L-der1} for $L'$, we have
 \[
   L'(t)=\int_{\{x\in \Om\,:\,u(x)=t\}} \div\left(\frac{\nabla u}{|\nabla u|}\right)\frac{{\rm d}\mathcal{H}^1}{|\nabla u|}=
   \int_{\{x\in \Om\,:\,u(x)=t\}} \left(-\frac{k}{|\nabla u|}\right) {\rm d}\mathcal{H}^1\leq -\inf_{\partial \Om} (|\nabla u|^{-1}k) L(t).
 \]
If the minimum of $|\nabla u|^{-1}k$ is positive, then $\sup(-|\nabla u|^{-1}k)<0$, and so in the above estimate we get that $L'(t)<0$ for all $t_1<t<t_2$.

In order to prove the second assertion we apply part (4) of Corollary~\eqref{cor-up-lo-bound_2ndversion} and the similar reasoning as above.
\end{proof}

We finish this section by applying to our setting a maximum principle due to Mugnai-Pucci~\cite{mupu}. With respect to our previous results the conclusion is somehow weaker. However, here no sign assumptions on $K$ or $\nabla K$ are required.

\begin{cor}\label{cor-up-lo-bound2} 
 Let $u$ be harmonic in a domain $\Om\Subset M^2$ such that $\overline{\Om}$ does not contain critical points of $u$.
 %%Let $\Om$ be covered by a single connected chart. If $K\leq 0$ in $\Om$ and $|\nabla u|$ is bounded from below in $\Om$, then the following weak maximum principle holds for $|\nabla u|^{-1} k$:
%%\begin{equation}
%% \sup_{\Om} (|\nabla u|^{-1} k)\leq \sup_{\partial \Om} (|\nabla u|^{-1} k)+c_1\,c_2, \label{weak-max-k}
%%\end{equation}
%%\komG{If I'm not wrong the constant $c_1$ depends also on $\mu_1$, hence on $\|g\|_{L^\infty}$ and $\|\nabla K\|_{L^\infty}$. This is a little bit annoying, since $\|g\|_{L^\infty}$ is not an isometric invariant as it depends on the chart.}\komT{True, thanks. Corrected.}\komG{okay, however I remain perplexed by this first part of teh corollary. In addition to the annoying dependence on the metric $g$, there is the fact that the constant $c_2$ depends on the norm $\|\phi\|_{L^{\infty}(\Omega)}$, which precisely the quantity we want to estimate. Although the estimate may be non-trivial (its (non-)triviality depends on the precise expression for $c_1$), the result seems too convoluted to me appear appealing. Given the choice, I would leave only the second part of the Corollary, i.e. the coordinate free result.}
%%where \kom{the constant $c_1$ depends} on $\diam\, \Om, \|g\|_{L^\infty}, \|\nabla K\|_{L^\infty}$ while the constant $c_2$ depends on $\|K\|_{L^{\infty}(\Om)}$, $\|\nabla K\|_{L^{\infty}(\Om)}$, $\inf_{\Om} |\nabla u|$, and $\| \nabla^2 u\|_{L^{\infty}(\Om)}$. 
%%
%Let $\Om$ be a domain in $M^2$ (not necessarily covered by one map). 
If $\|K\|_{L^{\infty}(\Om)}, \|\nabla K\|_{L^{\infty}(\Om)}<\infty$ and $|\nabla u|$ is bounded from below in $\Om$, then the following weak maximum principle holds for $|\nabla u|^{-1} k$:
\begin{equation}
\sup_{\Om} (|\nabla u|^{-1}k) \leq \left(1+c_1(|\Om|)\|K\|_{L^{\infty}(\Om)}\right) \sup_{\partial \Om} (|\nabla u|^{-1}k)+c_1(|\Om|)c_2,  \label{weak-max-k2}
\end{equation}
where $c_2=\|(|\nabla u|^{-1} k)^{+}\|_{L^2(\Om)}+\||\nabla K|\|_{L^{\infty}(\Om)}\|\frac{1}{|\nabla u|}\|_{L^{\infty}(\Om)}+\|K\|_{L^{\infty}(\Om)}^{\frac12}$.

Moreover, the corresponding weak maximum principles hold also for $|\nabla u|^{-1} h$ with estimate~\eqref{weak-max-k2} modified accordingly, i.e.
\[
\inf_{\Om} (|\nabla u|^{-1}k) \geq \left(1+c_1(|\Om|)\|K\|_{L^{\infty}(\Om)}\right) \inf_{\partial \Om} (|\nabla u|^{-1}k)-c_1(|\Om|)c_2.
\]

Similarly, the analogous weak minimum principles corresponding to estimate~\eqref{weak-max-k2} hold for $|\nabla u|^{-1} k$ and $|\nabla u|^{-1} h$ as well.
\end{cor}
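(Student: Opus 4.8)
The plan is to read equations~\eqref{pde1} and~\eqref{pde1-star} as linear elliptic equations with a bounded, possibly sign-changing, zeroth order coefficient, and then to invoke the quantitative weak maximum principle of Mugnai--Pucci~\cite{mupu}, whose structural constant depends on the domain only through its measure. Set $\phi=|\nabla u|^{-1}k$; by Theorem~\ref{thm-pdes}, $\phi$ solves $\Delta\phi+2K\phi=f$ with $f=\langle\nabla K,\nabla u\rangle/|\nabla u|^2$, and likewise $\psi=|\nabla u|^{-1}h$ solves the same equation with $f$ replaced by $-\langle\nabla K,\star\nabla u\rangle/|\nabla u|^2$. Since $u$ is harmonic, hence smooth, and has no critical points on the compact set $\overline{\Om}$, both $\phi$ and $\psi$ are smooth up to $\bdy\Om$; in particular they lie in $W^{1,2}(\Om)\cap C(\overline{\Om})$ and their positive parts lie in $L^2(\Om)$. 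The coefficient $c:=2K$ obeys $\|c\|_{L^\infty(\Om)}\le 2\|K\|_{L^\infty(\Om)}$, and since $|\star\nabla u|=|\nabla u|$, Cauchy--Schwarz gives $\|f\|_{L^\infty(\Om)}\le\||\nabla K|\|_{L^\infty(\Om)}\,\|\,|\nabla u|^{-1}\|_{L^\infty(\Om)}$ and the same bound for the right-hand side of~\eqref{pde1-star}.

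Next I would apply the weak maximum principle of~\cite{mupu} to the subsolution $\phi$ of $\Delta\phi+2K\phi\ge f$ (here in fact an equality). That result provides an $L^\infty$ bound of the shape $\sup_{\Om}\phi\le(1+c_1(|\Om|)\|c^+\|_{L^\infty(\Om)})\sup_{\bdy\Om}\phi+c_1(|\Om|)\,(\text{lower order data})$, with $c_1(|\Om|)$ depending only on $|\Om|$ (and on $n=2$). Inserting the bounds of the previous paragraph, and noting that when $K\equiv 0$ the estimate collapses to the classical weak maximum principle, one reads off precisely~\eqref{weak-max-k2}; the summands $\|(|\nabla u|^{-1}k)^+\|_{L^2(\Om)}$ and $\|K\|_{L^\infty(\Om)}^{1/2}$ appearing in $c_2$ are the concrete lower order contributions of the Mugnai--Pucci estimate. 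Running the same argument with $\phi$ replaced by $\psi$ yields the corresponding statement for $|\nabla u|^{-1}h$; here $\||\nabla K|\|_{L^\infty(\Om)}$ dominates both $|\langle\nabla K,\nabla u\rangle|/|\nabla u|$ and $|\langle\nabla K,\star\nabla u\rangle|/|\nabla u|$, so $c_2$ is left unchanged.

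For the weak minimum principles I would apply the same theorem to $-\phi$ and $-\psi$, which satisfy $\Delta(-\phi)+2K(-\phi)=-f$ — the same structure, with identical $L^\infty$ norms of coefficient and right-hand side — and then use $\sup_{\bdy\Om}(-\phi)=-\inf_{\bdy\Om}\phi$ together with $\|(-\phi)^+\|_{L^2(\Om)}=\|\phi^-\|_{L^2(\Om)}\le\|\phi\|_{L^2(\Om)}$, the latter absorbed into $c_2$. This gives the displayed lower bound for $\inf_{\Om}(|\nabla u|^{-1}k)$ and, mutatis mutandis, for $|\nabla u|^{-1}h$, with $c_2$ modified accordingly.

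The main obstacle is bookkeeping rather than conceptual: one must pin down the exact hypotheses and the exact form of the estimate in~\cite{mupu}, confirm that its structural constant depends on the domain only through $|\Om|$ so that the dependence on $K$ is confined to the explicit factor $1+c_1(|\Om|)\|K\|_{L^\infty(\Om)}$, and keep careful track of which norm of the solution enters the right-hand side. A minor verification is that $\phi$ and $\psi$ meet the regularity demanded by that maximum principle up to the boundary, which is immediate here since $u$ is smooth and $|\nabla u|$ is bounded away from zero on the compact set $\overline{\Om}$.
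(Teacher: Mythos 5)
Your proposal matches the paper's own argument: both recast \eqref{pde1} and \eqref{pde1-star} as elliptic inequalities with bounded coefficient $2K$ and bounded right-hand side controlled by $\||\nabla K|\|_{L^\infty}\|\,|\nabla u|^{-1}\|_{L^\infty}$, invoke the weak maximum principle of Mugnai--Pucci \cite[Theorem 3.1]{mupu} with a constant depending only on $|\Om|$, and derive the minimum principles by applying the same estimate to $-\phi$. The only difference is that you flag the $\|(-\phi)^+\|_{L^2}=\|\phi^-\|_{L^2}$ bookkeeping explicitly, which the paper glosses over; the substance is identical.
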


In the planar setting, estimate \eqref{weak-max-k2} for $|\nabla u|^{-1}k$, as well as the corresponding one for $|\nabla u|^{-1}h$, hold in a stronger version without the constants due to harmonicity of these expressions, cf.~\cite[Theorem 3(i)]{tal} .

%\begin{rem}
%In fact, since $u$ is assumed not to have critical points in $\overline{\Om}$, we have that $\sup_{\partial \Om} (|\nabla u|^{-1} k)^{+}=\sup_{\partial \Om} (|\nabla u|^{-1} k)$, since $k$ can be continuously extended to $\partial \Om$ as a formula (perhaps not being interpreted as  the curvature of a level set anymore).
%\end{rem}
\begin{proof}
 Let us discuss the case of $\phi:=|\nabla u|^{-1} k$, keeping in mind that the discussion for $\phi:=|\nabla u|^{-1} h$ is analogous.
 By \cite[Theorem 3.1]{mupu}, and following the notation in~\cite{mupu}, we have that ${\bf A}(x, \phi,\nabla \phi)=A(\nabla \phi):=\nabla \phi$ and by smoothness of $\phi$ it holds that $A(\nabla \phi)\in L_{loc}^p(\Om, T\Om)$ for any $p>1$, hence in particular $\phi$ is $2$-regular. Moreover, 
 \[
 \langle A(\zeta), \zeta \rangle = |\zeta|^2 \quad \hbox{ and }\quad 
 B(x,z,\zeta):=2K(x)z+\frac{|\nabla K(x)|}{|\nabla u(x)|}\leq b_2|z|+b,
 \]
 with $b_2, b$ defined accordingly in terms of the $L^{\infty}$-norms of $K, \nabla K$ and $|\nabla u|^{-1}$. Then, the assertion of \cite[Theorem 3.1]{mupu} says that
\begin{align*}
% \sup_{\Om} u \leq \sup_{\partial \Om} u+C(|\Om|)\left(\|u^{+}\|_{L^2(\Om)}+\||\nabla K|\|_{L^{\infty}(\Om)}\|\frac{1}{|\nabla u|}\|_{L^{\infty}(\Om)}+\|K\|_{L^{\infty}(\Om)}^{\frac12}+\|K\|_{L^{\infty}(\Om)}\sup_{\partial \Om}u \right). 
\sup_{\Om} |\nabla u|^{-1}k &\leq \left(1+C(|\Om|)\|K\|_{L^{\infty}(\Om)}\right) \sup_{\partial \Om} |\nabla u|^{-1}k \\
&\,\,+C(|\Om|)\left(\|(|\nabla u|^{-1} k)^{+}\|_{L^2(\Om)}+\||\nabla K|\|_{L^{\infty}(\Om)}\|\frac{1}{|\nabla u|}\|_{L^{\infty}(\Om)}+\|K\|_{L^{\infty}(\Om)}^{\frac12}\right). 
\end{align*}
%\komG{I do not see why there is a factor $\left(1+C(|\Om|)\|K\|_{L^{\infty}(\Om)}\right)$ in front of $\sup_{\partial \Om} |\nabla u|^{-1}k$. It seems to me that it is not there in Mugnai-Pucci Theorem 3.1.}\\
%\komT{In the assertion of \cite[Thm 3.1]{mupu} the term $M:=\sup_{\partial \Om}|\nabla u|^{-1}k$ appears both in the inequality for a function $u$ and in constant $\mathcal{K}$. I simplified the expression and regrouped all expression with $M$ in one expression.}
For the weak minimum principle for $\phi=|\nabla u|^{-1}k$ observe that $-\phi$ satisfies equation~\eqref{pde1} with the negative right-hand side. We apply the maximum principle to $-\phi$ obtaining \eqref{weak-max-k2} with constant $-c_1(|\Om|) c_2$.

%\komT{Following the last paragraph of the proof we have that $-\phi$ satisfies the equation: $\Delta \phi+2K\phi=-\frac{\langle \nabla K, \nabla u \rangle}{|\nabla u|^2}$ and so the same $B$ works here as well. Then~\eqref{weak-max-k2} reads
%$\sup(-..)\leq (1+..)\sup(-..)+c_1..$ and hence
%\[
%\inf_{\Om} (|\nabla u|^{-1}k) \geq \left(1+c_1(|\Om|)\|K\|_{L^{\infty}(\Om)}\right) \inf_{\partial \Om} (|\nabla u|^{-1}k)-c_1(|\Om|)c_2.
%\]
%}
\end{proof}

 \section*{Appendix A: an analytic proof of Theorem \ref{thm-main1}}

By a version of the uniformization theorem (see Lemma~\ref{unif-annulus}), we have that the annular domain $\Omega$ is isometric to the annulus $\{1< |z|< R\}\subset \mathbb C = \R^2$ endowed with the conformally flat metric $g=e^{2\varphi(z)}dzd\bar z = e^{2\varphi(x,y)}(dx^2 + dy^2)$. Here $R>1$ and $\phi$ is some smooth real function defined on $\{1\leq |z|\leq R\}$.
By the conformal invariance of harmonicity in dimension $2$, in this isometrically equivalent representation $u$ is a solution to 
\[
\begin{cases}
\Delta_0 u=0,\\
u|_{\{|z|=1\}}=t_1,\\
u|_{\{|z|=R\}}=t_2.
\end{cases}
\]
Up to a horizontal translation we can assume that $t_1=0$ and up to a multiplicative constant we can assume that $t_2 = -\ln R$, cf. the corresponding discussion in the proof of Theorem~\ref{thm-main2}). Also, note that both these transformations do not affect the conclusion of the theorem. Accordingly, we can suppose without the loss of generality that $u(z)=-\ln |z|$.

Hence, we may compute $L(t)$ and its derivatives in terms of the conformal factor $\vp$ only. Namely, let $d\si$ denote the $1$-dimensional Hausdorff measure and $d\lambda$ the Lebesgue measure in $\R^2$. Moreover, set $r(z)=|z|$ and $\partial_rf=\langle \nabla_0f, \nabla_0r \rangle$, where $\nabla_0$ and $\langle\cdot,\cdot\rangle$ are the Euclidean gradient and the Euclidean inner product, respectively. Then 
\[
L(t)= \int_{\{|z|=e^{-t}\}}e^{\vp} d\si = e^{-t}\int_{\{|z|=1\}}e^{\varphi(e^{-t}\theta)}\, d\si(\theta).
\]
As observed in Remark \ref{rmk:AG}, at this point Theorem \ref{thm-main1} could be deduced from an abstract result in subharmonic functions theory, \cite[Theorem 3.5.7 ii]{ArGa}. However, since $\varphi$ is smooth, we present here a simple self-contained proof. 
First, we compute
\[\begin{aligned}
L'(t)&=-e^{-t}\int_{\{|z|=1\}}e^{\varphi(e^{-t}\theta)}\, d\si(\theta)-e^{-2t}\int_{\{|z|=1\}}e^{\varphi(e^{-t}\theta)}\partial_r\vp(e^{-t}\theta)\, d\si(\theta)\\
&=-\int_{\{|z|=e^{-t}\}}e^{\varphi(\theta)}\, d\si(\theta)-\int_{\{|z|=e^{-t}\}}re^{\varphi(\theta)}\partial_r\vp(\theta)\, d\si(\theta)\\
&= \int_{\{|z|=e^{-t}\}} r^2e^{2\vp(\theta)}\Big\langle\nabla_0r, \nabla_0\left(\frac{e^{-\vp(\theta)}}{r}\right)\Big \rangle\,d\si(\theta).
\end{aligned}
\]
Moreover,
\[
\begin{aligned}
L''(t)&=\lim_{\epsilon\to 0}\frac{1}{\epsilon}\left[\int_{\{|z|=e^{-t-\epsilon}\}} r^2e^{2\vp}\left \langle \nabla_0r, \nabla_0\left(r^{-1} e^{-\vp}\right)\right \rangle\,d\si - \int_{\{|z|=e^{-t}\}} r^2e^{2\vp}\left \langle\nabla_0r, \nabla_0\left(r^{-1} e^{-\vp}\right)\right \rangle\,d\si\right]\\
&=\lim_{\epsilon\to 0}\frac{1}{\epsilon}\int_{\{e^{-t-\epsilon}\leq|z|\leq e^{-t}\}}-\div_0\left(r^2e^{2\vp}\nabla_0\left(r^{-1} e^{-\vp}\right)\right)\,d\lambda\\
&=\lim_{\epsilon\to 0}\frac{1}{\epsilon}\int_{\{e^{-t-\epsilon}\leq|z|\leq e^{-t}\}} e^{\vp}\left(\frac{1}{r} + 2\partial_r\vp+r|\nabla_0\vp|^2 + r \Delta_0\vp\right)\,d\lambda\\
&=\lim_{\epsilon\to 0}\frac{1}{\epsilon}\int_{\{e^{-t-\epsilon}\leq|z|\leq e^{-t}\}} e^{3\vp}\left( r^3 \left|\nabla_0\left(r^{-1} e^{-\vp}\right)\right|^2-rK\right)\,d\lambda\\
&=\int_{\{|z|=e^{-t}\}}e^{3\vp}\left( r^4 \left|\nabla_0\left(r^{-1} e^{-\vp}\right)\right|^2-r^2K\right)\,d\sigma
\end{aligned}
\]
with $\div_0$ standing for the Euclidean divergence.
Combining the formulas above and using the Cauchy-Schwartz inequality we obtain
\[
\begin{aligned}
(L'(t))^2&\leq \left(\int_{\{|z|=e^{-t}\}} e^{\vp}\,d\si\right)\left( \int_{\{|z|=e^{-t}\}} r^4e^{3\vp}\left| \nabla_0\left(r^{-1} e^{-\vp}\right)\right|^2\,d\si\right)\\
&= L(t)\left(L''(t)+ \int_{\{|z|=e^{-t}\}}e^{3\vp}r^2K\,d\sigma \right),
\end{aligned}
\]
which concludes the proof, since $K\leq 0$.

 \section*{Appendix B: Proof of Theorem~\ref{thm-pdes}}
 %\begin{proof}
 For the proof of equation~\eqref{pde1} we closely follow the discussion in~\cite{ww}. Therefore, since the original proof is based on direct computations for $h$ and $K\equiv const$, we will focus on emphasizing differences and novelties in our computations for $k$ (and $h$) and non-constant $K$. Moreover, our approach is slightly more direct and the proof of inequality~\eqref{pde2} is shorter than the corresponding one in the proof of~\cite[Theorem 1.3]{ww}.

Since our aim is to show that equation~\eqref{pde1} holds pointwise, we may localize the discussion and introduce an orthonormal frame in a neighbourhood $U$ of $x\in \Om$. Covariant derivatives are thus computed with respect to this frame. Without the loss of generality we can suppose that
\begin{equation}\label{norm-coord1}
|\nabla u|(x)=u_2(x)>0,\quad u_1(x)=0.
\end{equation}
Moreover, $\det g(x)=1$ and the matrix of metric $g$ at $x$ is the identity. 
The harmonicity of $u$ implies that $u_{22}=-u_{11}$ in $U$. Then, by~\eqref{eq-k-h} one computes that
\[
k=-\frac{u_{11}(u_2^2-u_{1}^2)-2u_1u_2u_{12}}{|\nabla u|^3}:=\frac{f}{|\nabla u|^3}.
\]
 In the chosen coordinate system, the following identities hold at $x$:
\begin{equation}\label{norm-coord2}
 k=-\frac{u_{11}}{u_2},\quad f=-u_2^2u_{11},\quad |\nabla u|_{i}=u_{2i},\quad \left(\frac{1}{|\nabla u|}\right)_{i}=-\frac{u_{2i}}{u_2^2},\quad |\nabla u|_{ii}=\frac{u_{1i}^2}{u_2}+u_{2ii}\quad\hbox{for }i=1,2.
\end{equation}
Let $\phi:=|\nabla u|^{-4}f$. Then 
\[
\Delta \phi=f\Delta(|\nabla u|^{-4})+2\langle \nabla(|\nabla u|^{-4}), \nabla f\rangle +|\nabla u|^{-4}\Delta f:=A+B+C.
\]
Denote the covariant derivatives of $\phi$ with respect to the chosen orthonormal frame as follows:
\[
 \phi_{i}:=(|\nabla u|^{-4})_{i}f+|\nabla u|^{-4}f_{i},\quad \hbox{ for }i=1,2.
\]
%\komG{Well, since we have no coordinates here, these should be covariant derivatives, and not partial ones. Namely $\phi_i=e_i(\phi)$ where $\{e_1,e_2\}$ is the choosen ON frame}
Upon direct computations of $\phi_1$ and $\phi_2$ we get the following formulas at $x$
\begin{equation}\label{u-id2}
 u_2u_{111}=-u_2^{3}\phi_1+4u_{11}u_{12},\quad u_2u_{112}=-u_2^{3}\phi_2+2u_{11}u_{22}+2u_{12}^2.
\end{equation}
Notice that by~\eqref{lem-rr1}, harmonicity of $u$ and by symmetries of the Riemann tensor we have the following identities:
\begin{equation}
 u_{111}=-u_{221},\quad u_{121}=u_{112}+u_2R_{2121}=-u_{222}+u_2R_{2121},\quad u_2u_{121}+u_2u_{222}=u_2^2R_{2121}=u_2^2K.  \label{u-id}
\end{equation}
Then, by using computations for $\beta=-2$ in \cite[(3.11)]{ww} we obtain that
\begin{align*}
A&=-u_{11}u_2^2 \Delta (|\nabla u|^{-4})=-16u_2^{-4}(u_{12}^2u_{11}+u_{11}^3)+4u_2^{-4}(u_2u_{121}+u_2u_{222})u_{11}\\
&=-16u_2^{-4}(u_{12}^2u_{11}+u_{11}^3)+4u_2^{-2}u_{11}K.
\end{align*}
Similarly to \cite[(3.14)]{ww}, we obtain that 
\begin{equation*}
B=2\big(f_1 (|\nabla u|^{-4})_1+f_2 (|\nabla u|^{-4})_2\big)=-8u_2^{-1}(u_{21}\phi_1-u_{11}\phi_2)+32u_2^{-4}(u_{11}u_{12}^2+u_{11}^3).
\end{equation*}
Finally, we compute
\begin{align*}
-\Delta f&=-(f_{11}+f_{22})\\
&=u_{1111}u_2^2+2u_2u_{12}u_{111}+u_{2211}u_1^2+2u_1u_{11}u_{221}-2u_{11}^3-4u_1u_{11}u_{111}-2u_{11}u_{12}^2 -6u_1u_{12}u_{121}\\
&-2u_{2}u_{11}u_{121}-2u_1u_2u_{1211}+u_{1122}u_2^2-6u_2u_{11}u_{112}+u_{2222}u_1^2+2u_{11}^3+2u_1u_{12}u_{222}+2u_{12}^2u_{11}\\
&+2u_1u_{11}u_{122}-6u_{2}u_{12}u_{122}-2u_1u_2u_{1222},
\end{align*}
which evaluated at $x$ results in the following formula ($u_1(x)=0$):
\begin{equation}\label{lapl-f}
\Delta f(x)=-u_2^2(u_{1111}+u_{1122})-2u_2u_{12}u_{111}+2u_2u_{11}u_{121}+6u_2u_{11}u_{112}+6u_2u_{12}u_{122}.
\end{equation}
By applying~\eqref{lem-rr2} we have:
\begin{equation}\label{u4-id}
 u_{1111}=-u_{2211}=-u_{1122}+4u_{11}R_{2121}-(\nabla_2\Ric)_{22}u_2.
\end{equation}
Thus, by appealing to~\eqref{lem-rr1}, \eqref{u-id}, \eqref{u-id2} and simplifying the arising expression, we get
\begin{align}
\Delta f(x)=&-u_2^2(4u_{11}K-(\nabla_2\Ric)_{22}u_2)-2u_2u_{12}u_{111}+2u_2u_{11}(u_{112}+u_2K) \nonumber \\
&+6u_2u_{11}u_{112}+6u_2u_{12}(-u_{111}) \nonumber \\
=&-u_2^2(2u_{11}K-(\nabla_2\Ric)_{22}u_2)-8u_2u_{12}u_{111}+8u_2u_{11}u_{112} \label{lapl-f-aux}\\
=&-u_2^2(2u_{11}K-(\nabla_2\Ric)_{22}u_2)+8u_2^{3}(u_{12}\phi_1-u_{11}\phi_2) -16u_{11}u_{12}^2-16u_{11}^3. \nonumber
\end{align}
%\komT{Corrections of sign at $B$ leads to a change of a sign in the last line above.}
By adding up $A,B$ and $C=u_2^{-4}\Delta f$ we arrive at assertion~\eqref{pde1}
\begin{equation}\label{phi-k-eq}
\Delta \phi=2u_2^{-2}u_{11}K+u_2^{-1}(\nabla_2\Ric)_{22}=-2\phi K+\frac{(\nabla_2\Ric)_{22}}{|\nabla u|}.
\end{equation}
In order to show equation~\eqref{pde1-star} we use the same orthonormal frame as in the reasoning for $k$. Then 
\begin{equation}\label{h-eq}
 h:=-\div\left(\frac{(u_2,-u_1)}{|\nabla u|}\right)=-\frac{u_{12}(u_1^2-u_2^2)-u_1u_2(u_{11}-u_{22})}{|\nabla u|^3},\quad h(x)=\frac{u_{12}}{u_2}.
 %:=\frac{\widetilde{f}}{|\nabla u|^3}.
 \end{equation}
The proof of~\eqref{pde1-star} follows from direct computations similar to the above reasoning for $|\nabla u|^{-1}k$ and, therefore, we will omit it. The counterpart of~\eqref{phi-k-eq} for $\phi=|\nabla u|^{-1}h$ reads
\begin{align*}
 \Delta \phi&=u_2^{-1}(4u_{12}K+u_2(\nabla_1\Ric)_{22})+u_2^{-3}(-6u_{21}u_2K)=4\frac{u_{12}}{u_2^2}K+\frac{(\nabla_1\Ric)_{22}}{u_2}-6\frac{u_{21}}{u_2^2}\\
 &=-2\phi K-\frac{\langle \nabla K, (u_2,-u_1)\rangle}{|\nabla u|^2}\quad\hbox{at }x.
\end{align*}

Let us now turn to the proof of inequality~\eqref{pde2}. 
We employ the Bochner formula~\eqref{formula-Bochner}, the Kato equality~\eqref{refined-Kato2} and argue as follows
	\begin{align*}
-\Delta\ln k &= -\Delta\ln\phi -\frac 12 \Delta \ln |\nabla u|^2 \\
&= -\frac{\Delta\phi}{\phi} +\frac{|\nabla\phi|^2}{\phi^2}-\frac 12  \frac{\Delta|\nabla u|^2}{|\nabla u|^2} + \frac 12 \frac{|\nabla|\nabla u|^2|^2}{|\nabla u|^4}\\
&=2K - \frac 1\phi \frac{\langle\nabla K,\nabla u\rangle}{|\nabla u|^2}+\frac{|\nabla\phi|^2}{\phi^2} - \frac{|\nabla^2u|^2}{|\nabla u|^2} - K\frac{|\nabla u|^2}{|\nabla u|^2}+ 2 \frac{|\nabla|\nabla u||^2}{|\nabla u|^2}\qquad (\hbox{by }\eqref{pde1} \hbox{ and }~\eqref{formula-Bochner})\\ 
&= K -  \frac{\langle\nabla K,\nabla u\rangle}{k|\nabla u|}+\frac{|\nabla\phi|^2}{\phi^2}\qquad (\hbox{by }\eqref{refined-Kato2}).
 \end{align*}
 This shows estimate~\eqref{pde2} for curvature $k$. Similar reasoning gives the inequality for the curvature of the steepest descent $h$, upon setting $\phi:=\frac{h}{|\nabla u|}$ for $h$ as in~\eqref{h-eq} and repeating the above computations with~\eqref{pde1-star} applied instead of~\eqref{pde1}. 
 
% \komG{I think that the last part (from here on) can be removed. If we deduce the second inequality from the first one using Bochner and Kato, the computations are exactly the same as for $k$.} \komT{Since now I added the sketch of the proof of (27) for $h/|\nabla u|$ the discussion can be shorten as stated above.}

\end{document}